\newtheorem{theorem}{Theorem}
\newtheorem{lemma}{Lemma}
\newtheorem{proposition}{Proposition}
\newtheorem{remark}{Remark}
\newtheorem{definition}{Definition}
\newcommand{\R}{\mathbb{R}}
\newcommand{\E}{\mathbb{E}}
\newcommand{\Var}{\mathrm{Var}}
\newcommand{\Cov}{\mathrm{Cov}}
\newcommand{\Corr}{\mathrm{Corr}}
\newcommand{\Tr}{\mathrm{tr}}
\newcommand{\tr}{\Tr}
\newcommand{\op}{\mathrm{op}}
\newcommand{\norm}[1]{\left\lVert #1\right\rVert}
\newcommand{\opnorm}[1]{\norm{#1}_{\mathrm{op}}}
\newcommand{\Fnorm}[1]{\norm{#1}_{F}}
\begin{document}

\begin{frontmatter}
\title{Local Asymptotic Normality for Mixed Fractional Brownian Motion under High-Frequency Observation}
\runtitle{LAN PROPERTY FOR MFBM UNDER HIGH-FREQUENCY OBSERVATION}

\begin{aug}
\author[A]{\fnms{Chunhao}~\snm{Cai}\ead[label=e1]{caichh9@mail.sysu.edu.cn}}
\and \author[B]{\fnms{Yiwu}~\snm{Shang}\ead[label=e2]{shanyw@mail.nankai.edu.cn}}
\address[A]{School of Mathematics (Zhuhai), Sun Yat-Sen University\printead[presep={,\ }]{e1}}
\address[B]{ School of mathematical sciences,Nankai University\printead[presep={,\ }]{e2}}
\end{aug}

\begin{abstract}
In this paper we will consider the LAN property for both the Hurst parameter $H>3/4$ and the variance of the fractional Brownian motion plus an independent standard Brownian motion (called mixed fractional Brownian motion) with high-frequency observation. We will first remove the $H$-score linear term and orthogonalize the remainder through two non-diagonal transformations, then we can construct the CLT for the quadratic form base on $\| \cdot \|_{\mathrm{op}}/\|\cdot\|_F\to0$. At last we obtain a diagonal Gaussian LAN expansion with an explicit information matrix.  Beyond the case of $H>3/4$, we also present  that the $\| \cdot \|_{\mathrm{op}}/\|\cdot\|_F\to0$ method is also useful for the case of  $H<3/4$ and the proof will be concise compared with the Whittle translation method. We consider that this method can be applied to this type of problem, including the fractional Ornstein-Uhlenbeck model and mixed fractional O-U process.
\end{abstract}

\begin{keyword}[class=MSC]
\kwd[62F05]{ }
\kwd{62F12}
\end{keyword}

\begin{keyword}
\kwd{Local asymptotic normality}
\kwd{mixed fractional Brownian motion}
\end{keyword}

\end{frontmatter}

\section{Introduction}\label{sec:intro}

We consider the mixed fractional Brownian motion (mfBm)
\begin{equation}\label{eq:model}
    Y_t=\sigma B_t^H+{B_t},\qquad t\ge 0,
\end{equation}
where $B^H$ is a standard fractional Brownian motion with Hurst index $H\in(3/4,1)$, {$B$} is an independent standard Brownian motion, and the parameter $\theta=(\sigma,H)$ is unknown.
We observe discrete high-frequency increments on a growing time horizon:
\[
\Delta_n=n^{-\alpha},\quad \alpha\in(0,1),\qquad \mathcal T_n:=n\Delta_n\longrightarrow\infty,
\]
\begin{equation}\label{eq:increments}
X_{n,i}:=Y_{i\Delta_n}-Y_{(i-1)\Delta_n}
=\sigma \Delta_n^H G_{n,i}+\Delta_n^{1/2}{W_{n,i}},\qquad i=1,\dots,n,  
\end{equation}
where $(G_{n,i})$ is fractional Gaussian noise (fGn) and {$(W_{n,i})$} is standard Gaussian white noise, independent of $(G_{n,i})$. In this paper we will construct the LAN property for the unknown parameter $\theta$ under this high-frequency observation. 

We now recall Le Cam's notion of local asymptotic normality (LAN), which provides a precise second-order approximation of the log-likelihood ratio under local perturbations and yields sharp minimax lower bounds. 

\begin{definition} [LAN]\label{def:LAN}
Let $\bigl(\mathbb P_\theta^n\bigr)_{\theta\in\Theta}$ be a sequence of statistical experiments indexed by $\theta\in\Theta\subset\mathbb R^k$.
We say it is \emph{locally asymptotically normal} at $\theta_0\in\Theta$ if there exist non-singular $k\times k$ matrices $\phi(n)=\phi(n,\theta_0)$ with $\|\phi(n)\|\to0$, random vectors $Z_{n,\theta_0}\in\mathbb R^k$, and a symmetric nondegenerate matrix $I(\theta_0)$ such that for every fixed $u\in\mathbb R^k$,
\begin{equation}
    \log\frac{d\mathbb P^n_{\theta_0+\phi(n)u}}{d\mathbb P^n_{\theta_0}}(X_n)
=
u^\top Z_{n,\theta_0}-\frac12 u^\top I(\theta_0)u+o_{\mathbb P_{\theta_0}}(1), \quad Z_{n,\theta_0}\Rightarrow \mathcal N\bigl(0,I(\theta_0)\bigr). \nonumber
\end{equation}
 
\end{definition}
Denote by $W^{2,k}$ the class of loss functions $\ell:\mathbb R^k\to[0,\infty)$ that are symmetric, nonnegative quasi-convex function and satisfy
\[
\lim_{\|u\|\to\infty}e^{-a\|u\|^2}\,\ell(u)=0,\qquad \forall a>0.
\]
The following is the classical local asymptotic minimax lower bound for LAN families.

\begin{theorem}\label{thm:LAN-minimax}
Assume $\bigl(\mathbb P_\theta^n\bigr)_{\theta\in\Theta}$ is LAN at $\theta_0$ with normalization matrices $\phi(n)$ and information $I(\theta_0)$.
Then for any sequence of estimators $\hat\theta_n$, any $\ell\in W^{2,k}$, and $\delta>0$,
\[
\liminf_{\delta\rightarrow\infty}\liminf_{n\to\infty}\;
\sup_{\|\theta-\theta_0\|<\delta}
\mathbb E_{\theta}\!\left[
\ell\!\left(\phi(n)^{-1}\bigl(\hat\theta_n-\theta\bigr)\right)
\right]
\ \ge\
\int_{\mathbb R^k}\ell(x)\,\varphi_{I(\theta_0)^{-1}}(x)\,dx,
\]
where $\varphi_{I(\theta_0)^{-1}}$ denotes the centered Gaussian density with covariance $I(\theta_0)^{-1}$ (restricted to the range of $I(\theta_0)$ when $I(\theta_0)$ is nonsingular).
\end{theorem}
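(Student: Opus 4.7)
The plan is the standard Hájek–Le Cam route for LAN families: replace the minimax quantity by a Bayes lower bound in the local parametrization, pass to the limit using the LAN expansion, and compute explicitly in the Gaussian shift limit experiment.

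First I would localize by setting $T_n := \phi(n)^{-1}(\hat\theta_n-\theta_0)$ and reparametrizing $\theta=\theta_0+\phi(n)u$, so that the loss becomes $\ell(T_n-u)$ while the constraint $\|\theta-\theta_0\|<\delta$ covers the $u$-ball $\|u\|<\delta/\|\phi(n)\|$, which swells to $\mathbb{R}^k$ as $n\to\infty$ since $\|\phi(n)\|\to 0$. Picking any probability density $\pi$ (later a rescaled Gaussian $\pi_\lambda$) eventually supported in that ball yields the Bayes lower bound
\begin{equation*}
\sup_{\|\theta-\theta_0\|<\delta}\mathbb{E}_\theta\bigl[\ell\bigl(\phi(n)^{-1}(\hat\theta_n-\theta)\bigr)\bigr]\ \ge\ \int \mathbb{E}_{\theta_0+\phi(n)u}\bigl[\ell(T_n-u)\bigr]\,\pi(u)\,du,
\end{equation*}
reducing the problem to bounding below the Bayes risk of $T_n$ under prior $\pi$.

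Second, I would rewrite this Bayes risk under the reference measure $\mathbb{P}^n_{\theta_0}$ via the likelihood ratio $\Lambda_{n,u}:=d\mathbb{P}^n_{\theta_0+\phi(n)u}/d\mathbb{P}^n_{\theta_0}$, which by LAN satisfies $\log\Lambda_{n,u}=u^\top Z_{n,\theta_0}-\tfrac12 u^\top I(\theta_0)u+o_{\mathbb{P}_{\theta_0}}(1)$ pointwise in $u$, with $Z_{n,\theta_0}\Rightarrow\mathcal{N}(0,I(\theta_0))$. If $T_n$ fails to be tight under $\mathbb{P}^n_{\theta_0}$, then by contiguity (Le Cam's first lemma) it also fails under each $\mathbb{P}^n_{\theta_0+\phi(n)u}$, the loss diverges, and the inequality is trivial. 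Otherwise, along a tight subsequence $(T_n,Z_{n,\theta_0})\Rightarrow(T,Z)$, and Fatou's lemma combined with the nonnegativity of $\ell$ yields
\begin{equation*}
\liminf_{n}\int\mathbb{E}_{\theta_0+\phi(n)u}\bigl[\ell(T_n-u)\bigr]\pi(u)\,du\ \ge\ \int\mathbb{E}\bigl[\ell(T-u)\,e^{u^\top Z-\tfrac12 u^\top I(\theta_0)u}\bigr]\pi(u)\,du,
\end{equation*}
which is the Bayes risk of the (possibly randomized) estimator $T$ in the Gaussian shift limit experiment where $Z\sim\mathcal{N}(I(\theta_0)u,I(\theta_0))$ is observed.

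Third, in that Gaussian shift, a standard application of Anderson's lemma for symmetric quasi-convex $\ell$ identifies the Bayes-optimal estimator of $u$ (as the prior becomes flat) with $\hat u=I(\theta_0)^{-1}Z\sim\mathcal{N}(u,I(\theta_0)^{-1})$, of risk $\int_{\mathbb{R}^k} \ell(x)\varphi_{I(\theta_0)^{-1}}(x)\,dx$. Choosing $\pi=\pi_\lambda$ a centered Gaussian (or a compactly supported approximation) with covariance $\lambda I_k$ and letting $\lambda\to\infty$ drives the Bayes risk of any estimator to at least this unconstrained minimum. Sending first $n\to\infty$, then $\lambda\to\infty$, and finally $\delta\to\infty$ (which only widens the admissible neighborhood and can only raise the sup) yields the claimed inequality.

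The main obstacle is verifying that the pointwise LAN expansion integrates cleanly in $u$ against $\pi_\lambda$ to produce the Gaussian-shift Bayes risk as the unambiguous limit. The growth bound $\ell(x)=o(e^{a\|x\|^2})$ built into $W^{2,k}$ is precisely what guarantees integrability in $u$ of $\ell(T_n-u)\Lambda_{n,u}\pi_\lambda(u)$, since $\Lambda_{n,u}$ has log-Gaussian tails of order $\exp(O(\|u\|))$ when averaged against Gaussian priors; dominated convergence then licenses the passage of $\liminf_n$ inside the integral in $u$. This step is developed in full in standard references (van der Vaart, \emph{Asymptotic Statistics}, Ch.~8; Ibragimov–Has'minskii, \emph{Statistical Estimation}, Ch.~II), from which the present statement is an immediate corollary.
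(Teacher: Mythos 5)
The paper does not actually prove this theorem: it is stated as the classical H\'ajek--Le Cam local asymptotic minimax bound and used as a black box, so there is no in-paper argument to compare against. Your sketch follows the standard and correct route (localization, Bayes lower bound against a widening prior, passage to the Gaussian shift limit experiment via contiguity and the LAN expansion, Anderson's lemma, then flattening the prior), and you correctly observe that the paper's formulation with $\liminf_{\delta\to\infty}$ of a supremum over $\|\theta-\theta_0\|<\delta$ is only weaker than the usual shrinking-neighborhood version, since enlarging $\delta$ can only increase the supremum. Two technical points you gloss over but should make explicit in a full write-up: (i) the interchange of $\liminf_n$ with the $u$-integral is more safely done by first truncating the loss to $\ell\wedge M$, so that the portmanteau theorem applies to a bounded lower semicontinuous integrand under the joint weak convergence of $(T_n,Z_{n,\theta_0},\log\Lambda_{n,u})$, and then letting $M\to\infty$ by monotone convergence --- a bare Fatou argument needs lower semicontinuity of $\ell$, which quasi-convexity alone does not supply pointwise everywhere; (ii) the uniform integrability in $u$ against $\pi_\lambda$ is precisely where the growth condition defining $W^{2,k}$ enters, so the domination should be written out. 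Neither is a gap in substance; both are carried out verbatim in Ibragimov--Has'minskii, Ch.~II, Thm.~12.1, of which the present statement is a restatement.
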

In a continuous-noise setting Kleptsyna and Chigansky \cite{KleptsynaChigansky}  have obtained LAN-type statements under continuous observation. A natural question is whether one can simply ``transfer'' continuous-observation LAN results by replacing the continuous time horizon with the discrete one $\mathcal T_n$.

However, in the present infill scheme \eqref{eq:increments}, the observation is inherently discrete and contaminated by the Brownian component at the increment level: the small-scale behavior is dominated by the $\Delta_n^{1/2}$-term, while the long-memory component enters through a vanishing regularization factor (in our notation later $\gamma_n=\sigma^2\Delta_n^{2H-1}\to0$).
This mismatch changes both the effective Fisher information scaling and the form of the local reparametrization.

In \cite{CaiWhittle25}, the author has considered the same model when the  
long-memory singularity is mild (notably $H<3/4$), Whittle-type approximations can be used to translate Toeplitz likelihoods into spectral integrals and to establish LAN in a comparatively direct way. However, the regime $H>3/4$ corresponds to a stronger low-frequency singularity: the relevant trace functionals are driven by the near-zero behavior of the spectral density of fGn $f_H$ and its $H$-derivative and require refined trace asymptotics for \emph{triangular-array} (i.e.\ $n$-dependent) regularization.  

The most delicate phenomenon in the regime $H>3/4$ is that the (properly normalized) score vector exhibits an additional asymptotic co-linearity (see the Appendix) which is not present in the pure fGn model \cite{BrousteFukasawa18}. 
This degeneracy arises from a specific interplay between the vanishing signal-to-noise ratio and the severity of the spectral singularity. 
Unlike pure fGn, where self-similarity preserves the signal structure across scales, the mfBm signal is asymptotically buried under white noise because the fractional variance $\Delta_n^{2H}$ decays faster than the white noise variance $\Delta_n$ whenever $H>1/2$. 
However, noise dominance alone does not imply degeneracy; in the intermediate regime $1/2 < H < 3/4$, where the noise also dominates, the LAN property can be established without projection because the spectral density is square-integrable (the memory parameter $p=2H-1 < 1/2$). 
In stark contrast, when $H > 3/4$, we have $p > 1/2$, rendering the squared spectral density non-integrable at the origin. 
This strong singularity concentrates the Fisher information so heavily at zero frequency that the spectral ``shape'' variations (governed by $H$) become asymptotically indistinguishable from the ``energy'' scaling (governed by $\sigma$). 
Consequently, the standard score vectors collapse into a rank-1 subspace, necessitating our projection step to recover the orthogonal information.

The rest of the paper will be organized as follows: in Section \ref{sec: Main}
we will introduce the main results of the LAN property. Section \ref{sec: Pre} to \ref{sec:LAN} are contributed to the proof of our theorem. Then we will the simulated results in Section \ref{sec:simulation}. At last in the Appendix, some explicit formula in the Fisher information will be introduced and we will also explain why we should take two non-diagonal rate matrix for this case. Also in the Appendix we extend our $\| \cdot \|_{\mathrm{op}}/\|\cdot\|_F\to0$ method to the case $H<3/4$, this method will be more concise than the Whittle translation method presented in \cite{CaiWhittle25}. 

\section{Main Results} \label{sec: Main}
For any integrable real-valued symbol $u$ on $[-\pi,\pi]$, we define $T_n(u)$ be the $n\times n$ Toeplitz matrix with entries
\[
(T_n(u))_{jk}:=\hat u_{j-k},\qquad \hat u_k:=\frac{1}{2\pi}\int_{-\pi}^{\pi} u(\lambda)e^{-ik\lambda}\,d\lambda,\quad k\in\mathbb Z.
\]
In particular, $T_n(H)=T_n(f_H)$, where $f_H$ is the spectral density of the standard fGn under the Fourier inversion
$$
\rho_H(k)=\frac{1}{2\pi}\int_{-\pi}^{\pi} e^{ik\lambda}f_H(\lambda)\,d\lambda.
$$

Also as we know $f_H(\lambda)$ has the low-frequency asymptotic
\begin{equation}\label{eq:fH-asymp}
    f_H(\lambda)\sim c_H|\lambda|^{-(2H-1)}\qquad(\lambda\to0),
\end{equation}
for some constant $c_H>0$ depending only on $H$. We define the signal-to-noise ratio parameter
\begin{equation}\label{eq:gamma-def}
    \gamma_n=\sigma^2\Delta_n^{2H-1},
\end{equation}
then our observation ${X_n}=(X_{n,1},\dots,X_{n,n})^\top$ is a centered Gaussian vector with covariance
\begin{equation}\label{eq:Vn-def}
   V_{n}:= V_n(\theta)=\Delta_n\big(I_n+\gamma_n T_n(H)\big),\qquad \theta=(\sigma,H),
\end{equation}
where $I_n$ is the $n\times n$ identity matrix and $T_n(H)$ is the Toeplitz covariance matrix of the standard fractional Gaussian noise (fGn) with Hurst index $H\in(1/2,1)$.
We consider the increasing-horizon high-frequency sampling scheme
\begin{equation}\label{eq:sampling-scheme}
    \Delta_n=n^{-\alpha},\qquad 0<\alpha<1,
    \qquad \mathcal T_n:=n\Delta_n=n^{1-\alpha}\to\infty.
\end{equation}

\subsection{Exact score functions}\label{sec:scores}
Let $\ell_n(\theta)$ be the Gaussian log-likelihood of $X_n\sim\mathcal N(0,V_n(\theta))$:
\[
\ell_n(\theta)=-\frac{n}{2}\ln(2\pi)-\frac12\ln\det V_n(\theta)-\frac12 X_n^\top V_n(\theta)^{-1}X_n,
\]
then 
\begin{proposition}\label{prop:exact-scores}
The exact score components are centered quadratic forms
\begin{align}
S_{\sigma,n}:=\partial_\sigma \ell_n
&=\frac{\gamma_n}{\sigma\Delta_n}\Big(X_n^\top\Psi_{\sigma,n}X_n-\E[X_n^\top\Psi_{\sigma,n}X_n]\Big),\label{eq:sigma-score}\\
S_{H,n}:=\partial_H \ell_n
&=\frac{\gamma_n}{2\Delta_n}\Big(X_n^\top\Psi_{H,n}X_n-\E[X_n^\top\Psi_{H,n}X_n]\Big),\label{eq:H-score}
\end{align}
where
\begin{align*}
\Psi_{\sigma,n}&=(I_n+\gamma_nT_n(H))^{-1}\,T_n(H)\,(I_n+\gamma_nT_n(H))^{-1},\\
\Psi_{H,n}&=(I_n+\gamma_nT_n(H))^{-1}\,\big(2\ln(\Delta_n)\,T_n(H)+\dot T_n(H)\big)\,(I_n+\gamma_nT_n(H))^{-1},
\end{align*}
and $\dot T_n(H):=\partial_H T_n(H)$.
\end{proposition}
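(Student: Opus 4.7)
The plan is to compute both score components directly by matrix calculus applied to the Gaussian log-likelihood. Using the Jacobi formula $\partial_\theta \ln\det V_n = \tr\bigl(V_n^{-1}\partial_\theta V_n\bigr)$ together with $\partial_\theta V_n^{-1} = -V_n^{-1}(\partial_\theta V_n)V_n^{-1}$, for any component $\theta_j$ of $\theta = (\sigma,H)$ the score takes the generic form
\begin{equation*}
\partial_{\theta_j}\ell_n \;=\; -\tfrac12\tr\bigl(V_n^{-1}\partial_{\theta_j}V_n\bigr) + \tfrac12\,X_n^\top \Xi_j\, X_n, \qquad \Xi_j := V_n^{-1}(\partial_{\theta_j}V_n)V_n^{-1}.
\end{equation*}
Since $X_n \sim \mathcal N(0,V_n)$, the identity $\E[X_n^\top \Xi_j X_n] = \tr(\Xi_j V_n) = \tr\bigl(V_n^{-1}\partial_{\theta_j}V_n\bigr)$ shows that the deterministic $-\tfrac12\tr$ term is exactly $-\tfrac12\,\E[X_n^\top \Xi_j X_n]$. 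Hence each score is automatically a centered quadratic form $\tfrac12\bigl(X_n^\top \Xi_j X_n - \E[X_n^\top \Xi_j X_n]\bigr)$, and the only remaining task is to identify the kernel $\Xi_j$ with the claimed $\Psi_{j,n}$ up to the advertised scalar.

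It then suffices to unpack $\Xi_\sigma$ and $\Xi_H$. From $V_n = \Delta_n(I_n + \gamma_n T_n(H))$ with $\gamma_n = \sigma^2\Delta_n^{2H-1}$, I would first use $\partial_\sigma \gamma_n = 2\gamma_n/\sigma$ to get $\partial_\sigma V_n = (2\Delta_n\gamma_n/\sigma)T_n(H)$, and then the chain rule $\partial_H \gamma_n = 2\ln(\Delta_n)\gamma_n$ together with the product rule to get $\partial_H V_n = \Delta_n\gamma_n\bigl(2\ln(\Delta_n)T_n(H) + \dot T_n(H)\bigr)$. Factoring $V_n^{-1} = \Delta_n^{-1}(I_n + \gamma_n T_n(H))^{-1}$ on both sides of the sandwich defining $\Xi_j$ yields $\Xi_\sigma = (2\gamma_n/(\sigma\Delta_n))\,\Psi_{\sigma,n}$ and $\Xi_H = (\gamma_n/\Delta_n)\,\Psi_{H,n}$, which, after absorbing the outer factor of $\tfrac12$, reproduces \eqref{eq:sigma-score}--\eqref{eq:H-score} verbatim.

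The argument is entirely routine and presents no genuine obstacle; the statement is essentially a bookkeeping identity. The only point worth flagging is the appearance of $\ln(\Delta_n)$ inside $\Psi_{H,n}$: it comes solely from $\partial_H \Delta_n^{2H-1} = 2\ln(\Delta_n)\Delta_n^{2H-1}$, and although algebraically harmless at this stage, it is precisely this logarithmic contribution that later drives the asymptotic co-linearity between $S_{\sigma,n}$ and $S_{H,n}$ flagged in the introduction, and hence motivates the non-diagonal reparametrization used in subsequent sections.
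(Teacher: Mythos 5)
Your proposal is correct and follows essentially the same route as the paper: both use the standard Gaussian score identity $\partial_\theta\ell_n=-\tfrac12\Tr(V_n^{-1}\partial_\theta V_n)+\tfrac12 X_n^\top V_n^{-1}(\partial_\theta V_n)V_n^{-1}X_n$, the identity $\E[X_n^\top AX_n]=\Tr(AV_n)$ to center the quadratic form, and the same derivatives $\partial_\sigma V_n=\tfrac{2\gamma_n\Delta_n}{\sigma}T_n$ and $\partial_H V_n=\Delta_n\gamma_n(2\ln(\Delta_n)T_n+\dot T_n)$ to identify the sandwich kernels. The only difference is that you spell out the Jacobi formula and the scalar bookkeeping more explicitly, which is fine.
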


\begin{proof}
This is the standard Gaussian score identity:
\[
\partial_\theta \ell_n
=-\frac12\Tr(V_n^{-1}\partial_\theta V_n)+\frac12 X_n^\top V_n^{-1}(\partial_\theta V_n)V_n^{-1}X_n,
\]
and $\E[X_n^\top AX_n]=\Tr(AV_n)$. Using $V_n=\Delta_n(I_n+\gamma_nT_n)$ together with
$\partial_\sigma V_n=\frac{2\gamma_n\Delta_n}{\sigma}T_n$ and
$\partial_H V_n=\Delta_n\gamma_n(2\ln(\Delta_n)T_n+\dot T_n)$ yields the stated sandwich forms.
\end{proof}

\begin{theorem}\label{thm:LAN-rank2}
Let the rate matrix $M_n=M_n^{(2)}M_n^{(1)}$ defined in \eqref{eq: rate matrix}, the sequence of statistical experiments generated by the observations $X_{n}$ is Locally Asymptotically Normal at $\theta=(\sigma, H)$ for $\sigma>0,\, 3/4<H<1$. Specially for any fixed $h\in \mathbb{R}^2$, the log-likelihood ratio satisfies:
$$
\log \frac{d\mathbb{P}^n_{\theta+hr_n^{-1}}}{d\mathbb{P}_{\theta}^n}=h^\top \Xi_n-\frac12 h^\top I^\perp h+o_{\mathbb P^{n}_{\theta}}(1),
$$
where 
$$
r_n:=\sqrt{\mathcal T_n}\,(M_n^{-1})^{\!\top},
\qquad\text{equivalently}\qquad
r_n^{-\,\top}=\frac{1}{\sqrt{\mathcal T_n}}\,M_n.
$$
The random vector $\Xi_n$ converges in law to
$\mathcal N(0,I^\perp)$ and
\[
I^\perp=
\begin{pmatrix}
\frac{\sigma^2}{\pi}J_0(H,\sigma) & 0\\[2mm]
0 & \frac{\sigma^4}{4\pi}J_\perp(H,\sigma)
\end{pmatrix},
\quad
J_\perp=J_2-\frac{J_1^2}{J_0},
\]
with $p=2H-1$ and  $J_{0},J_1,J_2$ defined in \eqref{eq:J0}--\eqref{eq:J2}.
\end{theorem}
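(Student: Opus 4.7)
The plan is to perform a second-order Taylor expansion of the Gaussian log-likelihood ratio, use the two-step non-diagonal matrix $M_n = M_n^{(2)} M_n^{(1)}$ to simultaneously remove the asymptotic collinearity of the raw scores and rescale to the correct rate, and then invoke a CLT for the resulting centered quadratic forms via the operator-to-Frobenius criterion $\|\cdot\|_{\mathrm{op}}/\|\cdot\|_F\to 0$ advertised in the introduction. Writing $S_n=(S_{\sigma,n},S_{H,n})^\top$ from Proposition~\ref{prop:exact-scores} and $\phi_n := r_n^{-\top} = M_n/\sqrt{\mathcal T_n}$, the standard Gaussian expansion gives
\[
\log\frac{d\mathbb{P}^n_{\theta+\phi_n h}}{d\mathbb{P}^n_\theta}
= h^\top \phi_n^\top S_n \;-\; \tfrac12\, h^\top \phi_n^\top I_n(\theta)\,\phi_n h \;+\; R_n(h),
\]
where $I_n(\theta) = \mathbb{E}[S_n S_n^\top]$ is the exact Fisher information and $R_n$ collects third-order terms. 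The LAN conclusion therefore reduces to: (i) $M_n^\top S_n/\sqrt{\mathcal T_n}\Rightarrow \mathcal N(0,I^\perp)$; (ii) $M_n^\top I_n(\theta) M_n/\mathcal T_n \to I^\perp$; and (iii) $R_n(h) = o_{\mathbb P_\theta^n}(1)$ uniformly for $h$ in a compact set.

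The asymptotic collinearity noted in the introduction is the reason a diagonal rate cannot work: since $\Psi_{H,n}$ contains the explicit scalar $2\ln\Delta_n$ multiplying $T_n(H)$, the raw score $S_{H,n}$ is to leading order proportional to $S_{\sigma,n}$ when $H>3/4$. I would therefore take $M_n^{(1)}$ lower-triangular, essentially of the form $I_2 - c_n\, e_2 e_1^\top$, with $c_n$ matched to the leading trace coefficients so that the second component of $(M_n^{(1)})^\top S_n$ becomes the $L^2(V_n^{-1})$-orthogonal residual of $\partial_H V_n$ against $\partial_\sigma V_n$, i.e.\ the informationally projected $H$-score. Then $M_n^{(2)}$ is a diagonal rescaling placing the two coordinates at the $\sqrt{\mathcal T_n}$-scale, with target variances $\tfrac{\sigma^2}{\pi}J_0$ and $\tfrac{\sigma^4}{4\pi}J_\perp$. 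The Schur-complement shape $J_\perp = J_2 - J_1^2/J_0$ then appears by construction, being precisely the Fisher information left after projecting out the $\sigma$-direction.

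Item (i) is the main step. Each coordinate of $M_n^\top S_n$ belongs to the second Wiener chaos of the Gaussian $X_n$, so by a Cram\'er--Wold argument the joint CLT follows once, for every linear combination $\alpha e_1 + \beta e_2$, the associated symmetric kernel $A_n = A_n(\alpha,\beta,\theta)$ satisfies
\[
\frac{\|V_n^{1/2} A_n V_n^{1/2}\|_{\mathrm{op}}}{\|V_n^{1/2} A_n V_n^{1/2}\|_F}\longrightarrow 0.
\]
The main obstacle is precisely here: for $H>3/4$ one has $2H-1>1/2$, hence $f_H^2$ is not integrable near $0$; Frobenius norms of the untransformed kernels $\Psi_{\sigma,n},\Psi_{H,n}$ thus pick up a logarithmic enhancement from the low-frequency singularity \eqref{eq:fH-asymp}, while their operator norms blow up polynomially with $n$. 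The orthogonalization performed by $M_n^{(1)}$ is essential because it cancels the dominant low-frequency contribution and exposes the genuinely subleading piece on which the op-to-Frobenius ratio does vanish. This is where the triangular-array trace asymptotics for Toeplitz sandwiches with the $n$-dependent regularizer $\gamma_n = \sigma^2\Delta_n^{2H-1}$ must be carried out sharply.

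Item (ii) then follows by evaluating the leading traces $\mathrm{tr}((\Psi_\sigma V_n)^2)$, $\mathrm{tr}(\Psi_\sigma V_n \Psi_H V_n)$, $\mathrm{tr}((\Psi_H V_n)^2)$ by the same expansion around $\lambda=0$ and identifying them with explicit multiples of $\mathcal T_n$ times $J_0,J_1,J_2$. Item (iii) is obtained by applying analogous trace bounds to $\partial^2_\theta V_n$ and $\partial^3_\theta V_n$, together with the smallness $\|\phi_n h\|\to 0$, which transfers the estimates from $\theta$ to $\theta+\phi_n h$ via a routine perturbation of $T_n(H)$. Combining (i)--(iii) delivers the claimed LAN expansion with the stated diagonal information matrix $I^\perp$.
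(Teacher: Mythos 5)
Your proposal follows essentially the same route as the paper: a second-order expansion of the Gaussian log-likelihood ratio, a lower-triangular transformation that first removes the explicit $\sigma\ln(\Delta_n)$ multiple of $S_{\sigma,n}$ from $S_{H,n}$ and then projects the remainder onto the orthogonal complement of $C_n$ (yielding the Schur complement $J_\perp=J_2-J_1^2/J_0$), a Cram\'er--Wold joint CLT for the resulting quadratic forms via the $\opnorm{\cdot}/\Fnorm{\cdot}\to0$ criterion, and trace asymptotics identifying $r_n^{-\top}\mathcal I_n(\theta)r_n^{-1}\to I^\perp$ together with remainder control. The only (cosmetic) discrepancy is your description of the factorization: in \eqref{eq: rate matrix} both $M_n^{(1)}$ and $M_n^{(2)}$ are unit lower-triangular shears (log-term removal and projection, respectively) while the $\sqrt{\mathcal T_n}$ rescaling lives in $r_n$ itself, whereas you fold the scaling into $M_n^{(2)}$ and merge the two shears into one — the composite transformation, and hence the argument, is unchanged.
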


\section{Preliminaries: Score Functions}\label{sec: Pre}
\subsection{Some General Properties of fGn and Score Function}
As presented in the introduction, we deal with the score function using the standard normal quadratic formula, so first, we will reduce the it to the standard one. We present 
\begin{lemma}\label{lem:explicit-linear}
Let $A_n:=I_n+\gamma_nT_n(H)$ and $X_n =\sqrt{\Delta_n}\,A_n^{1/2}Z_{n}$. {Then we have
$$Z_{n} \sim\mathcal N(0,I_n).$$}
The $\sigma$-score admits the exact representation
\begin{equation}\label{eq:Ssigma-quad}
S_{\sigma,n} =\frac{\gamma_n}{\sigma}\Big({Z_{n}}^\top C_n {Z_{n}}-\Tr(C_n)\Big),
\end{equation}
and the $H$-score satisfies the decomposition
\begin{equation}\label{eq:Hscore-decomp}
S_{H,n} =\sigma\ln(\Delta_n)\,S_{\sigma,n}+R_{H,n},
\end{equation}
where 
\begin{equation}\label{eq:RH-def}
R_{H,n}:=\frac{\gamma_n}{2}\Big(Z_{n}^\top D_n Z_{n}-\tr(D_n)\Big),
\end{equation}
with the matrices
$$
C_n:=A_n^{-1/2}T_n(H)A_n^{-1/2},
\qquad
D_n:=A_n^{-1/2}\dot T_n(H)A_n^{-1/2}.
$$
\end{lemma}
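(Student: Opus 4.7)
The plan is to take Proposition~\ref{prop:exact-scores} as the starting point and reduce its centered quadratic forms in $X_n$ to centered quadratic forms in a standard Gaussian vector $Z_n$, which is the canonical form needed for the CLT arguments in later sections. First I would verify that $Z_n := \Delta_n^{-1/2} A_n^{-1/2} X_n$ is distributed as $\mathcal N(0, I_n)$: since $X_n$ is centered Gaussian with covariance $V_n = \Delta_n A_n$ by \eqref{eq:Vn-def}, the linear map $\Delta_n^{-1/2} A_n^{-1/2}$ sends it to a centered Gaussian with covariance $\Delta_n^{-1} A_n^{-1/2} V_n A_n^{-1/2} = A_n^{-1/2} A_n A_n^{-1/2} = I_n$, using the symmetry of the positive definite square root and the fact that $A_n^{1/2}$ commutes with $A_n^{-1}$.

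Next I would substitute $X_n = \sqrt{\Delta_n} A_n^{1/2} Z_n$ into the $\sigma$-score identity \eqref{eq:sigma-score}. The key sandwich simplification is $A_n^{1/2} \Psi_{\sigma,n} A_n^{1/2} = A_n^{-1/2} T_n(H) A_n^{-1/2} = C_n$, which absorbs the outer inverses cleanly because $A_n^{1/2}$ commutes with $A_n^{-1}$. The substitution contributes a factor $\Delta_n$ that cancels the $\Delta_n^{-1}$ in the prefactor $\gamma_n/(\sigma \Delta_n)$, and since $\E[Z_n^\top M Z_n] = \Tr(M)$ for symmetric $M$ whenever $Z_n\sim \mathcal N(0,I_n)$, the centering passes through to give \eqref{eq:Ssigma-quad}. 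The identical substitution applied to \eqref{eq:H-score} splits additively along $2\ln(\Delta_n) T_n(H) + \dot T_n(H)$: the $T_n(H)$ piece inherits the same sandwich structure and, after combining the constants $\gamma_n/(2\Delta_n)$, $2\ln(\Delta_n)$ and $\Delta_n$, reproduces exactly $\sigma \ln(\Delta_n) \cdot S_{\sigma,n}$, while the $\dot T_n(H)$ piece collapses to $R_{H,n}$ of \eqref{eq:RH-def}.

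There is no genuine analytic obstacle in this lemma; it is algebraic bookkeeping, and the only care needed is cyclic invariance of the trace and the commutativity of $A_n$ with its powers. What it \emph{accomplishes}, however, is the crucial point: it isolates the divergent prefactor $\sigma \ln(\Delta_n)$ (with $\ln(\Delta_n) = -\alpha \ln n \to -\infty$) that multiplies $S_{\sigma,n}$ inside $S_{H,n}$, which is the finite-sample source of the asymptotic co-linearity between the two score components flagged in the introduction. The explicit decomposition $S_{H,n} = \sigma \ln(\Delta_n) S_{\sigma,n} + R_{H,n}$ is precisely what sets up the projection-based orthogonalization needed in the sequel to extract a non-degenerate information matrix for $(\sigma, H)$.
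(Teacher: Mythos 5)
Your proposal is correct and follows essentially the same route as the paper: whiten via $Z_n=\Delta_n^{-1/2}A_n^{-1/2}X_n$, substitute into the sandwich forms of Proposition~\ref{prop:exact-scores} so that $A_n^{1/2}\Psi_{\sigma,n}A_n^{1/2}=C_n$ (and analogously for the $H$-score), and split the $H$-weighting matrix along $2\ln(\Delta_n)T_n(H)+\dot T_n(H)$ to read off \eqref{eq:Hscore-decomp}--\eqref{eq:RH-def}. The algebra checks out, including the cancellation of the $\Delta_n$ factors and the identification of the $\sigma\ln(\Delta_n)S_{\sigma,n}$ term.
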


\begin{proof}
Since $X_n \sim \mathcal{N}(0, V_n(\theta))$ and 
\[
Z_n = \frac{1}{\sqrt{\Delta_n}} A_n^{-1/2} X_n,
\]
where $A_n = I_n + \gamma_n T_n(H)$, then $Z_n \sim \mathcal{N}(0, I_n)$.
By the exact score representation, the weighting matrix for the $H$-score corresponds to
$2\ln(\Delta_n)\,T_n(H)+\dot T_n(H)$ sandwiched by $A_n^{-1/2}$.
Hence, the centered quadratic form splits into the sum of the $2\ln(\Delta_n)$-part and the $\dot T_n(H)$-part.
Using the identity $\partial_H \gamma_n = 2\gamma_n \ln(\Delta_n)$ and the scaling relation $S_{\sigma,n} = \frac{\gamma_n}{\sigma} Q_n(C_n)$, we obtain the decomposition \eqref{eq:Hscore-decomp} with $R_{H,n}$ as defined in \eqref{eq:RH-def}.
\end{proof}

We present in the following lemma our important procedure to obtain the central limit theorem for the standard quadratic formula, in this lemma the assumption $\opnorm{\cdot}/\Fnorm{\cdot}\to0$ play important role: 
 \begin{lemma}\label{lem:QF-CLT} Let $Z_n\sim\mathcal N(0,I_n)$ and let $M_n$ be a sequence of real symmetric matrices.
Set $Q_n(M_n):=Z_n^\top M_n Z_n-\Tr(M_n)$. If
\begin{equation}\label{eq:opF-cond}
\frac{\opnorm{M_n}}{\Fnorm{M_n}}\longrightarrow 0,
\end{equation}
where $\opnorm{\cdot}$ is the operator norm (or spectral norm) and $\Fnorm{\cdot}$ is the Hilbert–Schmidt (or Frobenius) norm, then
\begin{equation}\label{eq:QF-CLT}
\frac{Q_n(M_n)}{\sqrt{2}\,\Fnorm{M_n}}\xRightarrow[n\to\infty]{} \mathcal N(0,1).
\end{equation}
where {$\xRightarrow[]{}$} denotes convergence in distribution.
\end{lemma}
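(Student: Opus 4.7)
The plan is to diagonalize $M_n$ and reduce the quadratic form to a sum of independent centered chi-square variables, then apply the classical Lyapunov central limit theorem. Since $M_n$ is real symmetric, I write $M_n = U_n \Lambda_n U_n^{\top}$ with $\Lambda_n = \mathrm{diag}(\lambda_{n,1},\dots,\lambda_{n,n})$ and $U_n$ orthogonal. By rotational invariance of the standard Gaussian, $\tilde Z_n := U_n^{\top} Z_n$ is again $\mathcal N(0,I_n)$, so
\begin{equation*}
Q_n(M_n) \;=\; \sum_{i=1}^n \lambda_{n,i}\bigl(\tilde Z_{n,i}^2 - 1\bigr),
\end{equation*}
a sum of independent centered random variables whose total variance is $2\sum_i \lambda_{n,i}^2 = 2\Fnorm{M_n}^2$, since $\Var(\xi^2-1)=2$ for $\xi\sim\mathcal N(0,1)$.

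Next I would verify the Lyapunov condition at order three. Using $\E|\xi^2-1|^3 \le c < \infty$ together with the elementary bound
\begin{equation*}
\sum_{i=1}^n |\lambda_{n,i}|^3 \;\le\; \Bigl(\max_i |\lambda_{n,i}|\Bigr) \sum_{i=1}^n \lambda_{n,i}^2 \;=\; \opnorm{M_n}\,\Fnorm{M_n}^2,
\end{equation*}
the Lyapunov ratio is controlled by
\begin{equation*}
\frac{\sum_i |\lambda_{n,i}|^3\,\E|\xi^2-1|^3}{\bigl(2\Fnorm{M_n}^2\bigr)^{3/2}} \;\le\; \frac{c\,\opnorm{M_n}\Fnorm{M_n}^2}{2^{3/2}\,\Fnorm{M_n}^3} \;=\; O\!\Bigl(\frac{\opnorm{M_n}}{\Fnorm{M_n}}\Bigr),
\end{equation*}
which tends to zero by hypothesis \eqref{eq:opF-cond}. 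Lyapunov's CLT then delivers the claimed convergence $Q_n(M_n)/(\sqrt{2}\,\Fnorm{M_n}) \Rightarrow \mathcal N(0,1)$.

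I do not expect a genuine obstacle here; once the spectral reduction is in place the argument is scalar. The only conceptual point worth emphasizing is that the hypothesis $\opnorm{M_n}/\Fnorm{M_n}\to 0$ is precisely the spectral non-concentration condition ruling out any single eigendirection from carrying a non-negligible share of the variance, which is what prevents the limit from being a weighted chi-square. An equally short alternative route would be via cumulants: the formula $\kappa_k\bigl(Q_n(M_n)\bigr) = 2^{k-1}(k-1)!\,\Tr(M_n^k)$ combined with $|\Tr(M_n^k)|\le \opnorm{M_n}^{k-2}\Fnorm{M_n}^2$ for $k\ge 2$ shows that all standardized cumulants of order $\ge 3$ vanish under \eqref{eq:opF-cond}, which identifies the Gaussian limit directly without invoking Lyapunov.
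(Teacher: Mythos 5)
Your proof is correct and follows essentially the same route as the paper: diagonalize $M_n$, use rotational invariance to reduce $Q_n(M_n)$ to a weighted sum of independent centered $\chi^2_1$ variables, and verify the Lyapunov condition, with the hypothesis $\opnorm{M_n}/\Fnorm{M_n}\to0$ controlling the Lyapunov ratio. The only cosmetic difference is that you check Lyapunov at third order via $\sum_i|\lambda_{n,i}|^3\le\opnorm{M_n}\Fnorm{M_n}^2$ while the paper uses fourth moments; both work equally well.
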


\begin{proof}
Diagonalize $M_n=U_n^\top \Lambda_n U_n$ with $U_n$ orthogonal and $\Lambda_n=\mathrm{diag}(\lambda_{n,1},\dots,\lambda_{n,n})$.
Then $U_nZ_{n}\stackrel{d}{=}Z_{n}$ and
\[
Q_n(M_n)=\sum_{i=1}^n \lambda_{n,i}(Z_{n,i}^2-1),
\qquad
\Var(Q_n(M_n))=2\sum_{i=1}^n \lambda_{n,i}^2=2\Fnorm{M_n}^2.
\]
Let $\xi_{n,i}:=\lambda_{n,i}(Z_{n,i}^2-1)/(\sqrt{2}\Fnorm{M_n})$, so that $\sum_i\xi_{n,i}=\frac{Q_n(M_n)}{\sqrt{2}\Fnorm{M_n}}$
and the $\xi_{n,i}$ are independent and centered.
Since $Z_{n,i}^2-1$ has a finite fourth moment, there exists a constant $C$ such that $\E[(Z_{n,i}^2-1)^4] \le C$. Thus, we can verify the Lyapunov condition:
\[
\sum_{i=1}^n \E|\xi_{n,i}|^4
\le
C \cdot \frac{\sum_i \lambda_{n,i}^4}{\left(\sum_i\lambda_{n,i}^2\right)^2}
\le
C \left(\frac{\max_i|\lambda_{n,i}|}{\sqrt{\sum_i\lambda_{n,i}^2}}\right)^2
=
C \left(\frac{\opnorm{M_n}}{\Fnorm{M_n}}\right)^2
\to0,
\]
where we used $\sum_i\lambda_{n,i}^4\le(\max_i\lambda_{n,i}^2)\sum_i\lambda_{n,i}^2$.
Thus the Lindeberg--Feller CLT yields \eqref{eq:QF-CLT}.
A general treatment of CLTs for quadratic forms is in de~Jong~\cite{deJong87}.
\end{proof}
Because the Toeplitz covariance matrix can generated by the spectral density, we need to find the properties of the spectral density of the fractional Gaussnian noise and its derivative with respect to $H$. First is the Fisher–Hartwig representation: 
\begin{lemma}\label{lem:FH-fH}
Fix $H\in(3/4,1)$ and set $p:=2H-1\in(1/2,1)$.
Let $f_H$ be the spectral density of the \emph{standard} fractional Gaussian noise (fGn),
i.e. the stationary Gaussian sequence with autocovariance
$\rho_H(k)=\frac12\big(|k+1|^{2H}-2|k|^{2H}+|k-1|^{2H}\big)$.
Then $f_H$ admits the Fisher--Hartwig representation
\begin{equation}\label{eq:FH-repr-fH-lem}
f_H(\lambda)=|2\sin(\lambda/2)|^{-p}\, \ell_H(\lambda),\qquad \lambda\in[-\pi,\pi],
\end{equation}
where $\ell_H$ is even, $\ell_H\in C^2([-\pi,\pi])$, and
$0<\inf_{\lambda\in[-\pi,\pi]}\ell_H(\lambda)\le \sup_{\lambda\in[-\pi,\pi]}\ell_H(\lambda)<\infty$.
Moreover, the derivative in $H$ can be written as
\begin{equation}\label{eq:FH-repr-dfH-lem}
\dot f_H(\lambda)
=
f_H(\lambda)\,\bigl(-2\ln|2\sin(\lambda/2)|\bigr)
+
|2\sin(\lambda/2)|^{-p}\,\dot\ell_H(\lambda),
\end{equation}
with $\dot\ell_H$ even and $\dot\ell_H\in C^1([-\pi,\pi])$.
\end{lemma}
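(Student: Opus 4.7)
The plan is to read off the factorization directly from the Sinai aliasing identity
\[
f_H(\lambda)=C_H\,|1-e^{i\lambda}|^2\sum_{k\in\mathbb Z}|\lambda+2\pi k|^{-(2H+1)},\qquad C_H=\tfrac{\Gamma(2H+1)\sin(\pi H)}{2\pi},
\]
combined with $|1-e^{i\lambda}|^2=|2\sin(\lambda/2)|^2$ and the arithmetic $2+p=2H+1$. Defining
\[
\ell_H(\lambda):=|2\sin(\lambda/2)|^{p}\,f_H(\lambda)=C_H\,|2\sin(\lambda/2)|^{2H+1}\sum_{k\in\mathbb Z}|\lambda+2\pi k|^{-(2H+1)}
\]
realizes \eqref{eq:FH-repr-fH-lem} by construction, so only the qualitative claims on $\ell_H$ remain.

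I would verify them by splitting the series into the $k=0$ summand and the tail $|k|\ge 1$. The $k=0$ contribution equals $\bigl(2\sin(\lambda/2)/\lambda\bigr)^{2H+1}$: since $2\sin(\lambda/2)/\lambda$ is real-analytic, even, and strictly positive on $[-\pi,\pi]$ with value $1$ at the origin, raising it to a fixed real power yields a $C^\infty$, positive, even function. For $|k|\ge 1$ and $|\lambda|\le\pi$ one has $|\lambda+2\pi k|\ge\pi$, so each tail summand is $C^\infty$, and uniform convergence of the series together with its first two $\lambda$-derivatives follows by comparison with $\sum_{k\ge 1}k^{-(2H+1)}$ (convergent since $H>1/2$). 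Hence $\ell_H$ is even, $C^2$ (in fact $C^\infty$) on $[-\pi,\pi]$, and bounded above and below by positive constants.

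For \eqref{eq:FH-repr-dfH-lem} I would differentiate the product \eqref{eq:FH-repr-fH-lem} in $H$. Using $\partial_H p=2$ and $\partial_H t^{-p}=-2(\ln t)\,t^{-p}$, the first factor contributes $-2\ln|2\sin(\lambda/2)|\cdot|2\sin(\lambda/2)|^{-p}$, reproducing the $f_H\cdot(-2\ln|2\sin(\lambda/2)|)$ term once recombined with $\ell_H$. The remaining piece is $|2\sin(\lambda/2)|^{-p}\dot\ell_H$ by definition, and differentiating the explicit series for $\ell_H$ in $H$ inserts bounded logarithms $\ln\bigl(2\sin(\lambda/2)/\lambda\bigr)$ and $\ln|\lambda+2\pi k|$ into the analytic $k=0$ factor and the smooth tail respectively, so $\dot\ell_H$ inherits evenness and $C^1$ regularity by a dominated-convergence argument.

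The one conceptually subtle step is the $C^2$-regularity of $\ell_H$ at $\lambda=0$. Neither $|2\sin(\lambda/2)|^{2H+1}$ nor $|\lambda|^{-(2H+1)}$ is smooth on its own because $2H+1$ is non-integer, but their product collapses to the analytic sinc-type factor above; this cancellation is the structural reason for choosing the exponent $p=2H-1$ and is really the only point in the argument that does any work beyond uniform-convergence bookkeeping.
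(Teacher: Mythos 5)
Your proof is correct and follows essentially the same route as the paper: both start from the explicit aliasing representation of the fGn spectral density, define $\ell_H(\lambda)=|2\sin(\lambda/2)|^{p}f_H(\lambda)$, isolate the $k=0$ term so that the non-smooth factors cancel into the analytic $\bigl(2\sin(\lambda/2)/\lambda\bigr)^{2H+1}$, control the tail by uniform convergence of the series and its derivatives, and obtain \eqref{eq:FH-repr-dfH-lem} by differentiating the product in $H$. No substantive differences to report.
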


\begin{proof}
A classical explicit representation of the spectral density of standard fGn is  (see, e.g., \cite{Dahlhaus89,Gray06})
\begin{equation}\label{eq:fGn-sdf-explicit}
f_H(\lambda)
=
c_H\,(1-\cos\lambda)\Big(|\lambda|^{-2H-1}+B(\lambda,H)\Big),\qquad \lambda\in[-\pi,\pi],
\end{equation}
where $c_H>0$ depends only on $H$, and
\begin{equation}\label{eq:fGn-B-def}
B(\lambda,H):=\sum_{j=1}^\infty\Big((2\pi j+\lambda)^{-2H-1}+(2\pi j-\lambda)^{-2H-1}\Big).
\end{equation}
The series in \eqref{eq:fGn-B-def} converges uniformly on $[-\pi,\pi]$, and so do its
$\lambda$-derivatives up to order $2$ and the $H$-derivative up to order $1$
(because $(2\pi j\pm\lambda)^{-2H-1}$ is $O(j^{-2H-1})$ and its $H$-derivative is
$O(j^{-2H-1}\log j)$, both summable for $H>0$). Hence $B(\cdot,H)\in C^2([-\pi,\pi])$
and $\partial_H B(\cdot,H)\in C^1([-\pi,\pi])$, and both are even in $\lambda$.

Using $1-\cos\lambda = 2\sin^2(\lambda/2)$ and $p=2H-1$, define
\[
\ell_H(\lambda):=|2\sin(\lambda/2)|^{p}\,f_H(\lambda).
\]
Plugging \eqref{eq:fGn-sdf-explicit} into this definition yields
\begin{align*}
    \ell_H(\lambda)
&=
c_H\,|2\sin(\lambda/2)|^{p+2}\Big(|\lambda|^{-2H-1}+B(\lambda,H)\Big)\\
&=
c_H\,|2\sin(\lambda/2)|^{2H+1}\Big(|\lambda|^{-2H-1}+B(\lambda,H)\Big).
\end{align*}

Write
\[
|2\sin(\lambda/2)|^{2H+1}|\lambda|^{-2H-1}
=\Big(\frac{|2\sin(\lambda/2)|}{|\lambda|}\Big)^{2H+1}.
\]
Since $\sin x / x$ is analytic near $0$, the function
$\lambda\mapsto \big(|2\sin(\lambda/2)|/|\lambda|\big)^{2H+1}$ extends to a $C^\infty$
even function on $[-\pi,\pi]$ (with value $1$ at $\lambda=0$).
The remaining term
$|2\sin(\lambda/2)|^{2H+1}B(\lambda,H)$ is $C^2$ on $[-\pi,\pi]$ because
$|2\sin(\lambda/2)|^{2H+1}$ is $C^\infty$ and $B(\cdot,H)\in C^2$.
Therefore $\ell_H\in C^2([-\pi,\pi])$ and is even.
Moreover $\ell_H(\lambda)>0$ for all $\lambda$ (since $c_H>0$, $1-\cos\lambda\ge 0$,
and $|\lambda|^{-2H-1}+B(\lambda,H)>0$), hence by continuity on the compact interval
$[-\pi,\pi]$ we have $0<\inf\ell_H\le \sup\ell_H<\infty$.
This proves \eqref{eq:FH-repr-fH-lem}.

Finally, differentiating \eqref{eq:FH-repr-fH-lem} with respect to $H$ and using
$\dot p=2$ gives
\begin{align*}
    \dot f_H(\lambda)
& =
\partial_H\!\big(|2\sin(\lambda/2)|^{-p}\big)\,\ell_H(\lambda)
+
|2\sin(\lambda/2)|^{-p}\,\dot\ell_H(\lambda)\\
&=
|2\sin(\lambda/2)|^{-p}\Big(-2\ln|2\sin(\lambda/2)|\,\ell_H(\lambda)+\dot\ell_H(\lambda)\Big),
\end{align*}

which is exactly \eqref{eq:FH-repr-dfH-lem}. The regularity $\dot\ell_H\in C^1$
and evenness follow from the above uniform convergence arguments for $\partial_H B$
and the smoothness of $\sin(\cdot)$.
\end{proof}
Now, we will propose a generalized Szego trace approximation for triangular arrays. 
\begin{proposition} \label{prop:GSzego-inv}
Let  
\[
a_n(\lambda) := 1 + \gamma_n f_H(\lambda), \qquad 
A_n := T_n(a_n),
\]  
where \(T_n(a_n)\) denotes the \(n\times n\) Toeplitz matrix generated by the symbol \(a_n\).  
Assume \(n\Delta_n \to \infty\) and let \(b, c \in \{ f_H,\; \dot{f}_H \}\).  
Define the logarithmic-index function  
\[
k(f_H) := 0, \qquad k(\dot{f}_H) := 1,
\]  
which counts the number of logarithmic Fisher–Hartwig factors appearing in the symbol (hence governs the possible $\log n$ loss in the remainder).  
Then
\begin{equation}\label{eq:GSzego-inv}
\Tr\!\big( T_n(b) \, A_n^{-1} \, T_n(c) \, A_n^{-1} \big)
= \frac{n}{2\pi} \int_{-\pi}^{\pi} \frac{b(\lambda)c(\lambda)}{a_n(\lambda)^2} \, d\lambda
\;+\; R_n(b,c),
\end{equation}
where the remainder satisfies
\begin{equation}\label{eq:GSzego-inv-rem}
R_n(b,c) = O\!\big( n \, \log^{\,k(b)+k(c)} n \big), \qquad n\to\infty.
\end{equation}
\end{proposition}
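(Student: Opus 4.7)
The plan is to realize $A_n^{-1}$ as $T_n(1/a_n)$ plus a Hankel-type correction, reduce the trace in \eqref{eq:GSzego-inv} to $\Tr(T_n(bc/a_n^2))$ (which produces exactly the announced leading integral), and then bound every Hankel remainder via the Fisher–Hartwig Fourier-coefficient decay of Lemma \ref{lem:FH-fH}. The starting point is the Widom product identity $T_n(u)T_n(v) = T_n(uv) - H_n(u)H_n(\tilde v)$, with $H_n$ the $n\times n$ Hankel matrix and $\tilde v(\lambda):=v(-\lambda)$, applied to $u=a_n,\,v=1/a_n$:
\begin{equation*}
T_n(a_n)T_n(1/a_n) = I_n - H_n(a_n)H_n(\widetilde{1/a_n}) \;\Longrightarrow\; A_n^{-1} = T_n(1/a_n) + A_n^{-1}H_n(a_n)H_n(\widetilde{1/a_n}).
\end{equation*}
Since $a_n\ge 1$, I have the uniform bounds $\opnorm{A_n^{-1}}\le 1$ and $\opnorm{T_n(1/a_n)}\le 1$, which will be used throughout.

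Substituting this expansion for both occurrences of $A_n^{-1}$ in the trace produces one ``Toeplitz-only'' term and three cross terms, each containing the pair $H_n(a_n)H_n(\widetilde{1/a_n})$. Iterating the Widom identity on the Toeplitz-only term, $T_n(b)T_n(1/a_n) = T_n(b/a_n) - H_n(b)H_n(\widetilde{1/a_n})$ (and similarly for $T_n(c)T_n(1/a_n)$), and applying it once more to merge the two products, I obtain
\begin{equation*}
\Tr\!\big(T_n(b)T_n(1/a_n)T_n(c)T_n(1/a_n)\big) = \Tr(T_n(bc/a_n^2)) + (\text{Hankel remainders}),
\end{equation*}
where the leading piece equals $n\,\widehat{(bc/a_n^2)}_0 = \frac{n}{2\pi}\int_{-\pi}^{\pi} b(\lambda)c(\lambda)/a_n(\lambda)^2\,d\lambda$, i.e.\ exactly the main term in \eqref{eq:GSzego-inv}.

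Every remainder is then a trace of a product of bounded operators and Hankel matrices with symbols in $\{a_n,\,1/a_n,\,b,\,c\}$. Using $|\Tr(XY)|\le \Fnorm{X}\Fnorm{Y}$ together with $\Fnorm{XY}\le\opnorm{X}\Fnorm{Y}$ and the uniform operator bounds above, each term is controlled by a product of Hankel Hilbert–Schmidt norms $\Fnorm{H_n(u)}\,\Fnorm{H_n(v)}$ (times absolute constants). I then invoke the Fisher–Hartwig representation \eqref{eq:FH-repr-fH-lem}–\eqref{eq:FH-repr-dfH-lem}: the Fourier coefficients satisfy $|\hat f_{H,k}| = O(|k|^{p-1})$ and, because of the extra $\ln|2\sin(\lambda/2)|$ factor, $|\hat{\dot f}_{H,k}| = O(|k|^{p-1}\log|k|)$, while $1/a_n$ is smooth on $[-\pi,\pi]$ and yields $\Fnorm{H_n(1/a_n)} = O(1)$. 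Each appearance of $\dot f_H$ in a Hankel slot contributes exactly one extra $\log n$ factor, which produces the announced exponent $k(b)+k(c)$ in \eqref{eq:GSzego-inv-rem}.

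The main obstacle is the triangular-array structure: the symbol $a_n$ depends on $n$ through the vanishing $\gamma_n$, and $H_n(a_n) = \gamma_n H_n(f_H)$ has Hilbert–Schmidt norm of order $\gamma_n n^p = \sigma^2 \mathcal T_n^{\,p}$, which diverges, so a naive Hankel bound on the cross terms is too coarse to yield the $O(n)$ target. The remedy is to exploit the compensation supplied by the surrounding $A_n^{-1}$ factors: whenever $H_n(a_n)$ appears, it is paired through $A_n^{-1}$ with a reciprocal weight so that the singular portion of $a_n$ near $\lambda=0$ is absorbed by $1/a_n$, and the effective Hankel entering the estimate is $H_n$ of the \emph{bounded} symbol $f_H/a_n$. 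Balancing these cancellations with the Fisher–Hartwig logarithmic bookkeeping is the most delicate step, and it is what ultimately produces the sharp exponent $k(b)+k(c)$ of the logarithm in \eqref{eq:GSzego-inv-rem}.
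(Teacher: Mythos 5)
Your Widom--Hankel strategy is genuinely different from the paper's treatment: the paper does not actually prove Proposition~\ref{prop:GSzego-inv}, but defers it to generalized Szeg\H{o}-type trace theorems of Dahlhaus and Istas--Lang, so a self-contained argument would be a real contribution. As written, however, your proof has a gap precisely at the step you yourself call ``the most delicate'': the cross terms. After substituting $A_n^{-1}=T_n(1/a_n)+A_n^{-1}H_n(a_n)H_n(\widetilde{1/a_n})+\cdots$, every cross term contains $H_n(a_n)=\gamma_nH_n(f_H)$ with $\Fnorm{H_n(a_n)}\asymp\gamma_n n^{p}$, and the chain $|\Tr(XY)|\le\Fnorm{X}\,\Fnorm{Y}$ with $\opnorm{T_n(f_H)A_n^{-1}}\le\gamma_n^{-1}$ and $\Fnorm{H_n(\widetilde{1/a_n})}=O(1)$ gives a bound of order $\gamma_n^{-1}n^{p}=\sigma^{-2}n^{(1+\alpha)p}$, which exceeds $n$ whenever $(1+\alpha)p>1$ (e.g.\ $H$ and $\alpha$ both near $1$). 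You acknowledge this and then assert a ``compensation'' by which $A_n^{-1}H_n(a_n)$ is effectively replaced by a Hankel matrix of the bounded symbol $f_H/a_n$; but no identity is given that converts a product of a Toeplitz inverse with a Hankel matrix into a Hankel matrix of a quotient symbol, and no such factorization holds in general. That cancellation \emph{is} the content of the proposition in the triangular-array setting, so asserting it is not a proof. Worse, even granting the heuristic, $\Fnorm{H_n(f_H/a_n)}^2\asymp\sum_{m\lesssim\gamma_n^{-1/p}}m^{2p-1}\asymp\Delta_n^{-2p}$, so two such factors produce a remainder of order $n^{2\alpha p}$ up to logarithms; this is $o\bigl(n\Delta_n^{1-2p}\bigr)$ and hence would suffice for Lemma~\ref{lem:TA}, but it is \emph{not} $O\bigl(n\log^{k(b)+k(c)}n\bigr)$ for all $\alpha\in(0,1)$, $p\in(1/2,1)$, so the proposition as stated would still not follow.

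Two secondary points. First, the finite-section Widom identity carries \emph{two} Hankel corrections, $T_n(u)T_n(v)=T_n(uv)-P_nH(u)H(\tilde v)P_n-W_nH(\tilde u)H(v)W_n$ with $W_n$ the flip operator; dropping the second term does not change the difficulty but must be repaired. Second, $1/a_n$ is \emph{not} smooth uniformly in $n$: it has an $n$-dependent cusp $\sim|\lambda|^{p}/(\gamma_nc_H)$ at the origin. The conclusion $\Fnorm{H_n(1/a_n)}=O(1)$ does hold, but it has to be derived from $H_n(1/a_n)=-\gamma_nH_n(f_H/a_n)$ together with the coefficient decay of $f_H/a_n$, not from smoothness; similarly the claimed decay $|\widehat{\dot f}_{H,k}|=O(|k|^{p-1}\log|k|)$ needs an argument from the Fisher--Hartwig representation rather than a bare assertion.
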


\begin{proof}
The symbols $f_H$ and $\dot f_H$ have a single Fisher--Hartwig singularity at the origin; in particular,
by Lemma~\ref{lem:FH-fH}, $f_H(\lambda)=|2\sin(\lambda/2)|^{-p}\ell_H(\lambda)$ with $\ell_H\in C^2$ bounded away from $0$ and $\infty$, and
$\dot f_H$ decomposes into a logarithmic Fisher--Hartwig part $f_H(\lambda)\,(-2\log|2\sin(\lambda/2)|)$ plus a smooth Fisher--Hartwig part
$|2\sin(\lambda/2)|^{-p}\dot\ell_H(\lambda)$. The regularization $a_n^{-1}=(1+\gamma_n f_H)^{-1}$ is $n$-dependent and introduces a cutoff at
frequency scale $\lambda\asymp\gamma_n^{1/p}$.

The trace approximation \eqref{eq:GSzego-inv}--\eqref{eq:GSzego-inv-rem} is a standard consequence of generalized Szeg\H{o}-type theorems for
long-memory Toeplitz matrices under triangular-array regularization (infill asymptotics). One can derive it, for instance, from the main trace
approximation results in Dahlhaus~\cite{Dahlhaus89}, which treat ratios of the form $b/(1+\gamma_n f)$ and allow logarithmic Fisher--Hartwig factors.
An eigenvalue-sum representation and the corresponding remainder control in the quadratic-variation regime are also discussed in Istas--Lang~\cite{IstasLang97}.
\end{proof}
We verify the hypotheses for fGn
\begin{lemma}\label{lem:verify-GSzego}
For $b,c\in\{f_H,\dot f_H\}$ the integrands $bc/a_n^2$ in \eqref{eq:GSzego-inv} belong to $L^1([-\pi,\pi])$, and near $\lambda=0$ one has the uniform bounds
\begin{equation}\label{eq:bc-bound}
\frac{|b(\lambda)c(\lambda)|}{a_n(\lambda)^2}
\le C\,\frac{f_H(\lambda)^2\bigl(1+|\log|\lambda||\bigr)^{k(b)+k(c)}}{\bigl(1+\gamma_n f_H(\lambda)\bigr)^2},
\qquad |\lambda|\le 1,
\end{equation}
for a constant $C$ independent of $n$.
\end{lemma}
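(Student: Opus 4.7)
The plan is to read off the claimed pointwise bound directly from the Fisher–Hartwig decomposition in Lemma \ref{lem:FH-fH}, and then establish $L^1$ integrability by separating the contributions near and away from the origin, using the regularization $(1+\gamma_n f_H)^{-2}$ to tame the non-integrable singularity of $f_H^2$.

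First I would record the pointwise estimates near $\lambda=0$. By \eqref{eq:FH-repr-fH-lem} and the fact that $\ell_H$ is continuous, even, and bounded away from $0$ and $\infty$, one has $c_1|\lambda|^{-p}\le f_H(\lambda)\le c_2|\lambda|^{-p}$ for $|\lambda|\le 1$. From \eqref{eq:FH-repr-dfH-lem}, using $\ln|2\sin(\lambda/2)|=\ln|\lambda|+O(1)$ and the boundedness of $\dot\ell_H$, I obtain
\[
|\dot f_H(\lambda)|\ \le\ C\,f_H(\lambda)\bigl(1+|\log|\lambda||\bigr),\qquad |\lambda|\le 1,
\]
while $f_H$ itself contributes no logarithmic factor. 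Hence for any $b,c\in\{f_H,\dot f_H\}$ the product obeys $|b(\lambda)c(\lambda)|\le C' f_H(\lambda)^2(1+|\log|\lambda||)^{k(b)+k(c)}$, which is exactly the exponent $k(b)+k(c)$ prescribed by the logarithmic index function. Dividing by $a_n(\lambda)^2=(1+\gamma_n f_H(\lambda))^2$ yields \eqref{eq:bc-bound} with a constant $C$ depending only on $H$.

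For the $L^1$ claim I would split $[-\pi,\pi]=\{|\lambda|\le 1\}\cup\{|\lambda|>1\}$. Away from the origin, $f_H$ and $\dot f_H$ are continuous (by the $C^2$, resp. $C^1$, regularity of $\ell_H$ and $\dot\ell_H$) and $a_n\ge 1$, so the integrand is bounded on the compact set $\{|\lambda|>1\}$. Near the origin I split further at the natural cutoff $|\lambda|\asymp\gamma_n^{1/p}$ where $\gamma_n f_H(\lambda)\asymp 1$. On $\{|\lambda|\le \gamma_n^{1/p}\}$ one has $\gamma_n f_H(\lambda)\gtrsim 1$, so $f_H^2/a_n^2\lesssim \gamma_n^{-2}$ and integrability is trivial. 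On $\{\gamma_n^{1/p}<|\lambda|\le 1\}$ the denominator is bounded below by $1$, and $f_H^2\asymp|\lambda|^{-2p}$ is integrable on this annulus for each fixed $n$ (since the lower endpoint is strictly positive); the logarithmic factor $(1+|\log|\lambda||)^{k(b)+k(c)}$ adds at most a power of $|\log\gamma_n|$, which is harmless for fixed $n$.

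The only mildly delicate point is bookkeeping the logarithmic index: one must check that replacing $|2\sin(\lambda/2)|$ by $|\lambda|$ and $\ell_H(\lambda)$ by a constant inside the logarithm contributes only $O(1)$ errors which can be absorbed into $C$, so that the exponent on the right-hand side of \eqref{eq:bc-bound} is exactly $k(b)+k(c)$ and not larger. Everything else is routine manipulation of the Fisher–Hartwig structure; there is no obstruction beyond this accounting, and no uniformity in $n$ is required since the lemma only claims $L^1$ membership for each fixed $n$ together with the uniform pointwise bound \eqref{eq:bc-bound}.
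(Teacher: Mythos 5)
Your proposal is correct and follows essentially the same route as the paper: both derive the pointwise bound \eqref{eq:bc-bound} from the Fisher--Hartwig representation in Lemma~\ref{lem:FH-fH} (using $|2\sin(\lambda/2)|\asymp|\lambda|$, the two-sided bounds on $\ell_H$, and $|\dot f_H|\le C f_H(1+|\log|\lambda||)$), and both obtain $L^1$ membership for fixed $n$ by splitting away from the origin and using the cutoff of $(1+\gamma_n f_H)^{-2}$ at scale $|\lambda|\asymp\gamma_n^{1/p}$. Your version merely spells out the inner/annulus decomposition that the paper asserts implicitly.
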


\begin{proof}
The integrability and the bound \eqref{eq:bc-bound} follow from Lemma~\ref{lem:FH-fH}.
Indeed, on $|\lambda|\le 1$ we have $|2\sin(\lambda/2)|\asymp|\lambda|$ and $\ell_H,\dot\ell_H$ are bounded, so
$f_H(\lambda)\asymp |\lambda|^{-p}$ and
\[
|\dot f_H(\lambda)|
\le f_H(\lambda)\,\bigl(2|\log|\lambda||+C_1\bigr)+C_2\,|\lambda|^{-p}
\le C\,f_H(\lambda)\bigl(1+|\log|\lambda||\bigr).
\]
This yields \eqref{eq:bc-bound}. Since $p=2H-1\in(1/2,1)$, the right-hand side of \eqref{eq:bc-bound} is integrable on $[-1,1]$ for each $n$:
the factor $(1+\gamma_n f_H)^{-2}$ regularizes the $|\lambda|^{-2p}$ singularity and effectively cuts it off at the scale
$|\lambda|\asymp \gamma_n^{1/p}$. On $\{1<|\lambda|\le\pi\}$ all symbols are bounded and $a_n(\lambda)\ge 1$, hence $bc/a_n^2\in L^1([-\pi,\pi])$.
\end{proof}
In the following lemma, we will give the trace approximation defined in Lemma \ref{lem:explicit-linear}. This approximation will be always used in the structure of 
$\opnorm{\cdot}/\Fnorm{\cdot}\to0$.
\begin{lemma}\label{lem:TA}
Let $\Delta_n=n^{-\alpha}$ with $\alpha\in(0,1)$, and define $\gamma_n:=\sigma^2\Delta_n^{p}$ with $p=2H-1$.
Then the following trace approximations hold:
\begin{align}
\Tr(C_n^2)
&=
\frac{n}{2\pi}\int_{-\pi}^{\pi} g_n(\lambda)^2\,d\lambda
+o\!\Bigl(n\Delta_n^{1-2p}\Bigr),\label{eq:TA-C2}\\
\Tr(C_nD_n)
&=
\frac{n}{2\pi}\int_{-\pi}^{\pi} g_n(\lambda)h_n(\lambda)\,d\lambda
+o\!\Bigl(n\Delta_n^{1-2p}\ln(1/\Delta_n)\Bigr),\label{eq:TA-CD}\\
\Tr(D_n^2)
&=
\frac{n}{2\pi}\int_{-\pi}^{\pi} h_n(\lambda)^2\,d\lambda
+o\!\Bigl(n\Delta_n^{1-2p}\ln^2(1/\Delta_n)\Bigr),\label{eq:TA-D2}
\end{align}
where 
$$
g_n(\lambda)=\frac{f_H(\lambda)}{a_n(\lambda)},\qquad h_n(\lambda)=\frac{\dot f_H(\lambda)}{a_n(\lambda)}.
$$
\end{lemma}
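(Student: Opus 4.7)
The strategy is to reduce each of the three traces to the exact form handled by Proposition \ref{prop:GSzego-inv} by cyclically combining the symmetric factors $A_n^{-1/2}$, and then to show that the resulting Szeg\H{o}-type remainders lie strictly inside the stated $o(\cdot)$ terms.

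First, using the identity $A_n^{-1/2}A_n^{-1/2}=A_n^{-1}$, the cyclic invariance of the trace, and $\dot T_n(H)=T_n(\dot f_H)$, I rewrite
\begin{align*}
\Tr(C_n^2)&=\Tr\!\big(T_n(f_H)\,A_n^{-1}\,T_n(f_H)\,A_n^{-1}\big),\\
\Tr(C_nD_n)&=\Tr\!\big(T_n(f_H)\,A_n^{-1}\,T_n(\dot f_H)\,A_n^{-1}\big),\\
\Tr(D_n^2)&=\Tr\!\big(T_n(\dot f_H)\,A_n^{-1}\,T_n(\dot f_H)\,A_n^{-1}\big).
\end{align*}
Each is of the form covered by \eqref{eq:GSzego-inv} with $b,c\in\{f_H,\dot f_H\}$, $A_n=T_n(a_n)$, $a_n=1+\gamma_nf_H$; the required hypotheses are supplied by Lemma \ref{lem:FH-fH} (Fisher--Hartwig structure of $f_H$ and $\dot f_H$) and Lemma \ref{lem:verify-GSzego} (uniform $L^1$-bound on $bc/a_n^2$).

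Next, applying Proposition \ref{prop:GSzego-inv} directly yields the claimed leading terms, because $bc/a_n^2$ equals $g_n^2$, $g_nh_n$, or $h_n^2$, matching the integrands in \eqref{eq:TA-C2}--\eqref{eq:TA-D2}. The corresponding Szeg\H{o} remainders are $O(n)$, $O(n\log n)$, $O(n\log^2 n)$ via \eqref{eq:GSzego-inv-rem} and the indices $k(f_H)=0$, $k(\dot f_H)=1$. It remains to absorb them into the stated $o(\cdot)$ terms: with $\Delta_n=n^{-\alpha}$ and $p=2H-1\in(1/2,1)$, one has $\Delta_n^{1-2p}=n^{\alpha(2p-1)}$ with $\alpha(2p-1)>0$, so $\Delta_n^{1-2p}\to\infty$ polynomially in $n$ and therefore dominates every fixed power of $\log n$. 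Since $\log(1/\Delta_n)=\alpha\log n$, this gives $O(n\log^{k}n)=o\!\big(n\Delta_n^{1-2p}\log^{k}(1/\Delta_n)\big)$ for $k=0,1,2$, which is exactly what is claimed.

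The main obstacle is conceptual rather than computational: the entire argument hinges on Proposition \ref{prop:GSzego-inv}, which must control a trace whose regularizing symbol $a_n=1+\gamma_nf_H$ is itself $n$-dependent and induces a sharp spectral cutoff at the scale $|\lambda|\asymp\gamma_n^{1/p}$. Once that proposition is granted, the present lemma reduces to the clean algebraic rewriting above and the polynomial-versus-logarithmic rate comparison.
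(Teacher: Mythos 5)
Your proposal is correct and follows essentially the same route as the paper: the same cyclic rewriting of $\Tr(C_n^2)$, $\Tr(C_nD_n)$, $\Tr(D_n^2)$ into the sandwich form $\Tr(T_n(b)A_n^{-1}T_n(c)A_n^{-1})$, the same invocation of Proposition \ref{prop:GSzego-inv} (with hypotheses checked via Lemmas \ref{lem:FH-fH} and \ref{lem:verify-GSzego}), and the same absorption of the $O(n\log^k n)$ remainders using $\Delta_n^{1-2p}=n^{\alpha(2p-1)}\to\infty$ and $\log(1/\Delta_n)=\alpha\log n$. No gaps.
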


\begin{proof}
Give a simple symbol $T_n(H)=T_n:=T_n(f_H)$ and $\dot T_n(H)=\dot T_n:=T_n(\dot f_H)$.
Set $a_n(\lambda):=1+\gamma_n f_H(\lambda)$ so that $A_n=I_n+\gamma_n T_n=T_n(a_n(\lambda))$.

Since $A_n=I_n+\gamma_nT_n$ is a polynomial in $T_n$, the matrices $A_n$ and $T_n$ commute, and so do $A_n^{-1}$ and $T_n$.
Using the cyclicity of the trace and the identities $C_n=A_n^{-1/2}T_nA_n^{-1/2}$ and $D_n=A_n^{-1/2}\dot T_nA_n^{-1/2}$, we obtain
\begin{align}
\Tr(C_n^2)
&=\Tr\!\big(A_n^{-1}T_nA_n^{-1}T_n\big)
=\Tr\!\big(T_n\,A_n^{-1}\,T_n\,A_n^{-1}\big),\label{eq:TA-rewrite-C2}\\
\Tr(C_nD_n)
&=\Tr\!\big(A_n^{-1}T_nA_n^{-1}\dot T_n\big)
=\Tr\!\big(T_n\,A_n^{-1}\,\dot T_n\,A_n^{-1}\big),\label{eq:TA-rewrite-CD}\\
\Tr(D_n^2)
&=\Tr\!\big(A_n^{-1}\dot T_nA_n^{-1}\dot T_n\big)
=\Tr\!\big(\dot T_n\,A_n^{-1}\,\dot T_n\,A_n^{-1}\big).\label{eq:TA-rewrite-D2}
\end{align}

For \eqref{eq:TA-rewrite-C2}--\eqref{eq:TA-rewrite-D2} we take Proposition~\ref{prop:GSzego-inv} and then obtain
\begin{align*}
\Tr(C_n^2)
&=
\frac{n}{2\pi}\int_{-\pi}^{\pi}\frac{f_H(\lambda)^2}{a_n(\lambda)^2}\,d\lambda
+O(n)
=
\frac{n}{2\pi}\int_{-\pi}^{\pi} g_n(\lambda)^2\,d\lambda + O(n),\\
\Tr(C_nD_n)
&=
\frac{n}{2\pi}\int_{-\pi}^{\pi}\frac{f_H(\lambda)\dot f_H(\lambda)}{a_n(\lambda)^2}\,d\lambda
+O(n\log n)
=
\frac{n}{2\pi}\int_{-\pi}^{\pi} g_n(\lambda)h_n(\lambda)\,d\lambda + O(n\log n),\\
\Tr(D_n^2)
&=
\frac{n}{2\pi}\int_{-\pi}^{\pi}\frac{\dot f_H(\lambda)^2}{a_n(\lambda)^2}\,d\lambda
+O(n\log^2 n)
=
\frac{n}{2\pi}\int_{-\pi}^{\pi} h_n(\lambda)^2\,d\lambda + O(n\log^2 n).
\end{align*}

Now when $H>3/4$ we have $p=2H-1>1/2$, hence $2p-1>0$ and
\[
\frac{n}{n\Delta_n^{1-2p}}=\Delta_n^{2p-1}\longrightarrow 0.
\]
Moreover $\log(1/\Delta_n)=\alpha\log n$, so for $m\in\{1,2\}$,
\[
\frac{n\log^m n}{n\Delta_n^{1-2p}\log^m(1/\Delta_n)}
=
\Delta_n^{2p-1}\cdot \frac{\log^m n}{(\alpha\log n)^m}
\longrightarrow 0
\]
which yields \eqref{eq:TA-C2}--\eqref{eq:TA-D2}.
\end{proof}

\subsection{Central Limit Theorem for $S_{\sigma,n}$}
In this subsection and next one we will try to use the $\opnorm{\cdot}/\Fnorm{\cdot}\to0$ to get the CLT of the  $S_{\sigma,n}$ and Remainder for $H$. First let us define the weight function on $\mathbb{R}$:
\begin{equation}\label{eq:weight-w}
w(x):=\left(\frac{c_H|x|^{-p}}{1+\sigma^2c_H|x|^{-p}}\right)^2.
\end{equation}
Since $p>1/2$, one has $w\in L^1(\mathbb{R})$ and also $w(x)\ln^k|x|\in L^1(\mathbb{R})$ for $k=1,2$.
Define the integral constants:
\begin{align}
J_0(H,\sigma)&:=\int_{\mathbb{R}} w(x)\,dx,\label{eq:J0}\\
J_1(H,\sigma)&:=\int_{\mathbb{R}} w(x)\Bigl(C_H-2\ln|x|\Bigr)\,dx,\label{eq:J1}\\
J_2(H,\sigma)&:=\int_{\mathbb{R}} w(x)\Bigl(C_H-2\ln|x|\Bigr)^2\,dx,\label{eq:J2}
\end{align}
where \(C_{H}\) is defined in equation \eqref{eq:lowfreq-df} and given by \(C_{H}=\partial_{H}\ln\left( c_{H}\right)\). Here the constant 
\[
c_{H}= \frac{\Gamma(2H+1)\sin(\pi H)}{2\pi}.
\]All three integrals are finite and $J_0>0$.

\begin{remark}
These integrals arise from the asymptotics of the trace approximations derived in Lemma~\ref{lem:TA} and will play a crucial role in the explicit formula for the Fisher Information matrix.
\end{remark}

By Lemma~\ref{lem:FH-fH} and the relation $|2\sin(\lambda/2)|\sim|\lambda|$ as $\lambda\to0$, the spectral density of standard fGn, $f_H(\lambda)$, admits the low-frequency expansions:
\begin{equation}\label{eq:lowfreq-f}
f_H(\lambda)=c_H|\lambda|^{-p}\bigl(1+r_H(\lambda)\bigr),\qquad r_H(\lambda)\to0 \ \ (\lambda\to0),
\end{equation}
and
\begin{equation}\label{eq:lowfreq-df}
\dot f_H(\lambda)=f_H(\lambda)\Bigl(C_H-2\ln|\lambda|+\rho_H(\lambda)\Bigr),\qquad \rho_H(\lambda)\to0 \ \ (\lambda\to0),
\end{equation}
for some constants $c_H>0$ and $C_H\in\mathbb{R}$ depending on $H$. We now establish the asymptotics for $\Tr(C_n^2)$.
\begin{lemma}\label{lem:trace-C2}
Under the expansion \eqref{eq:lowfreq-f} and the trace approximation \eqref{eq:TA-C2}, we have
\begin{equation}\label{eq:trC2-asymp}
\tr(C_n^2)
=
\frac{n}{2\pi}\Delta_n^{1-2p}\Bigl(J_0(H,\sigma)+o(1)\Bigr).
\end{equation}
\end{lemma}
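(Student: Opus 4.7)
My plan is to start from the trace approximation \eqref{eq:TA-C2} provided by Lemma \ref{lem:TA}, which already absorbs the Szeg\H{o} remainder into the bound $o(n\Delta_n^{1-2p})$. So it suffices to establish the integral asymptotic
\[
\frac{1}{2\pi}\int_{-\pi}^{\pi} g_n(\lambda)^2\,d\lambda
=\frac{1}{2\pi}\Delta_n^{1-2p}\bigl(J_0(H,\sigma)+o(1)\bigr),
\]
since multiplying through by $n$ yields the claim. The key heuristic is that $g_n(\lambda)^2=(f_H/a_n)^2$ concentrates at the scale where the regularization $1+\gamma_n f_H$ activates, i.e.\ where $\gamma_n f_H(\lambda)\asymp 1$. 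Since $\gamma_n=\sigma^2\Delta_n^p$ and $f_H(\lambda)\sim c_H|\lambda|^{-p}$, this critical scale is $|\lambda|\asymp \Delta_n$, which is precisely the rescaling that should produce $J_0$.

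The actual steps I would carry out are: first split the integration as $\int_{|\lambda|\le \delta}+\int_{\delta<|\lambda|\le\pi}$ for a fixed small $\delta>0$. On the high-frequency piece, $f_H$ is bounded and $a_n\ge 1$, so that contribution is $O(1)$; since $\Delta_n^{1-2p}=n^{\alpha(2p-1)}\to\infty$ under $H>3/4$, this is $o(\Delta_n^{1-2p})$. On the low-frequency piece, I would perform the change of variables $\lambda=\Delta_n x$, giving $d\lambda=\Delta_n\,dx$ and
\[
g_n(\Delta_n x)=\frac{c_H\Delta_n^{-p}|x|^{-p}(1+r_H(\Delta_n x))}{1+\sigma^2 c_H|x|^{-p}(1+r_H(\Delta_n x))},
\]
after substituting \eqref{eq:lowfreq-f} and $\gamma_n=\sigma^2\Delta_n^p$. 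Squaring and pulling out $\Delta_n^{-2p}$ leaves an integrand that converges pointwise (as $n\to\infty$, i.e.\ $r_H(\Delta_n x)\to 0$ for each $x$) to the weight $w(x)$ defined in \eqref{eq:weight-w}, over the expanding domain $|x|\le \delta/\Delta_n$. The combined prefactor is $\Delta_n^{-2p}\cdot\Delta_n=\Delta_n^{1-2p}$, which matches the target scale.

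Finally, to promote pointwise to integral convergence I would apply dominated convergence: Lemma \ref{lem:verify-GSzego} and Lemma \ref{lem:FH-fH} provide a bound on $g_n^2$ uniform in $n$ that, after rescaling, dominates by the constant multiple $(c_H|x|^{-p}/(1+\tfrac{1}{2}\sigma^2 c_H|x|^{-p}))^2$ (for $|\Delta_n x|$ small enough that $r_H$ stays, say, below $1/2$). This majorant lies in $L^1(\mathbb{R})$ because $p\in(1/2,1)$, which controls both the origin (regularized) and infinity (where the integrand decays like $|x|^{-2p}$). The expected main obstacle is nailing down the uniform domination through the non-uniform correction $r_H(\Delta_n x)$: one must verify that the correction can be absorbed into the dominating constant uniformly on $\{|x|\le\delta/\Delta_n\}$, which follows from $r_H(\lambda)\to 0$ as $\lambda\to 0$ after fixing $\delta$ small. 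Combining the high- and low-frequency estimates yields \eqref{eq:trC2-asymp}.
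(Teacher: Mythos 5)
Your proposal is correct and follows essentially the same route as the paper's proof: reduce to the integral $I_{0,n}=\int_{-\pi}^{\pi}g_n(\lambda)^2\,d\lambda$ via \eqref{eq:TA-C2}, split at a fixed small frequency, bound the high-frequency part by $O(1)=o(\Delta_n^{1-2p})$, rescale $\lambda=\Delta_n x$ on the low-frequency part to obtain pointwise convergence to $\Delta_n^{-2p}w(x)$, and conclude by dominated convergence using Lemma \ref{lem:verify-GSzego}. Your additional care in making the majorant explicit and absorbing the correction $r_H(\Delta_n x)$ uniformly for $|\Delta_n x|\le\delta$ is a sound refinement of the same argument.
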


\begin{proof}
By \eqref{eq:TA-C2}, it suffices to analyze the integral
$I_{0,n}:=\int_{-\pi}^{\pi} g_n(\lambda)^2\,d\lambda$.
Fix a small $\eta\in(0,\pi)$ and split the integral into $I_{0,n}=I_{0,n}^{(0)}+I_{0,n}^{(\infty)}$ with
\[
I_{0,n}^{(0)}:=\int_{|\lambda|\le \eta} g_n(\lambda)^2\,d\lambda,\qquad
I_{0,n}^{(\infty)}:=\int_{\eta<|\lambda|\le\pi} g_n(\lambda)^2\,d\lambda.
\]
For $|\lambda|>\eta$, the density $f_H$ is bounded, and thus $g_n(\lambda)$ is uniformly bounded in $n$. Consequently, $I_{0,n}^{(\infty)}=O(1)$.
Since $p>1/2$, we have $\Delta_n^{1-2p}\to\infty$, which implies
$I_{0,n}^{(\infty)}=o(\Delta_n^{1-2p})$.

For the low-frequency part $I_{0,n}^{(0)}$, we use the change of variables $\lambda=\Delta_n x$:
\[
I_{0,n}^{(0)}=\Delta_n\int_{|x|\le \eta/\Delta_n}\left(\frac{f_H(\Delta_n x)}{1+\gamma_n f_H(\Delta_n x)}\right)^2dx.
\]
By \eqref{eq:lowfreq-f}, uniformly on compact sets of $x\neq 0$, we have
$f_H(\Delta_n x)=c_H\Delta_n^{-p}|x|^{-p}(1+o(1))$.
Moreover, $\gamma_n f_H(\Delta_n x)=\sigma^2\Delta_n^p\cdot c_H\Delta_n^{-p}|x|^{-p}(1+o(1))
=\sigma^2c_H|x|^{-p}(1+o(1))$.
Thus, for each fixed $x\neq0$,
\[
\left(\frac{f_H(\Delta_n x)}{1+\gamma_n f_H(\Delta_n x)}\right)^2
=\Delta_n^{-2p}w(x)\,(1+o(1)).
\]
Applying Lemma~\ref{lem:verify-GSzego} (with $b=c=f_H$) and the change of variables $\lambda=\Delta_n x$, the integrand is dominated by an integrable function of the form $C \Delta_n^{-2p} w(x)$.
Using the Dominated Convergence Theorem and the fact that $\eta/\Delta_n\to\infty$, we obtain
\[
I_{0,n}^{(0)}=\Delta_n^{1-2p}\int_{\mathbb{R}} w(x)\,dx+o\bigl(\Delta_n^{1-2p}\bigr)
=\Delta_n^{1-2p}\bigl(J_0+o(1)\bigr).
\]
Combining this with the estimate for $I_{0,n}^{(\infty)}$ yields $I_{0,n}=\Delta_n^{1-2p}(J_0+o(1))$.
Inserting this into \eqref{eq:TA-C2} completes the proof.
\end{proof}
Now we can deal with the part of the parameter $\sigma$ of the score funtion. 
\begin{proposition}\label{prop:sigma-clt}
For $S_{\sigma,n}=\partial_\sigma \ell_n$ and $\mathcal{T}_n=n \Delta_n \rightarrow \infty$, we have the central limit theorem as $n\rightarrow \infty$: 
\begin{equation}\label{eq:sigma-clt-final}
\frac{S_{\sigma,n}}{\sqrt{\mathcal{T}_n}}
\ \Rightarrow\
\mathcal N\!\left(0,\ \frac{\sigma^2}{\pi}J_0(H,\sigma)\right),
\end{equation}
with $J_0(H,\sigma)$ defined in \eqref{eq:J0}.
\end{proposition}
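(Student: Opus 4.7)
The plan is to reduce $S_{\sigma,n}$ to a centered quadratic form in an i.i.d.\ standard normal vector, apply the quadratic-form CLT of Lemma \ref{lem:QF-CLT} to $Q_n(C_n)$, and then calibrate the scaling by the trace asymptotics of Lemma \ref{lem:trace-C2}. By Lemma \ref{lem:explicit-linear} we already have the clean representation
\[
S_{\sigma,n}=\frac{\gamma_n}{\sigma}\,Q_n(C_n),\qquad Q_n(C_n)=Z_n^\top C_n Z_n-\tr(C_n),\quad Z_n\sim\mathcal N(0,I_n),
\]
so everything reduces to controlling $\|C_n\|_{\mathrm{op}}$ and $\|C_n\|_F$ and matching the normalizations.

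The first step is to verify the hypothesis \eqref{eq:opF-cond} of Lemma \ref{lem:QF-CLT} for $C_n=A_n^{-1/2}T_n(H)A_n^{-1/2}$. Since $C_n$ is similar to $A_n^{-1}T_n(H)$, its eigenvalues are $\lambda_i(T_n(H))/(1+\gamma_n\lambda_i(T_n(H)))$, and from the elementary inequality $x/(1+\gamma_n x)\le \gamma_n^{-1}$ for $x\ge 0$ we obtain the crude but sufficient bound
\[
\opnorm{C_n}\le \gamma_n^{-1}=\sigma^{-2}\Delta_n^{-p}.
\]
On the other hand, Lemma \ref{lem:trace-C2} gives $\Fnorm{C_n}^2=\tr(C_n^2)\sim \frac{n}{2\pi}\Delta_n^{1-2p}J_0(H,\sigma)$, hence
\[
\frac{\opnorm{C_n}}{\Fnorm{C_n}}\le \frac{C}{\sigma^2\sqrt{n\Delta_n^{1-2p}\cdot \Delta_n^{2p-2}}}=\frac{C'}{\sqrt{\mathcal T_n}}\longrightarrow 0,
\]
where I used $\Delta_n^{-p}=\sqrt{\Delta_n^{-2p}}$ and collected powers. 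This is the main technical step, but in our setting it is easy thanks to the uniform bound $x/(1+\gamma_n x)\le 1/\gamma_n$; the fact that the ratio tends to $0$ at the precise rate $\mathcal T_n^{-1/2}$ is also what ultimately forces the normalization $\sqrt{\mathcal T_n}$.

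With \eqref{eq:opF-cond} in hand, Lemma \ref{lem:QF-CLT} yields $Q_n(C_n)/(\sqrt 2\,\Fnorm{C_n})\Rightarrow\mathcal N(0,1)$. Finally I would compute the asymptotic variance of $S_{\sigma,n}/\sqrt{\mathcal T_n}$: using $\gamma_n/\sigma=\sigma\Delta_n^{p}$ and Lemma \ref{lem:trace-C2},
\[
\Var\!\Bigl(\tfrac{S_{\sigma,n}}{\sqrt{\mathcal T_n}}\Bigr)
=\frac{2\gamma_n^2}{\sigma^2\mathcal T_n}\Fnorm{C_n}^2
=\frac{2\sigma^2\Delta_n^{2p}}{n\Delta_n}\cdot\frac{n}{2\pi}\Delta_n^{1-2p}\bigl(J_0(H,\sigma)+o(1)\bigr)
\longrightarrow \frac{\sigma^2}{\pi}J_0(H,\sigma).
\]
Combining the standardized CLT with this variance calibration via Slutsky gives \eqref{eq:sigma-clt-final}. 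The only delicate point is the operator-to-Frobenius ratio bound; the rest is bookkeeping of powers of $\Delta_n$ and reuse of Lemma \ref{lem:trace-C2}.
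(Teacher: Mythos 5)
Your proposal is correct and follows essentially the same route as the paper's proof: the representation $S_{\sigma,n}=\tfrac{\gamma_n}{\sigma}Q_n(C_n)$ from Lemma~\ref{lem:explicit-linear}, the eigenvalue bound $\opnorm{C_n}\le\gamma_n^{-1}$ combined with Lemma~\ref{lem:trace-C2} to verify \eqref{eq:opF-cond}, then Lemma~\ref{lem:QF-CLT} and the variance calibration. The only blemish is a typo in the exponent of the intermediate bound (the factor under the square root should be $\Delta_n^{2p}$, not $\Delta_n^{2p-2}$, to yield $\sqrt{n\Delta_n}=\sqrt{\mathcal T_n}$); the stated conclusion $C'/\sqrt{\mathcal T_n}\to 0$ and the rest of the argument are correct.
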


\begin{proof}
First of all we will Verify the condition $\opnorm{C_n}/\Fnorm{C_n}\to0$. As we know $T_n$ is positive semidefinite (as a Toeplitz covariance matrix), hence $A_n$ is positive definite. Consequently, the eigenvalues of
\[
C_n=A_n^{-1/2}T_nA_n^{-1/2}
\]
are of the form $\lambda/(1+\gamma_n\lambda)$ where $\lambda\ge0$ are the eigenvalues of $T_n$.
The function $x \mapsto x/(1+\gamma_n x)$ is increasing on $[0, \infty)$ and bounded by $1/\gamma_n$. Thus,
\[
\opnorm{C_n}=\max_{\lambda\in \text{spec}(T_n)}\frac{\lambda}{1+\gamma_n\lambda}\le \frac{1}{\gamma_n}.
\]
Using the trace asymptotic from Lemma~\ref{lem:trace-C2}, we have
\[
\Fnorm{C_n}=\sqrt{\Tr(C_n^2)} = \sqrt{\frac{n}{2\pi}\Delta_n^{1-2p}\bigl(J_0+o(1)\bigr)}.
\]
Recalling that $\gamma_n = \sigma^2 \Delta_n^p$, the ratio behaves as:
\[
\frac{\opnorm{C_n}}{\Fnorm{C_n}}
\le
\frac{1/\gamma_n}{\Fnorm{C_n}}
\asymp
\frac{\Delta_n^{-p}}{\sqrt{n}\Delta_n^{\frac{1-2p}{2}}}
=
\frac{1}{\sqrt{n\Delta_n}}
=
\frac{1}{\sqrt{\mathcal{T}_n}}
\to 0,
\]
since $\mathcal{T}_n=n\Delta_n=n^{1-\alpha}\to\infty$ for $\alpha\in(0,1)$.

Now, set $Q_n(C_n):=Z_{n}^\top C_n Z_{n}-\Tr(C_n)$. By Lemma~\ref{lem:QF-CLT},
\[
\frac{Q_n(C_n)}{\sqrt{2}\Fnorm{C_n}}\Rightarrow \mathcal N(0,1).
\]
From Lemma~\ref{lem:explicit-linear}, $S_{\sigma,n} = \frac{\gamma_n}{\sigma} Q_n(C_n)$. Thus,
\[
\frac{S_{\sigma,n}}{\sqrt{2}\,\frac{\gamma_n}{\sigma}\Fnorm{C_n}}
=\frac{Q_n(C_n)}{\sqrt{2}\Fnorm{C_n}}
\Rightarrow \mathcal N(0,1).
\]
It remains to identify the variance scale.
Using $\gamma_n=\sigma^2\Delta_n^{p}$ and the asymptotic for $\Fnorm{C_n}^2=\Tr(C_n^2)$, we obtain
\[
\left(\,\frac{\sqrt{2}\gamma_n}{\sigma}\Fnorm{C_n}\right)^2
=  \frac{2\sigma^4 \Delta_n^{2p}}{\sigma^2} \cdot \frac{n}{2\pi}\Delta_n^{1-2p}\Big(J_0+o(1)\Big)
=\frac{\sigma^2n \Delta_n}{\pi}\,  \Big(J_0+o(1)\Big)
=\frac{\sigma^2}{\pi}\,\mathcal{T}_n \Big(J_0+o(1)\Big).
\]
Therefore,
\[
\frac{S_{\sigma,n}}{\sqrt{\mathcal{T}_n}} \Rightarrow \mathcal N\!\left(0,\ \frac{\sigma^2}{\pi}J_0\right),
\]
which yields \eqref{eq:sigma-clt-final}.
\end{proof}
\subsection{The CLT for the Remainder of Score function of $H$}
In Propostion \ref{prop:exact-scores} we know that $S_{H,n}$ has two part, one is associated with $S_{\sigma,H}$ and the other one connected with matrix $D_n$ we called the remainder one. To get the CLT of this remainder part we will use the same way as in $S_{\sigma,H}$.

\begin{lemma}\label{lem:D-op}
Recall the matrices defined in Lemma~\ref{lem:explicit-linear}:
\[
A_n:=I_n+\gamma_n T_n(f_H),\qquad 
C_n:=A_n^{-1/2}T_n(f_H)A_n^{-1/2},\qquad
D_n:=A_n^{-1/2}T_n(\dot f_H)A_n^{-1/2}.
\]
Then, as $n\to\infty$,
\begin{equation}\label{eq:D-op-final}
\opnorm{D_n}=O\!\big(\gamma_n^{-1}\log n\big).
\end{equation}
\end{lemma}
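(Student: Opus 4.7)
The plan is to isolate the logarithmic singularity of $\dot f_H$ via the Fisher--Hartwig decomposition of Lemma~\ref{lem:FH-fH} and then track how the sandwich $A_n^{-1/2}(\cdot)A_n^{-1/2}$ damps the symbol near the spectral singularity. Writing
\[
\dot f_H(\lambda)=-2L(\lambda)f_H(\lambda)+r_H(\lambda),\qquad L(\lambda):=\log|2\sin(\lambda/2)|,\quad r_H:=|2\sin(\lambda/2)|^{-p}\dot\ell_H,
\]
the regularity of $\ell_H$ gives $|r_H|\le Cf_H$ pointwise. Since $g\mapsto T_n(g)$ preserves the pointwise order on real symbols, this yields $-CT_n(f_H)\preceq T_n(r_H)\preceq CT_n(f_H)$ and hence
\[
\opnorm{A_n^{-1/2}T_n(r_H)A_n^{-1/2}}\le C\opnorm{C_n}\le C/\gamma_n,
\]
using the bound on $\opnorm{C_n}$ established in the proof of Proposition~\ref{prop:sigma-clt}. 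The remaining task is to show $\opnorm{A_n^{-1/2}T_n(Lf_H)A_n^{-1/2}}=O((\log n)/\gamma_n)$.

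To this end, fix a unit vector $u$ and put $v:=A_n^{-1/2}u$, so that $\|v\|^2+\gamma_n v^\top T_n(f_H)v=\|u\|^2=1$; in particular $\int f_H|P_v|^2 d\lambda/(2\pi)\le 1/\gamma_n$, where $P_v(\lambda):=\sum_{k=1}^n v_k e^{ik\lambda}$. The quadratic form factors as $u^\top A_n^{-1/2}T_n(Lf_H)A_n^{-1/2}u=\int Lf_H|P_v|^2 d\lambda/(2\pi)$, and I would split the integration at the natural scale $|\lambda|\sim 1/n$. On the outer region $\{|\lambda|\ge 1/n\}$ the bound $|L|\le\log n+O(1)$ directly gives
\[
\Bigl|\int_{|\lambda|\ge 1/n}Lf_H|P_v|^2 d\lambda\Bigr|\le C\log n\cdot\int f_H|P_v|^2 d\lambda\le C\log n/\gamma_n.
\]
On the inner region $\{|\lambda|<1/n\}$ the log is unbounded, and the sandwich must supply the compensation. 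The key input is the Christoffel--Szeg\H o pointwise estimate
\[
|P_v(\lambda)|^2=|e_\lambda^\ast A_n^{-1/2}u|^2\le (e_\lambda^\ast A_n^{-1}e_\lambda)\,\|u\|^2\le C\,\frac{n}{a_n(\lambda)},
\]
which, together with the universal bound $f_H/a_n\le 1/\gamma_n$, gives the uniform pointwise estimate $f_H(\lambda)|P_v(\lambda)|^2\le Cn/\gamma_n$. Combined with $\int_0^{1/n}|\log\lambda|\,d\lambda=O((\log n)/n)$, the inner-region integral is bounded by $(Cn/\gamma_n)\cdot O((\log n)/n)=O((\log n)/\gamma_n)$. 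Putting the two regions together with the $r_H$ bound yields $\opnorm{D_n}=O((\log n)/\gamma_n)$.

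The main obstacle is the Christoffel-type upper bound $e_\lambda^\ast A_n^{-1}e_\lambda\le Cn/a_n(\lambda)$ uniform in $\lambda\in[-\pi,\pi]$. The matching lower bound is standard: test the extremal characterization $e_\lambda^\ast A_n^{-1}e_\lambda=\sup_P|P(\lambda)|^2/(P^\ast A_n P)$ with a Fej\'er kernel centered at $\lambda$. The upper direction is subtler; I would derive it either via the comparison $A_n^{-1}\preceq\gamma_n^{-1}T_n(f_H)^{-1}$ combined with a classical Christoffel asymptotic for the long-memory weight $f_H$, or through a Szeg\H o-type inversion relating $A_n^{-1}$ to $T_n(1/a_n)$ with a controlled remainder (the Fisher--Hartwig structure of $a_n=1+\gamma_n f_H$ from Lemma~\ref{lem:FH-fH}, combined with $a_n\ge 1$, should suffice). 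A self-contained alternative that avoids this sharp pointwise bound would replace the inner-region step by a Frobenius estimate drawn from the generalized Szeg\H o trace approximation of Proposition~\ref{prop:GSzego-inv}, at the cost of verifying that the remainder induced by the $n$-dependent cutoff stays subdominant throughout the regime $H>3/4$.
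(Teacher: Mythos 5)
Your route is genuinely different from the paper's. The paper exploits that $A_n$ and $T_n(f_H)$ commute to factor $D_n=B_nE_nB_n$ with $B_n=T_n(f_H)^{1/2}A_n^{-1/2}$, so that $\opnorm{B_n}^2=\opnorm{C_n}\le\gamma_n^{-1}$, and then reduces everything to $\opnorm{E_n}=O(\log n)$ for the preconditioned matrix $E_n=T_n(f_H)^{-1/2}\dot T_n(f_H)T_n(f_H)^{-1/2}$, which it identifies with $T_n(b_H)+O(1)$ and bounds via the $O(1/|k|)$ Fourier coefficients of the log-derivative symbol. You instead work directly with the quadratic form $\frac{1}{2\pi}\int Lf_H|P_v|^2$, splitting at $|\lambda|\sim1/n$. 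Your treatment of the smooth remainder $r_H$ (via monotonicity of $g\mapsto T_n(g)$ in the Loewner order and $|r_H|\le Cf_H$) and of the outer region (using $\int f_H|P_v|^2\,d\lambda/(2\pi)\le\gamma_n^{-1}$ and $|L|\le\log n+O(1)$) are both correct and clean, and arguably more self-contained than the paper's appeal to "standard preconditioning results."

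The gap is the inner region. Your "key input" $e_\lambda^{\ast}A_n^{-1}e_\lambda\le Cn/a_n(\lambda)$, claimed uniformly on $[-\pi,\pi]$, is false as stated: as $\lambda\to0$ the right-hand side tends to $0$ (since $a_n(\lambda)\to\infty$), while Cauchy--Schwarz gives $e_\lambda^{\ast}A_n^{-1}e_\lambda\ge n^2/(e_\lambda^{\ast}A_ne_\lambda)\gtrsim n^{1-p}/\gamma_n>0$ for $|\lambda|\lesssim1/n$. The correct statement is the saturated Christoffel bound $e_\lambda^{\ast}A_n^{-1}e_\lambda\le Cn/a_n(\max(|\lambda|,1/n))$, i.e.\ $\le Cn^{1-p}/\gamma_n$ on the inner region; with that replacement your computation still closes, since $f_H(\lambda)\,n^{1-p}\gamma_n^{-1}\asymp|\lambda|^{-p}n^{1-p}\gamma_n^{-1}$ and $\int_0^{1/n}|\log\lambda|\,\lambda^{-p}\,d\lambda\asymp n^{p-1}\log n$, giving $O(\gamma_n^{-1}\log n)$. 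But this saturated bound for the $n$-dependent Fisher--Hartwig symbol $a_n=1+\gamma_nf_H$ is exactly the hard step, and neither of your two suggested derivations supplies it: the comparison $A_n^{-1}\preceq\gamma_n^{-1}T_n(f_H)^{-1}$ only shifts the problem to a Christoffel upper bound for the singular weight $f_H$ at distance $\lesssim1/n$ from the singularity (which likewise requires the saturation and is not a "classical" asymptotic), and the Szeg\H{o}-type inversion $A_n^{-1}\approx T_n(1/a_n)$ with controlled remainder is itself nontrivial for triangular-array symbols. So the proposal as written does not constitute a proof; it needs either a proved version of the saturated Christoffel estimate or a switch to the paper's commutative factorization, which bypasses pointwise bounds entirely.
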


\begin{proof}
Still with $T_n:=T_n(f_H)$ and $\dot T_n:=T_n(\dot f_H)$ we define the auxiliary matrices
\[
B_n:=T_n^{1/2}A_n^{-1/2}=A_n^{-1/2}T_n^{1/2},
\qquad
E_n:=T_n^{-1/2}\dot T_n\,T_n^{-1/2}.
\]
Then we can factorize $D_n$ as $D_n=B_n E_n B_n$. Consequently,
\[
\opnorm{D_n}\le \opnorm{B_n}^2\,\opnorm{E_n}.
\]
First, observe that
\[
\opnorm{B_n}^2=\opnorm{B_n^2}=\opnorm{A_n^{-1/2}T_nA_n^{-1/2}}=\opnorm{C_n}.
\]
As shown in the proof of Proposition~\ref{prop:sigma-clt}, the eigenvalues of $C_n$ are bounded by $1/\gamma_n$. Thus,
\begin{equation}\label{eq:D-op-reduce}
\opnorm{D_n}\le \frac{1}{\gamma_n}\,\opnorm{E_n}.
\end{equation}

It remains to show $\opnorm{E_n}=O(\log n)$.
Define the log-derivative symbol
\[
b_H(\lambda):=\partial_H\log f_H(\lambda)=\frac{\dot f_H(\lambda)}{f_H(\lambda)}.
\]
By Lemma~\ref{lem:FH-fH}, we have $f_H(\lambda)=|2\sin(\lambda/2)|^{-p}\ell_H(\lambda)$ with $\ell_H$ strictly positive and $C^1$.
Therefore,
\[
b_H(\lambda)=\partial_H\log \ell_H(\lambda)\;-\;2\log\bigl(2|\sin(\lambda/2)|\bigr).
\]
The first term is bounded with absolutely summable Fourier coefficients. The second term involves the Fourier series of the log-sine function:
\[
-\log\bigl(2|\sin(\lambda/2)|\bigr)=\sum_{k=1}^\infty \frac{\cos(k\lambda)}{k},\qquad \lambda\in(-\pi,\pi).
\]
The Fourier coefficients $(b_H)_k$ satisfy $|(b_H)_k|\le C/|k|$ for $k\neq0$.
Hence, the operator norm of the Toeplitz matrix generated by $b_H$ satisfies
\[
\opnorm{T_n(b_H)}
\le \sum_{k=-(n-1)}^{n-1}|(b_H)_k|
\le C'\sum_{k=1}^n \frac{1}{k}
\le C''\log n.
\]
Finally, $E_n$ approximates $T_n(b_H)$. Specifically, $E_n=T_n^{-1/2} T_n(b_H f_H) T_n^{-1/2}$.

By standard preconditioning results for Toeplitz matrices, the sandwiched matrix $E_n=T_n^{-1/2}T_n(\dot f_H)T_n^{-1/2}$ admits the decomposition $E_n=T_n(b_H)+R_n$ with $\opnorm{R_n}=O(1)$ (see \cite{BottcherSilb} or \cite{GrenanderSzego}); Substituting this into \eqref{eq:D-op-reduce} yields \eqref{eq:D-op-final}.
\end{proof}

\begin{lemma}\label{lem:trace-D2}
Under \eqref{eq:lowfreq-f}, \eqref{eq:lowfreq-df} and \eqref{eq:TA-D2}, we have
\begin{equation}\label{eq:trD2-asymp}
\tr(D_n^2)
=
\frac{n}{2\pi}\Delta_n^{1-2p}\Bigl(4\ln^2(1/\Delta_n)\,J_0(H,\sigma)
+4\ln(1/\Delta_n)\,J_1(H,\sigma)+J_2(H,\sigma)
+o\!\big(\ln^2(1/\Delta_n)\big)\Bigr).
\end{equation}
\end{lemma}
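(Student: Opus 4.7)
The plan is to mirror the proof of Lemma \ref{lem:trace-C2}, starting from the Szegő-type trace approximation \eqref{eq:TA-D2} and reducing the problem to the asymptotic analysis of the spectral integral $I_{2,n} := \int_{-\pi}^{\pi} h_n(\lambda)^2\,d\lambda$ via the rescaling $\lambda = \Delta_n x$. The key new feature compared to $\tr(C_n^2)$ is that $\dot f_H$ carries an additional logarithmic Fisher--Hartwig factor via \eqref{eq:lowfreq-df}; after rescaling, this factor becomes a polynomial in $\ln(1/\Delta_n)$ of degree $2$, whose coefficients are exactly the integral constants $J_0$, $J_1$, $J_2$.

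First I would split $I_{2,n}$ at a fixed cutoff $\eta \in (0,\pi)$ into a low-frequency part on $\{|\lambda|\le\eta\}$ and a high-frequency part on $\{\eta<|\lambda|\le\pi\}$. On the latter region $\dot f_H$ is smooth and bounded and $a_n^{-1}\le 1$, so this part is $O(1)$, which is $o(\Delta_n^{1-2p}\ln^2(1/\Delta_n))$ since $p>1/2$ and $\Delta_n \to 0$. On the low-frequency part the substitution $\lambda=\Delta_n x$ gives
$$
I_{2,n}^{(0)} = \Delta_n \int_{|x|\le \eta/\Delta_n} \left(\frac{\dot f_H(\Delta_n x)}{1+\gamma_n f_H(\Delta_n x)}\right)^2 dx.
$$
Inserting \eqref{eq:lowfreq-df} with $\lambda=\Delta_n x$ factors the numerator as $f_H(\Delta_n x)\bigl[\,2\ln(1/\Delta_n) + (C_H-2\ln|x|) + \rho_H(\Delta_n x)\bigr]$, and \eqref{eq:lowfreq-f} with $\gamma_n=\sigma^2\Delta_n^p$ yields the pointwise limit
$$
\left(\frac{f_H(\Delta_n x)}{1+\gamma_n f_H(\Delta_n x)}\right)^2 = \Delta_n^{-2p}\,w(x)\,(1+o(1))
$$
for each fixed $x\ne 0$, with $w$ from \eqref{eq:weight-w}. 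Expanding the square of the bracketed factor and integrating term-by-term against $w$ then produces the three contributions $4\ln^2(1/\Delta_n)J_0$, $4\ln(1/\Delta_n)J_1$, and $J_2$ after multiplication by $\Delta_n^{1-2p}$.

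The main obstacle is to justify the exchange of limit and integral uniformly in $n$, given that both the integration domain $\{|x|\le \eta/\Delta_n\}$ and the logarithmic coefficients are $n$-dependent. I plan to use the envelope \eqref{eq:bc-bound} from Lemma \ref{lem:verify-GSzego}, which provides the $n$-independent integrable dominant $C\,w(x)(1+|\ln|x||)^2$. This dominant lies in $L^1(\mathbb R)$ because $w$ is bounded near $0$ (the ratio $c_H|x|^{-p}/(1+\sigma^2 c_H|x|^{-p})$ tends to $\sigma^{-2}$, so $w \to \sigma^{-4}$), so the $\ln^k$ factors are locally integrable; and at infinity $w(x)\asymp c_H^2|x|^{-2p}$ with $2p>1$, making $|x|^{-2p}\ln^2|x|$ integrable. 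Dominated convergence then delivers each of the three leading integrals $J_0$, $J_1$, $J_2$.

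Finally I would control the residual terms involving $\rho_H(\Delta_n x)$: the cross term with the $2\ln(1/\Delta_n)$ factor contributes $\ln(1/\Delta_n)\int w(x)\,\rho_H(\Delta_n x)\,dx$, which, since $\rho_H(\Delta_n x)\to 0$ pointwise with the envelope above, is $o(\ln(1/\Delta_n))$ and hence, paired with the outer $\ln(1/\Delta_n)$, is $o(\ln^2(1/\Delta_n))$; the squared $\rho_H$ term and the cross with $(C_H-2\ln|x|)$ are of even smaller order. Combining the low- and high-frequency estimates, multiplying by $n/(2\pi)$, and absorbing the $O(n\log^2 n)$ remainder in \eqref{eq:TA-D2} into $o(n\Delta_n^{1-2p}\ln^2(1/\Delta_n))$ (as verified at the end of the proof of Lemma \ref{lem:TA}) yields \eqref{eq:trD2-asymp}.
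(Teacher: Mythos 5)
Your proposal is correct and follows essentially the same route as the paper's proof: split $I_{2,n}$ at a fixed cutoff $\eta$, bound the high-frequency part by $O(1)=o(\Delta_n^{1-2p}\ln^2(1/\Delta_n))$, rescale $\lambda=\Delta_n x$ on the low-frequency part, expand $b_n(x)^2=(2\ln(1/\Delta_n)+C_H-2\ln|x|+o(1))^2$ against $\kappa_n(x)=\Delta_n^{-2p}w(x)(1+o(1))$, and conclude by dominated convergence using the envelope from Lemma~\ref{lem:verify-GSzego}. Your extra remarks on the integrability of $w(x)(1+|\ln|x||)^2$ and on the $\rho_H$ residuals only make explicit what the paper absorbs into its $o(\cdot)$ bookkeeping.
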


\begin{proof}
By \eqref{eq:TA-D2}, it suffices to analyze
\[
I_{2,n}:=\int_{-\pi}^{\pi} h_n(\lambda)^2\,d\lambda
=
\int_{-\pi}^{\pi}\frac{\dot f_H(\lambda)^2}{(1+\gamma_n f_H(\lambda))^2}\,d\lambda.
\]
Fix $\eta\in(0,\pi)$ and split $I_{2,n}=I_{2,n}^{(0)}+I_{2,n}^{(\infty)}$ with
\[
I_{2,n}^{(0)}:=\int_{|\lambda|\le \eta} h_n(\lambda)^2\,d\lambda,\qquad
I_{2,n}^{(\infty)}:=\int_{\eta<|\lambda|\le\pi} h_n(\lambda)^2\,d\lambda.
\]

On $\{\eta<|\lambda|\le\pi\}$ both $f_H$ and $\dot f_H$ are bounded and $a_n(\lambda)\ge1$, hence $h_n(\lambda)^2\le C_\eta$ uniformly in $n$,
so $I_{2,n}^{(\infty)}=O(1)$. Since $\Delta_n^{1-2p}\ln^2(1/\Delta_n)\to\infty$ for $p>1/2$,
\[
I_{2,n}^{(\infty)}=o\!\big(\Delta_n^{1-2p}\ln^2(1/\Delta_n)\big).
\]
With $\lambda=\Delta_n x$,
\[
I_{2,n}^{(0)}=\Delta_n\int_{|x|\le \eta/\Delta_n}
\frac{\dot f_H(\Delta_n x)^2}{(1+\gamma_n f_H(\Delta_n x))^2}\,dx.
\]
Write again
\[
\kappa_n(x):=\frac{f_H(\Delta_n x)^2}{(1+\gamma_n f_H(\Delta_n x))^2},
\qquad
b_n(x):=\frac{\dot f_H(\Delta_n x)}{f_H(\Delta_n x)}.
\]
Then
\begin{equation}\label{eq:I2-factor}
I_{2,n}^{(0)}=\Delta_n\int_{|x|\le \eta/\Delta_n}\kappa_n(x)\,b_n(x)^2\,dx.
\end{equation}

Now, for each fixed $x\neq0$,
\[
\kappa_n(x)=\Delta_n^{-2p}w(x)\bigl(1+o(1)\bigr),
\]
and
\[
b_n(x)=2\ln(1/\Delta_n)+\bigl(C_H-2\ln|x|\bigr)+o(1).
\]
Therefore,
\begin{align}
b_n(x)^2
&=
\Bigl(2\ln(1/\Delta_n)+C_H-2\ln|x|+o(1)\Bigr)^2 \notag\\
&=
4\ln^2(1/\Delta_n)
+4\ln(1/\Delta_n)\bigl(C_H-2\ln|x|\bigr)
+\bigl(C_H-2\ln|x|\bigr)^2
+o\bigl(\ln^2(1/\Delta_n)\bigr). \label{eq:bn2-expand}
\end{align}
Multiplying by $\Delta_n\,\kappa_n(x)$ gives the pointwise expansion
\begin{align}
\Delta_n\,\kappa_n(x)b_n(x)^2
&=
\Delta_n^{1-2p}w(x)\Bigl(
4\ln^2(1/\Delta_n)
+4\ln(1/\Delta_n)\bigl(C_H-2\ln|x|\bigr)\Bigr)
\notag\\
&\quad +\Delta_n^{1-2p}w(x)\bigl(C_H-2\ln|x|\bigr)^2
 +o\!\big(\Delta_n^{1-2p}\ln^2(1/\Delta_n)\big). \label{eq:integrand2-pointwise}
\end{align}

In Lemma~\ref{lem:verify-GSzego} with $b=c=\dot f_H$ yields, for $|\lambda|\le\eta$,
\[
\frac{\dot f_H(\lambda)^2}{(1+\gamma_n f_H(\lambda))^2}
\le
C\,\frac{f_H(\lambda)^2\bigl(1+|\log|\lambda||\bigr)^2}{(1+\gamma_n f_H(\lambda))^2}.
\]
With $\lambda=\Delta_n x$ and the same use of \eqref{eq:lowfreq-f} as before, we obtain for $n$ large
\[
\Delta_n\,\kappa_n(x)b_n(x)^2
\le
C'\,\Delta_n^{1-2p}\,w(x)\Bigl(1+|\log\Delta_n|+|\log|x||\Bigr)^2.
\]
By your assumption right after \eqref{eq:weight-w}, we have $w\in L^1(\mathbb R)$ and $w(x)\,|\log|x||^2\in L^1(\mathbb R)$.
Thus the right-hand side is integrable in $x$ (again up to the factor $\Delta_n^{1-2p}$), so dominated convergence applies to
\eqref{eq:I2-factor}, and we may extend the truncation $|x|\le\eta/\Delta_n$ to $\mathbb R$ because $\eta/\Delta_n\to\infty$.

Consequently, integrating the expansion \eqref{eq:integrand2-pointwise} yields
\begin{align*}
I_{2,n}^{(0)}
=
\Delta_n^{1-2p}\Bigl(
&4\ln^2(1/\Delta_n)\int_{\mathbb R}w(x)\,dx
+4\ln(1/\Delta_n)\int_{\mathbb R}w(x)\bigl(C_H-2\ln|x|\bigr)\,dx\\
&+\int_{\mathbb R}w(x)\bigl(C_H-2\ln|x|\bigr)^2\,dx
+o\bigl(\ln^2(1/\Delta_n)\bigr)
\Bigr).
\end{align*}
By \eqref{eq:J0}--\eqref{eq:J2}, this becomes
\[
I_{2,n}^{(0)}
=
\Delta_n^{1-2p}\Bigl(
4\ln^2(1/\Delta_n)\,J_0(H,\sigma)
+4\ln(1/\Delta_n)\,J_1(H,\sigma)
+J_2(H,\sigma)
+o\bigl(\ln^2(1/\Delta_n)\bigr)
\Bigr).
\]
Combining with $I_{2,n}^{(\infty)}=O(1)$ gives the same expansion for $I_{2,n}$. Finally insert into \eqref{eq:TA-D2} to obtain \eqref{eq:trD2-asymp}.
\end{proof}

\begin{proposition}\label{prop:RH-clt}
Let
\[
R_{H,n}:=\frac{\gamma_n}{2}\Big(Z_{n}^\top D_n Z_{n}-\Tr(D_n)\Big),\qquad Z_{n}\sim\mathcal N(0,I_n),
\]
with $D_n=A_n^{-1/2}T_n(\dot f_H)A_n^{-1/2}$, $\Delta_n=n^{-\alpha}$, $\alpha\in(0,1)$, $\gamma_n=\sigma^2\Delta_n^{p}$, $p=2H-1\in(1/2,1)$,
and $\mathcal{T}_n:=n\Delta_n\to\infty$. With \eqref{eq:trD2-asymp} and Lemma~\ref{lem:D-op} we have 
\begin{equation}\label{eq:RH-clt-final-clean}
\frac{R_{H,n}}{|\ln\Delta_n|\,\sqrt{\mathcal{T}_n}}
\ \Rightarrow\
\mathcal N\!\left(0,\ \frac{\sigma^4}{\pi}J_0(H,\sigma)\right).
\end{equation}
\end{proposition}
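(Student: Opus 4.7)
The plan is to apply the quadratic-form CLT of Lemma \ref{lem:QF-CLT} directly to the symmetric matrix $D_n$, in complete analogy with the treatment of $C_n$ in Proposition \ref{prop:sigma-clt}, and then convert the resulting $\mathcal N(0,1)$ limit for $Q_n(D_n):=Z_n^\top D_n Z_n-\Tr(D_n)$ into \eqref{eq:RH-clt-final-clean} via the identity $R_{H,n}=\tfrac{\gamma_n}{2}Q_n(D_n)$ and a Slutsky argument.

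The first step is to verify the spectral ratio condition $\opnorm{D_n}/\Fnorm{D_n}\to 0$. Lemma \ref{lem:D-op} supplies $\opnorm{D_n}=O(\gamma_n^{-1}\log n)$, and Lemma \ref{lem:trace-D2} identifies the Frobenius side as $\Fnorm{D_n}^2=\tr(D_n^2)\asymp n\Delta_n^{1-2p}\ln^2(1/\Delta_n)$. Since $|\ln\Delta_n|=\alpha\log n$ and $\gamma_n=\sigma^2\Delta_n^p$, the logarithmic factors cancel and
\[
\frac{\opnorm{D_n}}{\Fnorm{D_n}}
\;\lesssim\;\frac{\Delta_n^{-p}\log n}{\sqrt{n\Delta_n^{1-2p}}\,|\ln\Delta_n|}
\;\asymp\;\frac{1}{\sqrt{n\Delta_n}}
\;=\;\frac{1}{\sqrt{\mathcal T_n}}
\;\longrightarrow\;0,
\]
the same decay rate that worked for $C_n$.

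Granted the norm ratio condition, Lemma \ref{lem:QF-CLT} gives $Q_n(D_n)/(\sqrt 2\,\Fnorm{D_n})\Rightarrow\mathcal N(0,1)$, and the remaining task is to identify the variance scale. A direct computation yields $\Var(R_{H,n})=\tfrac{\gamma_n^2}{2}\tr(D_n^2)$; inserting the three-term expansion of Lemma \ref{lem:trace-D2} and using $\gamma_n^2=\sigma^4\Delta_n^{2p}$, the leading $4\ln^2(1/\Delta_n)\,J_0$ piece produces
\[
\frac{\sigma^4\Delta_n^{2p}}{2}\cdot\frac{n\Delta_n^{1-2p}}{2\pi}\cdot 4\ln^2(1/\Delta_n)\,J_0
\;=\;\frac{\sigma^4}{\pi}\,\mathcal T_n\,\ln^2(1/\Delta_n)\,J_0,
\]
while the $J_1$ and $J_2$ corrections contribute only $O(\mathcal T_n\,|\ln\Delta_n|)$ and $O(\mathcal T_n)$, both of which are $o(\mathcal T_n\ln^2(1/\Delta_n))$. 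Dividing by the square of the normalizer $|\ln\Delta_n|\sqrt{\mathcal T_n}$ therefore leaves exactly $\sigma^4J_0/\pi+o(1)$, and Slutsky's lemma delivers \eqref{eq:RH-clt-final-clean}.

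The one delicate point worth flagging is essentially the bookkeeping of logarithms: the extra $\log n$ in Lemma \ref{lem:D-op}, originating from the logarithmic Fisher--Hartwig factor of $\dot f_H$, must be absorbed by a matching logarithmic enhancement of $\Fnorm{D_n}$. Lemma \ref{lem:trace-D2} supplies exactly this enhancement because the dominant near-zero behaviour of $\dot f_H$ is $f_H(\lambda)(-2\log|\lambda|)$, and this single mechanism accounts simultaneously for the $\ln^2$ in $\tr(D_n^2)$ and for the unusual normalization $|\ln\Delta_n|\sqrt{\mathcal T_n}$ in the conclusion. Were these two logarithmic effects not to cancel exactly, one would be pushed back to a finer spectral analysis of $E_n=T_n^{-1/2}\dot T_n T_n^{-1/2}$; here no such refinement is needed and the proof is as direct as in the $\sigma$-case.
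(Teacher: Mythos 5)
Your proposal is correct and follows essentially the same route as the paper: verify $\opnorm{D_n}/\Fnorm{D_n}\to 0$ by combining Lemma~\ref{lem:D-op} with the Frobenius asymptotics of Lemma~\ref{lem:trace-D2}, invoke Lemma~\ref{lem:QF-CLT}, and then identify the variance scale from the leading $4\ln^2(1/\Delta_n)J_0$ term of \eqref{eq:trD2-asymp}. The only cosmetic difference is that you phrase the normalization via $\Var(R_{H,n})=\tfrac{\gamma_n^2}{2}\tr(D_n^2)$ while the paper works with $\gamma_n^2\Fnorm{D_n}^2$; these are the same computation.
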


\begin{proof}
Set $Q_n(D_n):=Z_{n}^\top D_n Z_{n}-\Tr(D_n)$. By \eqref{eq:trD2-asymp}, we have
\[
\Fnorm{D_n}^2=\Tr(D_n^2)\asymp n\Delta_n^{1-2p}\ln^2(1/\Delta_n).
\]
By Lemma~\ref{lem:D-op}, $\opnorm{D_n}=O(\gamma_n^{-1}\log n)$, hence
\[
\frac{\opnorm{D_n}}{\Fnorm{D_n}}
=O\!\left(\frac{1}{\sqrt{n\Delta_n}}\right)
=O\!\left(\frac{1}{\sqrt{\mathcal{T}_n}}\right)\to0.
\]
Therefore Lemma~\ref{lem:QF-CLT} yields
\[
\frac{Q_n(D_n)}{\sqrt{2}\Fnorm{D_n}}\Rightarrow \mathcal N(0,1).
\]
Since $R_{H,n}=\frac{\gamma_n}{2}Q_n(D_n)$, we get
\[
\frac{R_{H,n}}{\gamma_n\Fnorm{D_n}}\Rightarrow \mathcal N\!\left(0,\frac12\right).
\]
Finally, using $\gamma_n^2=\sigma^4\Delta_n^{2p}$ and the leading term in \eqref{eq:trD2-asymp},
 \begin{align*}
     \gamma_n^2\Fnorm{D_n}^2
&=
\sigma^4\Delta_n^{2p}\cdot \frac{n}{2\pi}\Delta_n^{1-2p}\Bigl(4\ln^2(1/\Delta_n)\,J_0+{4\ln(1/\Delta_n)\,J_1+ J_2}+o(\ln^2(1/\Delta_n))\Bigr) \\ 
&=
\frac{2\sigma^4}{\pi}\,\mathcal{T}_n\,\ln^2(1/\Delta_n)\Bigl(J_0+o(1)\Bigr),
 \end{align*}
which implies \eqref{eq:RH-clt-final-clean}.
\end{proof}

\begin{remark}\label{rem:RH-interfaces}
The finer terms involving $J_1$ and $J_2$ in \eqref{eq:trD2-asymp} will be used later (Section~4) for the orthogonalization step and
for identifying the full Fisher information matrix. The explicit scaling in \eqref{eq:RH-clt-final-clean} only needs the leading
$\ln^2(1/\Delta_n)$ contribution, hence only $J_0$ appears in the variance.
\end{remark}

\section{CLT for the projected score vector}\label{sec:projected-score-clt}
In Appendix \ref{app:two-M} we know that $S_{\sigma,n}$ and the remainder of $H$ (Standard quadratic formula of $C_n$ and $D_n$) are strongly dependent, the correlation coefficient tends to $1$ when $n \rightarrow \infty$. This result is far from the LAN property with non-singular matrix. In order to avoid this problem, we take the idea for the projection from $D_n$ to $C_n$. 
\subsection{Projection and exact orthogonality}
First, we give the exact definition of the projection: 
\begin{definition}\label{def:an-orth}
Let us define the deterministic projection coefficient
\begin{equation}\label{eq:an-def}
\mathbf{a}_n:=\frac{\tr(C_nD_n)}{\tr(C_n^2)}, 
\end{equation}
also the orthogonalized matrix $D_n^\perp:=D_n-\mathbf{a}_n C_n$ and the orthogonalized remainder
\begin{equation}\label{eq:RH-perp}
R_{H,n}^\perp:=\frac{\gamma_n}{2}\Big(Z_{n}^\top D_n^\perp Z_{n}-\tr(D_n^\perp)\Big)
=R_{H,n}-\frac{\sigma \mathbf{a}_n}{2}\,S_{\sigma,n}.
\end{equation}
By construction, $\Tr(C_nD_n^\perp)=0$.
\end{definition}

\begin{lemma}\label{lem:orthogonality}
With $\mathbf{a}_n$ and $R_{H,n}^\perp$ defined above,
\[
\Cov(S_{\sigma,n},R_{H,n}^\perp)=0.
\]
\end{lemma}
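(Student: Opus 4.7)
The plan is to reduce the covariance of the two centered quadratic forms to a single trace, and then to check that the projection coefficient $\mathbf{a}_n$ was chosen precisely to kill this trace. Concretely, I would start from the two representations
\[
S_{\sigma,n}=\tfrac{\gamma_n}{\sigma}\bigl(Z_n^\top C_nZ_n-\Tr(C_n)\bigr),
\qquad
R_{H,n}^\perp=\tfrac{\gamma_n}{2}\bigl(Z_n^\top D_n^\perp Z_n-\Tr(D_n^\perp)\bigr),
\]
together with the fact that $C_n$ and $D_n^\perp$ are real symmetric (they are sandwiches $A_n^{-1/2}(\cdot)A_n^{-1/2}$ of symmetric matrices) and $Z_n\sim\mathcal N(0,I_n)$.

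The main computational tool is the standard Gaussian identity: for any symmetric $A,B$ and $Z\sim\mathcal N(0,I_n)$,
\[
\Cov\bigl(Z^\top A Z,\,Z^\top B Z\bigr)=2\,\Tr(AB).
\]
Applying this with $A=C_n$, $B=D_n^\perp$ and pulling out the scalar prefactors yields
\[
\Cov(S_{\sigma,n},R_{H,n}^\perp)=\frac{\gamma_n}{\sigma}\cdot\frac{\gamma_n}{2}\cdot 2\Tr(C_nD_n^\perp)=\frac{\gamma_n^2}{\sigma}\,\Tr(C_nD_n^\perp).
\]

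It then remains to verify that $\Tr(C_nD_n^\perp)=0$. Using $D_n^\perp=D_n-\mathbf{a}_n C_n$ and linearity of the trace,
\[
\Tr(C_nD_n^\perp)=\Tr(C_nD_n)-\mathbf{a}_n\,\Tr(C_n^2)=\Tr(C_nD_n)-\frac{\Tr(C_nD_n)}{\Tr(C_n^2)}\,\Tr(C_n^2)=0,
\]
where the choice of $\mathbf{a}_n$ in \eqref{eq:an-def} is exactly the least-squares projection onto $C_n$ in the Hilbert--Schmidt inner product. This gives $\Cov(S_{\sigma,n},R_{H,n}^\perp)=0$, as desired.

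There is essentially no obstacle here: the statement is a direct algebraic identity once the Gaussian covariance formula is invoked, and $\mathbf{a}_n$ is defined precisely to enforce Hilbert--Schmidt orthogonality $\langle C_n,D_n^\perp\rangle_{\mathrm{HS}}=\Tr(C_nD_n^\perp)=0$. The only minor point to note is that $\Tr(C_n^2)>0$ for all $n$ large enough (so that $\mathbf{a}_n$ is well defined), which follows from Lemma~\ref{lem:trace-C2} since $J_0(H,\sigma)>0$ and $\Tr(C_n^2)\sim \tfrac{n}{2\pi}\Delta_n^{1-2p}J_0(H,\sigma)\to\infty$. The real work of the paper lies in the subsequent asymptotic analysis of $R_{H,n}^\perp$ (its variance, joint CLT with $S_{\sigma,n}$, and identification of $J_\perp=J_2-J_1^2/J_0$); the orthogonality here is the cheap algebraic step that makes that analysis possible.
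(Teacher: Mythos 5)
Your proof is correct and follows exactly the paper's own argument: apply Wick's identity $\Cov(Z^\top AZ, Z^\top BZ)=2\Tr(AB)$ to reduce the covariance to $\frac{\gamma_n^2}{\sigma}\Tr(C_nD_n^\perp)$, which vanishes by the definition of $\mathbf{a}_n$. Your additional remark that $\Tr(C_n^2)>0$ (so $\mathbf{a}_n$ is well defined) is a sensible detail the paper leaves implicit.
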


\begin{proof}
Write $S_{\sigma,n}=\frac{\gamma_n}{\sigma}(Z_{n}^\top C_nZ_{n}-\tr (C_n))$ and
$R_{H,n}^\perp=\frac{\gamma_n}{2}(Z_{n}^\top D_n^\perp Z_{n}-\tr D_n^\perp)$.
By Wick's identity for centered Gaussian quadratic forms,
\[
\Cov(S_{\sigma,n},R_{H,n}^\perp)
=\frac{\gamma_n}{\sigma}\cdot\frac{\gamma_n}{2}\cdot 2\tr(C_nD_n^\perp)
=\frac{\gamma_n^2}{\sigma}\Big(\tr(C_nD_n)-\mathbf{a}_n\tr(C_n^2)\Big)=0,
\]
by the definition of $\mathbf{a}_n$.
\end{proof}

\subsection{Asymptotics of the projection coefficient and the orthogonal Frobenius norm}
Before the expansion of $\mathbf{a}_n$, we first construct the trace approximation of the product $C_nD_n$:
\begin{lemma}\label{lem:trace-CD}
Under the expansions \eqref{eq:lowfreq-f}, \eqref{eq:lowfreq-df} and the approximation \eqref{eq:TA-CD}, we have
\begin{equation}\label{eq:trCD-asymp}
\tr(C_nD_n)
=
\frac{n}{2\pi}\Delta_n^{1-2p}\Bigl(2\ln(1/\Delta_n)\,J_0(H,\sigma)+J_1(H,\sigma)+o(1)\Bigr),
\end{equation}
where $J_0$ and $J_1$ are defined in \eqref{eq:J0} and \eqref{eq:J1}.
\end{lemma}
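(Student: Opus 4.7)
The plan is to mirror the strategy used for $\tr(C_n^2)$ in Lemma~\ref{lem:trace-C2} and for $\tr(D_n^2)$ in Lemma~\ref{lem:trace-D2}, but with the single logarithmic factor $b_n(x)$ replacing $b_n(x)^2$. I would start from \eqref{eq:TA-CD}, whose remainder is $o(n\Delta_n^{1-2p}\ln(1/\Delta_n))$, so it suffices to analyze the spectral integral
\[
I_{1,n}:=\int_{-\pi}^{\pi} g_n(\lambda)h_n(\lambda)\,d\lambda
=\int_{-\pi}^{\pi}\frac{f_H(\lambda)\dot f_H(\lambda)}{(1+\gamma_n f_H(\lambda))^2}\,d\lambda.
\]

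Next I would split $I_{1,n}=I_{1,n}^{(0)}+I_{1,n}^{(\infty)}$ at a small fixed $\eta\in(0,\pi)$. On $\{\eta<|\lambda|\le\pi\}$, both $f_H$ and $\dot f_H$ are bounded and $a_n\ge1$, so $I_{1,n}^{(\infty)}=O(1)=o(\Delta_n^{1-2p}\ln(1/\Delta_n))$ since $p>1/2$. For the low-frequency part I would change variables $\lambda=\Delta_n x$ and write, with the same $\kappa_n$ and $b_n$ as in the proof of Lemma~\ref{lem:trace-D2},
\[
I_{1,n}^{(0)}=\Delta_n\int_{|x|\le\eta/\Delta_n}\kappa_n(x)\,b_n(x)\,dx.
\]
From \eqref{eq:lowfreq-f} and \eqref{eq:lowfreq-df}, for each fixed $x\neq 0$,
\[
\kappa_n(x)=\Delta_n^{-2p}w(x)\bigl(1+o(1)\bigr),\qquad
b_n(x)=2\ln(1/\Delta_n)+\bigl(C_H-2\ln|x|\bigr)+o(1),
\]
so the integrand has the pointwise expansion
\[
\Delta_n\,\kappa_n(x)b_n(x)
=\Delta_n^{1-2p}w(x)\Bigl(2\ln(1/\Delta_n)+\bigl(C_H-2\ln|x|\bigr)\Bigr)
+o\!\bigl(\Delta_n^{1-2p}\ln(1/\Delta_n)\bigr).
\]

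The last step is to justify passing to the limit and extending integration to all of $\mathbb R$. I would invoke Lemma~\ref{lem:verify-GSzego} with $b=f_H$, $c=\dot f_H$ (so that $k(b)+k(c)=1$) to get the uniform bound
\[
\Delta_n|\kappa_n(x)b_n(x)|
\le C\,\Delta_n^{1-2p}w(x)\bigl(1+|\log\Delta_n|+|\log|x||\bigr),
\]
and the integrability of $w(x)$ and $w(x)|\log|x||$ (noted right after \eqref{eq:weight-w}) lets dominated convergence take over, with the truncation $|x|\le\eta/\Delta_n$ harmlessly relaxed to $\mathbb R$. Integrating the pointwise expansion and identifying the resulting constants with $J_0$ in \eqref{eq:J0} and $J_1$ in \eqref{eq:J1} gives
\[
I_{1,n}^{(0)}=\Delta_n^{1-2p}\Bigl(2\ln(1/\Delta_n)\,J_0+J_1+o\bigl(\ln(1/\Delta_n)\bigr)\Bigr),
\]
and combining with $I_{1,n}^{(\infty)}=o(\Delta_n^{1-2p}\ln(1/\Delta_n))$ and \eqref{eq:TA-CD} yields \eqref{eq:trCD-asymp}.

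The argument is essentially bookkeeping; the only delicate point, which already appeared in the proof of Lemma~\ref{lem:trace-D2}, is making sure the dominating function absorbs the logarithmic factor uniformly in $n$ after the scaling $\lambda=\Delta_n x$, so that the $o(1)$ terms in the pointwise expansion integrate to a genuine $o(1)$ (in the right scale) rather than accumulating at low frequencies. Since the weight $w$ decays like $|x|^{-2p}$ at infinity with $2p>1$ and the log-singularity at the origin is absorbed by $w(x)|\log|x||\in L^1$, this is exactly the situation handled in Lemma~\ref{lem:trace-D2}, and the same dominated-convergence justification transfers verbatim.
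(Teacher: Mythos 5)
Your overall route is the same as the paper's (same split at $\eta$, same change of variables $\lambda=\Delta_n x$, same factorization $\kappa_n(x)b_n(x)$, same dominated-convergence step), but your error bookkeeping is too coarse to prove the lemma as stated. Your final low-frequency estimate reads
\[
I_{1,n}^{(0)}=\Delta_n^{1-2p}\Bigl(2\ln(1/\Delta_n)\,J_0+J_1+o\bigl(\ln(1/\Delta_n)\bigr)\Bigr),
\]
whereas \eqref{eq:trCD-asymp} asserts an error of $o(1)$ inside the bracket. With an $o(\ln(1/\Delta_n))$ error the constant $J_1$ is swallowed and the statement becomes vacuous at exactly the level at which it is used later: $J_1$ determines $m=J_1/J_0$ in the projection coefficient $\mathbf{a}_n$ (Lemma~\ref{lem:an-expansion}) and hence $J_\perp=J_2-J_1^2/J_0$. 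The source of the loss is the cross term: writing $\kappa_n(x)=\Delta_n^{-2p}w(x)(1+o(1))$ and $b_n(x)=2\ln(1/\Delta_n)+q(x)+o(1)$, the product of the $o(1)$ in $\kappa_n$ with the $2\ln(1/\Delta_n)$ in $b_n$ is only $o(\ln(1/\Delta_n))$ pointwise, and dominated convergence alone cannot improve on that. This is precisely why the situation does \emph{not} transfer verbatim from Lemma~\ref{lem:trace-D2}: there the target precision is $o(\ln^2(1/\Delta_n))$ against a leading term of order $\ln^2(1/\Delta_n)$, so a relative $o(1)$ suffices, whereas here you must resolve the bracket down to an additive constant.

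The paper closes this gap with a quantitative step you omit: it writes $\kappa_n(x)=\Delta_n^{-2p}w(x)+r_n(x)$ and uses the $C^2$-regularity of $\ell_H$ (Lemma~\ref{lem:FH-fH}) to show that $\Delta_n\int_{|x|\le\eta/\Delta_n}|r_n(x)|\,\ln(1/\Delta_n)\,dx=o(\Delta_n^{1-2p})$, i.e.\ the rate at which $r_n$ vanishes is polynomial in $\Delta_n$ and therefore beats the extra logarithm. A parallel remark applies to your use of \eqref{eq:TA-CD} at face value: its stated remainder $o\bigl(n\Delta_n^{1-2p}\ln(1/\Delta_n)\bigr)$ also swallows $J_1$, and one must invoke the sharper bound $O(n\log n)=o\bigl(n\Delta_n^{1-2p}\bigr)$ actually delivered by Proposition~\ref{prop:GSzego-inv} (as the paper does implicitly; compare the remark in the proof of Lemma~\ref{lem:Dperp-Frob}). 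With these two refinements your argument becomes the paper's proof.
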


\begin{proof}
The proof is almost the same as in Lemma \ref{lem:trace-D2}. By \eqref{eq:TA-CD}, it suffices to analyze the integral
\[
I_{1,n}:=\int_{-\pi}^{\pi} g_n(\lambda)h_n(\lambda)\,d\lambda
=
\int_{-\pi}^{\pi}\frac{f_H(\lambda)\dot f_H(\lambda)}{(1+\gamma_n f_H(\lambda))^2}\,d\lambda.
\]
Fix a small $\eta\in(0,\pi)$ and split $I_{1,n}=I_{1,n}^{(0)}+I_{1,n}^{(\infty)}$ as in the proof of Lemma~\ref{lem:trace-C2}.

On the domain $\{\eta<|\lambda|\le\pi\}$, the functions $f_H$ and $\dot f_H$ are bounded, and $a_n(\lambda)\ge1$.
Thus, $|g_n(\lambda)h_n(\lambda)|\le C_\eta$ uniformly in $n$, implying $I_{1,n}^{(\infty)}=O(1)$.
Since $p>1/2$, $\Delta_n^{1-2p}\to\infty$, so $I_{1,n}^{(\infty)}=o(\Delta_n^{1-2p})$.

On $|\lambda|\le\eta$, use the change of variables $\lambda=\Delta_n x$:
\[
I_{1,n}^{(0)}=\Delta_n\int_{|x|\le \eta/\Delta_n}
\frac{f_H(\Delta_n x)\,\dot f_H(\Delta_n x)}{\bigl(1+\gamma_n f_H(\Delta_n x)\bigr)^2}\,dx.
\]
Define the auxiliary functions
\[
\kappa_n(x):=\frac{f_H(\Delta_n x)^2}{\bigl(1+\gamma_n f_H(\Delta_n x)\bigr)^2},
\qquad
b_n(x):=\frac{\dot f_H(\Delta_n x)}{f_H(\Delta_n x)}.
\]
Then the integral becomes
\begin{equation}\label{eq:I1-factor}
I_{1,n}^{(0)}=\Delta_n\int_{|x|\le \eta/\Delta_n}\kappa_n(x)\,b_n(x)\,dx.
\end{equation}

By \eqref{eq:lowfreq-f}, for each fixed $x\neq0$,
\[
\gamma_n f_H(\Delta_n x) \sim \sigma^2 c_H |x|^{-p},
\]
which implies (as shown in Lemma~\ref{lem:trace-C2})
\begin{equation}\label{eq:kappa-asymp}
\kappa_n(x)= \Delta_n^{-2p} w(x) \bigl(1+o(1)\bigr).
\end{equation}
Moreover, by the $C^2$-regularity of the low-frequency factor (Lemma~\ref{lem:FH-fH}), the convergence in \eqref{eq:kappa-asymp} is uniform on compact $x$-sets and yields an error of smaller order than $\Delta_n^{1-2p}$ after integration (in particular, $\Delta_n^2\ln(1/\Delta_n)\to0$). More precisely, write $\kappa_n(x)=\Delta_n^{-2p}w(x)+r_n(x)$ on $|x|\le \eta/\Delta_n$.
Then, by the $C^2$-regularity in Lemma~\ref{lem:FH-fH} and the same domination used in Lemma~\ref{lem:trace-C2},
\[
\Delta_n\int_{|x|\le \eta/\Delta_n} |r_n(x)|\,dx=o(\Delta_n^{1-2p}),
\qquad
\Delta_n\int_{|x|\le \eta/\Delta_n} |r_n(x)|\,\ln(1/\Delta_n)\,dx=o(\Delta_n^{1-2p}),
\]
since $\Delta_n^2\ln(1/\Delta_n)\to0$.

For the term $b_n(x)$, using \eqref{eq:lowfreq-df}, we have
\[
b_n(x)=\frac{\dot f_H(\Delta_n x)}{f_H(\Delta_n x)}
=
C_H-2\ln|\Delta_n x|+o(1)
=
2\ln(1/\Delta_n) + \bigl(C_H-2\ln|x|\bigr) + o(1).
\]
Combining \eqref{eq:kappa-asymp} and the expansion of $b_n(x)$, the integrand behaves as
\begin{equation}\label{eq:integrand1-pointwise}
\Delta_n\,\kappa_n(x)b_n(x)
=
\Delta_n^{1-2p}w(x)\Bigl(2\ln(1/\Delta_n)+C_H-2\ln|x|\Bigr)+o(\Delta_n^{1-2p}).
\end{equation}

By Lemma~\ref{lem:verify-GSzego}, the integrand is dominated by an integrable function up to a constant factor. Thus, we can apply the Dominated Convergence Theorem.
Integrating the leading terms over $\mathbb{R}$ (since $\eta/\Delta_n\to\infty$) yields:
\begin{align*}
I_{1,n}^{(0)}
&= \Delta_n^{1-2p} \left[ 2\ln(1/\Delta_n) \int_{\mathbb{R}} w(x)\,dx
+ \int_{\mathbb{R}} w(x)(C_H - 2\ln|x|) dx + o(1) \right] \\
&= \Delta_n^{1-2p} \Bigl( 2\ln(1/\Delta_n) J_0 + J_1 + o(1) \Bigr).
\end{align*}
Combining this with $I_{1,n}^{(\infty)}$ and inserting into \eqref{eq:TA-CD} proves \eqref{eq:trCD-asymp}.
\end{proof}

Now, we give the second-order expansion of $\mathbf{a}_n$:
\begin{lemma}\label{lem:an-expansion}
Assume the trace asymptotics \eqref{eq:trC2-asymp} and \eqref{eq:trCD-asymp}. Then
\begin{equation}\label{eq:an-2nd}
\mathbf{a}_n
=\frac{\tr(C_nD_n)}{\tr(C_n^2)}
=
2\ln\!\Big(\frac{1}{\Delta_n}\Big)+m(H,\sigma)+{o(1)},
\qquad m(H,\sigma)=\frac{J_1(H,\sigma)}{J_0(H,\sigma)}.
\end{equation}
\end{lemma}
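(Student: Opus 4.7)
The plan is to obtain the expansion as the ratio of the two trace asymptotics already established, being careful that the $o(1)$ error in the denominator could couple with the divergent leading term $2\ln(1/\Delta_n)$ of the numerator. Controlling this coupling is the only non-trivial point of the argument.

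First, I substitute the asymptotics from Lemma~\ref{lem:trace-C2} and Lemma~\ref{lem:trace-CD} into the definition~\eqref{eq:an-def}. Setting $K_n:=\tfrac{n}{2\pi}\Delta_n^{1-2p}$, this common prefactor cancels in the quotient and yields
\begin{equation*}
\mathbf{a}_n=\frac{2\ln(1/\Delta_n)\,J_0+J_1+\varepsilon_n^{(1)}}{J_0+\varepsilon_n^{(0)}},
\end{equation*}
where $\varepsilon_n^{(0)},\varepsilon_n^{(1)}=o(1)$ denote the trace remainders in Lemma~\ref{lem:trace-C2} and Lemma~\ref{lem:trace-CD} respectively.

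Next, I expand the denominator by the geometric series identity
$(J_0+\varepsilon_n^{(0)})^{-1}=J_0^{-1}\bigl(1-\varepsilon_n^{(0)}/J_0+O((\varepsilon_n^{(0)})^2)\bigr)$ and distribute across the numerator. This produces the leading term $2\ln(1/\Delta_n)$, the constant correction $J_1/J_0$, and three error terms: $\varepsilon_n^{(1)}/J_0=o(1)$, a quadratic piece of size $O((\varepsilon_n^{(0)})^2\ln n)$ which is $o(1)$, and the potentially dangerous cross-term $-\tfrac{2\ln(1/\Delta_n)\,\varepsilon_n^{(0)}}{J_0}$. For the last term to be $o(1)$, I need the sharper rate $\varepsilon_n^{(0)}=o(1/\ln n)$. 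A re-examination of the proof of Lemma~\ref{lem:trace-C2} shows this is automatic: the boundary piece $I_{0,n}^{(\infty)}=O(1)$ divided by $\Delta_n^{1-2p}$ contributes $O(\Delta_n^{2p-1})=O(n^{-\alpha(2p-1)})$, a polynomial rate in $n$; and the dominated-convergence error from the low-frequency zoom-in is likewise of polynomial order in $\Delta_n$ thanks to the $C^2$-regularity of $\ell_H$ from Lemma~\ref{lem:FH-fH}. Since $\ln(1/\Delta_n)=\alpha\ln n$ grows only logarithmically, the product $\ln(1/\Delta_n)\,\varepsilon_n^{(0)}$ still vanishes.

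The main obstacle, though fundamentally routine, is exactly this bookkeeping: one must promote the qualitative $o(1)$ statements of Lemma~\ref{lem:trace-C2} (and, by the same reasoning, of Lemma~\ref{lem:trace-CD}) to the quantitative $o(1/\ln n)$ bound needed to absorb the logarithmic blow-up sitting in the numerator. Once that refinement is in hand, all remainder contributions collapse to $o(1)$, and the expansion reads $\mathbf{a}_n=2\ln(1/\Delta_n)+J_1/J_0+o(1)$, identifying $m(H,\sigma)=J_1/J_0$ as claimed.
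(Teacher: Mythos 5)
Your proposal is correct and follows essentially the same route as the paper, which simply divides the asymptotic \eqref{eq:trCD-asymp} by \eqref{eq:trC2-asymp} to read off $2\ln(1/\Delta_n)+J_1/J_0+o(1)$. Your additional observation --- that the denominator's $o(1)$ error must in fact be $o(1/\ln n)$ to prevent the cross-term $\ln(1/\Delta_n)\cdot\varepsilon_n^{(0)}$ from spoiling the constant, and that this sharper rate is available because the remainders in Lemma~\ref{lem:trace-C2} are of polynomial order in $\Delta_n$ --- is a legitimate refinement of a point the paper's one-line proof passes over silently (though the paper does address the analogous $\ln$-coupling inside the proof of Lemma~\ref{lem:trace-CD}).
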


\begin{proof}
By \eqref{eq:an-def} and \eqref{eq:trC2-asymp}--\eqref{eq:trCD-asymp},
\[
\mathbf{a}_n
=
\frac{2\ln(1/\Delta_n)\,J_0+J_1+{o(1)}}{J_0+o(1)}
=
2\ln\!\Big(\frac{1}{\Delta_n}\Big)+\frac{J_1}{J_0}+{o(1)}.
\]
\end{proof}
We will obtain the Frobenius norm of the orthogonalized matrix:
\begin{lemma}\label{lem:Dperp-Frob}
Let $D_n^\perp=D_n-\mathbf{a}_n C_n$ with $\mathbf{a}_n$ defined by \eqref{eq:an-def}. Then
\begin{equation}\label{eq:Dperp-Frob}
\|D_n^\perp\|_F^2=\tr\big((D_n^\perp)^2\big)
=
\frac{n}{2\pi}\Delta_n^{1-2p}\Big(J_\perp+{o(1)}\Big),
\qquad
J_\perp:=J_2-\frac{J_1^2}{J_0}\in(0,\infty).
\end{equation}
\end{lemma}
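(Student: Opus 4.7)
The strategy is a straightforward algebraic computation that combines the three trace asymptotics of Lemmas~\ref{lem:trace-C2}, \ref{lem:trace-CD}, \ref{lem:trace-D2} with the expansion of $\mathbf{a}_n$ from Lemma~\ref{lem:an-expansion}. Expanding $(D_n^\perp)^2=(D_n-\mathbf{a}_nC_n)^2$ and taking traces gives
$$
\tr\bigl((D_n^\perp)^2\bigr)
=\tr(D_n^2)-2\mathbf{a}_n\tr(C_nD_n)+\mathbf{a}_n^2\tr(C_n^2).
$$
Using the very definition $\mathbf{a}_n=\tr(C_nD_n)/\tr(C_n^2)$, the cross and square terms collapse to $-\mathbf{a}_n\tr(C_nD_n)=-[\tr(C_nD_n)]^2/\tr(C_n^2)$, reducing the problem to the single identity
$$
\tr\bigl((D_n^\perp)^2\bigr)=\tr(D_n^2)-\frac{[\tr(C_nD_n)]^2}{\tr(C_n^2)}.
$$

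Next I would substitute the asymptotics with $L:=\ln(1/\Delta_n)$. All three traces share the prefactor $\frac{n}{2\pi}\Delta_n^{1-2p}$, so after factoring it out the calculation is entirely algebraic: expanding $(2LJ_0+J_1+o(1))^2=4L^2J_0^2+4LJ_0J_1+J_1^2+o(L^2)$ and dividing by $J_0+o(1)$ produces $4L^2J_0+4LJ_1+J_1^2/J_0+o(L^2)$. Subtracting this from the bracketed $\tr(D_n^2)$-asymptotic $4L^2J_0+4LJ_1+J_2+o(L^2)$, the divergent $L^2$ and $L$ contributions cancel \emph{exactly}, because their coefficients agree by the construction of $\mathbf{a}_n$, leaving the constant residue $J_2-J_1^2/J_0=J_\perp$ up to lower order terms.

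The main challenge is the bookkeeping of error terms through this subtraction: both contributions individually diverge like $L^2$, and the top two asymptotic levels must match on the nose for a finite residue to emerge. I would handle the quotient as $(A+o(1))^2/(B+o(1))$ with $A=2LJ_0+J_1$ and $B=J_0$, expanding $(1+o(1))^{-1}=1+o(1)$ and using $A^2=O(L^2)$ to bound the multiplicative error by $o(L^2)$, which matches the error in $\tr(D_n^2)$ from Lemma~\ref{lem:trace-D2}; after the exact cancellation of the divergent pieces the leading finite constant equals $J_\perp$, which is what the subsequent CLT needs. Strict positivity $J_\perp\in(0,\infty)$ follows from Cauchy--Schwarz: writing $\mu(dx)=w(x)\,dx/J_0$ as a probability measure on $\mathbb{R}$, $J_\perp/J_0$ is precisely the $\mu$-variance of $x\mapsto C_H-2\ln|x|$, which is strictly positive because this function is not $\mu$-a.s.\ constant, and finite because $w(x)\ln^2|x|\in L^1(\mathbb{R})$ by the integrability remark following \eqref{eq:weight-w}.
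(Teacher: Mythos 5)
Your algebraic starting point is the same as the paper's — the identity
\begin{equation*}
\tr\bigl((D_n^\perp)^2\bigr)=\tr(D_n^2)-\frac{[\tr(C_nD_n)]^2}{\tr(C_n^2)}
\end{equation*}
and the Cauchy--Schwarz/variance argument for $J_\perp>0$ are both correct and identical to what the paper does. However, the central step of your proof has a genuine gap: the error bookkeeping you describe does not close. Write $L=\ln(1/\Delta_n)\to\infty$ and $N=\frac{n}{2\pi}\Delta_n^{1-2p}$. Lemma~\ref{lem:trace-D2} gives $\tr(D_n^2)=N(4L^2J_0+4LJ_1+J_2+o(L^2))$, and your own computation of the quotient ends with an $o(L^2)$ error (the factor $(1+o(1))^{-1}$ from the denominator $J_0+o(1)$ multiplies the leading $4L^2J_0$ term, producing $4L^2J_0\cdot o(1)=o(L^2)$; the squared numerator likewise contributes $2(2LJ_0+J_1)\cdot o(1)=o(L)$). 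After the divergent $L^2$ and $L$ terms cancel, what remains is $N(J_2-J_1^2/J_0+o(L^2))$, and since $o(L^2)$ is \emph{not} $o(1)$, the residual constant $J_\perp$ is completely swamped. Your subtraction therefore only proves $\tr((D_n^\perp)^2)=N\cdot o(L^2)$, which identifies neither the order nor the constant. To make the naive substitution work one would need the bracketed errors in the three input lemmas to be $o(L^{-2})$, $o(L^{-1})$ and $o(1)$ respectively — far sharper than what Lemmas~\ref{lem:trace-C2}--\ref{lem:trace-D2} provide.

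The paper avoids this catastrophic cancellation by exploiting the structure of the combination \emph{before} integrating: at the level of the low-frequency integrals, $I_{2,n}^{(0)}-(I_{1,n}^{(0)})^2/I_{0,n}^{(0)}$ is a projection residual, hence invariant under shifting $b_n(x)$ by any constant. Since $b_n(x)=2L+q(x)+o(1)$ with $q(x)=C_H-2\ln|x|$, the divergent constant $2L$ drops out \emph{exactly} inside the variance functional, leaving $\Delta_n^{1-2p}\bigl(\int_{\mathbb R}w(x)(q(x)-m)^2\,dx+o(1)\bigr)$ with $m=J_1/J_0$, which equals $\Delta_n^{1-2p}(J_\perp+o(1))$. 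In other words, one must subtract the divergent multiple of $C_n$ from $D_n$ (equivalently, the constant $2L$ from $b_n$) while the quantities are still functions, and only then pass to the limit; once the three traces have been separately reduced to asymptotic expansions with $o(L^2)$-size errors, the information needed to extract $J_\perp$ is already lost. You should rework the proof along these lines, using the dominated-convergence machinery of Lemmas~\ref{lem:trace-CD} and~\ref{lem:trace-D2} applied directly to the integrand $\kappa_n(x)\bigl(b_n(x)-\mathbf{a}_n\bigr)^2$ rather than to the three traces individually.
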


\begin{proof}
Expand
\[
\tr\big((D_n-\mathbf{a}_n C_n)^2\big)=\tr(D_n^2)-2\mathbf{a}_n\tr(C_nD_n)+\mathbf{a}_n^2\tr(C_n^2).
\]
Using the definition $\mathbf{a}_n=\tr(C_nD_n)/\tr(C_n^2)$ in \eqref{eq:an-def}, this can be rewritten as
\begin{equation}\label{eq:Dperp-identity}
\tr\big((D_n^\perp)^2\big)=\tr(D_n^2)-\frac{\tr(C_nD_n)^2}{\tr(C_n^2)}.
\end{equation}
By the trace approximation bounds in Lemma~\ref{lem:TA}, the remainders in \eqref{eq:TA-C2}--\eqref{eq:TA-D2} are $o\!\big(n\Delta_n^{1-2p}\big)$, hence it suffices to analyze the corresponding integrals:
\[
I_{0,n}:=\int_{-\pi}^{\pi} g_n(\lambda)^2\,d\lambda,\qquad
I_{1,n}:=\int_{-\pi}^{\pi} g_n(\lambda)h_n(\lambda)\,d\lambda,\qquad
I_{2,n}:=\int_{-\pi}^{\pi} h_n(\lambda)^2\,d\lambda,
\]
where $g_n(\lambda)=f_H(\lambda)/(1+\gamma_n f_H(\lambda))$ and $h_n(\lambda)=\dot f_H(\lambda)/(1+\gamma_n f_H(\lambda))$.
Indeed,
\[
\tr(D_n^2)=\frac{n}{2\pi}I_{2,n}+o\!\big(n\Delta_n^{1-2p}\big),\qquad
\tr(C_nD_n)=\frac{n}{2\pi}I_{1,n}+o\!\big(n\Delta_n^{1-2p}\big),\qquad
\]
$$\tr(C_n^2)=\frac{n}{2\pi}I_{0,n}+o\!\big(n\Delta_n^{1-2p}\big).$$
Consequently, \eqref{eq:Dperp-identity} implies
\[
\tr\big((D_n^\perp)^2\big)=\frac{n}{2\pi}\left(I_{2,n}-\frac{I_{1,n}^2}{I_{0,n}}\right)+o\!\big(n\Delta_n^{1-2p}\big).
\]

Fix $\eta\in(0,\pi)$ and split each $I_{k,n}=I_{k,n}^{(0)}+I_{k,n}^{(\infty)}$ over $|\lambda|\le\eta$ and $\eta<|\lambda|\le\pi$.
On $\{\eta<|\lambda|\le\pi\}$, $f_H$ and $\dot f_H$ are bounded and $a_n(\lambda)\ge1$, so $I_{k,n}^{(\infty)}=O(1)$ for $k=0,1,2$,
therefore these contributions are $O(1)$ and thus negligible compared with the leading order $\Delta_n^{1-2p}$. Moreover, although $I_{1,n}$ enters through the ratio $I_{1,n}^2/I_{0,n}$, the high-frequency parts still remain negligible:
since $I_{0,n}^{(0)}\asymp \Delta_n^{1-2p}$ and $I_{1,n}^{(0)}=O(\Delta_n^{1-2p}\ln(1/\Delta_n))$,
a perturbation $I_{k,n}=I_{k,n}^{(0)}+O(1)$ changes $I_{1,n}^2/I_{0,n}$ by at most $O(\ln(1/\Delta_n))=o(\Delta_n^{1-2p})$.
Hence $I_{2,n}-I_{1,n}^2/I_{0,n}$ can be replaced by its low-frequency counterpart up to $o(\Delta_n^{1-2p})$.

On $|\lambda|\le\eta$, set $\lambda=\Delta_n x$ and use the notation of Lemma~\ref{lem:trace-CD}:
\[
\kappa_n(x):=\frac{f_H(\Delta_n x)^2}{(1+\gamma_n f_H(\Delta_n x))^2},\qquad
b_n(x):=\frac{\dot f_H(\Delta_n x)}{f_H(\Delta_n x)}.
\]
Then
\[
I_{0,n}^{(0)}=\Delta_n\int_{|x|\le\eta/\Delta_n}\kappa_n(x)\,dx,\quad
I_{1,n}^{(0)}=\Delta_n\int_{|x|\le\eta/\Delta_n}\kappa_n(x)\,b_n(x)\,dx,
\]
$$
I_{2,n}^{(0)}=\Delta_n\int_{|x|\le\eta/\Delta_n}\kappa_n(x)\,b_n(x)^2\,dx.$$
By Lemma~\ref{lem:an-expansion}, $\mathbf{a}_n=2\ln(1/\Delta_n)+m+o(1)$ with $m=J_1/J_0$, while
$b_n(x)=2\ln(1/\Delta_n)+q(x)+o(1)$ pointwise for fixed $x\neq0$, where $q(x)=C_H-2\ln|x|$.
Moreover, $\kappa_n(x)=\Delta_n^{-2p}w(x)(1+o(1))$ pointwise with $w$ given in \eqref{eq:weight-w}, and the integrands are dominated as in Lemma~\ref{lem:verify-GSzego}.
Therefore, by dominated convergence and $\eta/\Delta_n\to\infty$,
\[
I_{2,n}^{(0)}-\frac{(I_{1,n}^{(0)})^2}{I_{0,n}^{(0)}}
=
\Delta_n^{1-2p}\left(\int_{\mathbb R} w(x)\,(q(x)-m)^2\,dx+o(1)\right)
=\Delta_n^{1-2p}\big(J_\perp+o(1)\big).
\]
Combining the low- and high-frequency parts yields \eqref{eq:Dperp-Frob}. Positivity follows since $q$ is not $w$-a.e.\ constant.
\end{proof}

Now, we will try to get the structure of $\| \cdot \|_{\mathrm{op}}/\|\cdot\|_F\to0$ for $D_n^\perp$.
\begin{lemma}\label{lem:Dperp-ratio}
Fix $H\in(3/4,1)$ and set $p:=2H-1\in(1/2,1)$. Let $\Delta_n=n^{-\alpha}$ with $\alpha\in(0,1)$,
$\gamma_n=\sigma^2\Delta_n^{p}$ and $\mathcal T_n=n\Delta_n\to\infty$.
Recall $D_n^\perp=D_n-\mathbf{a}_n C_n$ with $\mathbf{a}_n$ defined by \eqref{eq:an-def}. Then
\[
\frac{\opnorm{D_n^\perp}}{\Fnorm{D_n^\perp}}\longrightarrow 0 .
\]
\end{lemma}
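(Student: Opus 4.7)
The plan is to reduce the statement to a direct computation using bounds already established in the paper, since the hard part (the precise leading-order Frobenius behavior and the operator-norm bound for $D_n$) has already been done.

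First, I would use the triangle inequality on $D_n^\perp=D_n-\mathbf{a}_n C_n$:
\[
\opnorm{D_n^\perp}\le \opnorm{D_n}+|\mathbf{a}_n|\,\opnorm{C_n}.
\]
Lemma~\ref{lem:D-op} gives $\opnorm{D_n}=O(\gamma_n^{-1}\log n)$. The bound $\opnorm{C_n}\le \gamma_n^{-1}$ was established inside the proof of Proposition~\ref{prop:sigma-clt} using the spectral representation $\lambda/(1+\gamma_n\lambda)\le\gamma_n^{-1}$. Finally, Lemma~\ref{lem:an-expansion} shows $\mathbf{a}_n=2\ln(1/\Delta_n)+m(H,\sigma)+o(1)=O(\log n)$, because $\Delta_n=n^{-\alpha}$. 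Combining these three bounds yields
\[
\opnorm{D_n^\perp}=O(\gamma_n^{-1}\log n).
\]

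Next I would read off the Frobenius norm from Lemma~\ref{lem:Dperp-Frob}, which gives $\Fnorm{D_n^\perp}^2=\frac{n}{2\pi}\Delta_n^{1-2p}(J_\perp+o(1))$ with $J_\perp\in(0,\infty)$, hence
\[
\Fnorm{D_n^\perp}\asymp \sqrt{n}\,\Delta_n^{(1-2p)/2}.
\]
Substituting $\gamma_n=\sigma^2\Delta_n^{p}$ and simplifying the exponent $-p-(1-2p)/2=-1/2$, I obtain
\[
\frac{\opnorm{D_n^\perp}}{\Fnorm{D_n^\perp}}
=O\!\left(\frac{\Delta_n^{-p}\log n}{\sqrt{n}\,\Delta_n^{(1-2p)/2}}\right)
=O\!\left(\frac{\log n}{\sqrt{n\Delta_n}}\right)
=O\!\left(\frac{\log n}{\sqrt{\mathcal T_n}}\right).
\]
Since $\mathcal T_n=n^{1-\alpha}$ with $\alpha\in(0,1)$, we have $\log n/\sqrt{\mathcal T_n}\to 0$, proving the claim.

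There is no real obstacle: everything reduces to combining Lemmas~\ref{lem:D-op}, \ref{lem:an-expansion} and~\ref{lem:Dperp-Frob}. The one thing to watch is whether the cancellation between $D_n$ and $\mathbf{a}_n C_n$ might need to be exploited at the operator-norm level; it does not, because the crude triangle-inequality bound $O(\gamma_n^{-1}\log n)$ is already small enough once divided by the Frobenius norm of size $\sqrt{n}\,\Delta_n^{(1-2p)/2}$. The operator/Frobenius ratio for $D_n^\perp$ matches, up to the logarithmic factor, the same $1/\sqrt{\mathcal T_n}$ rate that drove the CLTs for $S_{\sigma,n}$ and $R_{H,n}$, which is consistent and confirms that the projection step does not destroy the $\|\cdot\|_{\mathrm{op}}/\|\cdot\|_{F}\to 0$ structure needed to invoke Lemma~\ref{lem:QF-CLT} for $R_{H,n}^\perp$.
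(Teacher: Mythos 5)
Your proposal is correct and follows essentially the same route as the paper's own proof: triangle inequality combined with $\opnorm{D_n}=O(\gamma_n^{-1}\log n)$, $\opnorm{C_n}\le\gamma_n^{-1}$, $|\mathbf{a}_n|=O(\log n)$, and the Frobenius asymptotics of Lemma~\ref{lem:Dperp-Frob}, yielding the ratio $O(\log n/\sqrt{\mathcal T_n})\to 0$. Nothing further is needed.
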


\begin{proof}
From Lemma~\ref{lem:D-op} we have $\opnorm{C_n}\le 1/\gamma_n$ and $\opnorm{D_n}=O\big(\gamma_n^{-1}\log n\big)$. Now, by the triangle inequality,
\[
\opnorm{D_n^\perp}\le \opnorm{D_n}+|\mathbf{a}_n|\,\opnorm{C_n}
\le \opnorm{D_n}+\frac{|\mathbf{a}_n|}{\gamma_n}.
\]
From Lemma~\ref{lem:an-expansion} we have $\mathbf{a}_n=2\ln(1/\Delta_n)+O(1)+o(1)$, hence $|\mathbf{a}_n|=O(\log n)$ because $\Delta_n=n^{-\alpha}$.
Together with Lemma~\ref{lem:D-op} this yields
\[
\opnorm{D_n^\perp}=O\!\Big(\frac{\log n}{\gamma_n}\Big).
\]
On the other hand, Lemma~\ref{lem:Dperp-Frob} gives
\[
\Fnorm{D_n^\perp}^2=\frac{n}{2\pi}\Delta_n^{1-2p}\big(J_\perp+{o(1)}\big),
\qquad J_\perp>0,
\]
so $\Fnorm{D_n^\perp}\asymp \sqrt{n}\Delta_n^{\frac{1-2p}{2}}$.
Since $\gamma_n=\sigma^2\Delta_n^{p}$, there exists a constant $C>0$ such that
\[
\frac{\opnorm{D_n^\perp}}{\Fnorm{D_n^\perp}}
\le
C \frac{\Delta_n^{-p}\log n}{\sqrt{n}\,\Delta_n^{\frac{1-2p}{2}}}
=
C \frac{\log n}{\sqrt{n\Delta_n}}
=
C \frac{\log n}{\sqrt{\mathcal T_n}}
\longrightarrow 0,
\]
because $\mathcal T_n=n\Delta_n\to\infty$.
\end{proof}

\subsection{Joint CLT after multiplying by the rate matrix}

Define the lower-triangular matrices
\begin{equation}\label{eq: rate matrix}
M_n^{(1)}:=
\begin{pmatrix}
1 & 0\\
-\sigma\ln(\Delta_n) & 1
\end{pmatrix},
\quad
M_n^{(2)}:=
\begin{pmatrix}
1 & 0\\
-\frac{\sigma\mathbf{a}_n }{2} & 1
\end{pmatrix},
\quad
M_n:=M_n^{(2)}M_n^{(1)}
=
\begin{pmatrix}
1 & 0\\
-\sigma\ln(\Delta_n)-\frac{\sigma \mathbf{a}_n}{2} & 1
\end{pmatrix}.
\end{equation}
Recall from Lemma~\ref{lem:explicit-linear} that $S_{H,n}=\sigma\ln(\Delta_n)\,S_{\sigma,n}+R_{H,n}$.
Then
\[
M_n
\binom{S_{\sigma,n}}{S_{H,n}}
=
\binom{S_{\sigma,n}}{R_{H,n}^\perp}.
\]

\begin{proposition}\label{prop:projected-joint-clt}
Assume $\mathcal T_n=n\Delta_n\to\infty$. Define
\[
\Xi_n:=
\frac{1}{\sqrt{\mathcal T_n}}\,M_n
\binom{S_{\sigma,n}}{S_{H,n}}
=
\binom{\frac{S_{\sigma,n}}{\sqrt{\mathcal T_n}}}{\frac{R_{H,n}^\perp}{\sqrt{\mathcal T_n}}}.
\]
Then, as $n\to\infty$,
\[
\Xi_n\ \Rightarrow\ \mathcal N\!\left(0,\ I^\perp\right),
\qquad
I^\perp:=
\begin{pmatrix}
\frac{\sigma^2}{\pi}J_0(H,\sigma) & 0\\
0 & \frac{\sigma^4}{4\pi}J_\perp(H,\sigma)
\end{pmatrix},
\qquad
J_\perp=J_2-\frac{J_1^2}{J_0}>0.
\]
\end{proposition}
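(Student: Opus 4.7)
The plan is to establish the joint CLT via the Cramér–Wold device, reducing the bivariate convergence to the scalar CLT for a single quadratic form via the same $\opnorm{\cdot}/\Fnorm{\cdot}\to 0$ machinery used for the marginals. The key structural observation is that both components are centered quadratic forms in the same standard Gaussian vector $Z_n$: indeed, $S_{\sigma,n}=\frac{\gamma_n}{\sigma}Q_n(C_n)$ and $R_{H,n}^\perp=\frac{\gamma_n}{2}Q_n(D_n^\perp)$, so for any fixed $(u,v)\in\mathbb R^2$,
\[
u\,S_{\sigma,n}+v\,R_{H,n}^\perp=Q_n(N_n^{u,v}),\qquad
N_n^{u,v}:=\frac{u\gamma_n}{\sigma}C_n+\frac{v\gamma_n}{2}D_n^\perp.
\]
Thus a single application of Lemma~\ref{lem:QF-CLT} to $N_n^{u,v}/\sqrt{\mathcal T_n}$ will deliver the Cramér–Wold test, provided one verifies the operator-to-Frobenius ratio condition and identifies the limit variance.

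First I would compute the Frobenius norm of $N_n^{u,v}$ using the key exact orthogonality $\Tr(C_nD_n^\perp)=0$ guaranteed by Definition~\ref{def:an-orth}:
\[
\Fnorm{N_n^{u,v}}^2
=u^2\frac{\gamma_n^2}{\sigma^2}\Fnorm{C_n}^2
+v^2\frac{\gamma_n^2}{4}\Fnorm{D_n^\perp}^2,
\]
with no cross term. Substituting the trace asymptotics from Lemma~\ref{lem:trace-C2} and Lemma~\ref{lem:Dperp-Frob}, and recalling $\gamma_n^2=\sigma^4\Delta_n^{2p}$ so that $\gamma_n^2\Delta_n^{1-2p}\cdot n=\sigma^4\mathcal T_n$, I obtain
\[
\frac{2\Fnorm{N_n^{u,v}}^2}{\mathcal T_n}
\longrightarrow
u^2\,\frac{\sigma^2}{\pi}J_0+v^2\,\frac{\sigma^4}{4\pi}J_\perp,
\]
which is precisely the target variance $u^2 I^\perp_{11}+v^2 I^\perp_{22}$ (the absence of a cross term $2uv\,I^\perp_{12}$ reflecting, at the level of limits, the orthogonality established in Lemma~\ref{lem:orthogonality}).

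Second, I need the operator-to-Frobenius ratio condition. By the triangle inequality and the bounds $\opnorm{C_n}\le 1/\gamma_n$ (from the proof of Proposition~\ref{prop:sigma-clt}) and $\opnorm{D_n^\perp}=O(\gamma_n^{-1}\log n)$ (from Lemma~\ref{lem:Dperp-ratio}),
\[
\opnorm{N_n^{u,v}}\le \frac{|u|\gamma_n}{\sigma}\opnorm{C_n}+\frac{|v|\gamma_n}{2}\opnorm{D_n^\perp}=O(\log n).
\]
On the other hand, the Frobenius calculation above shows $\Fnorm{N_n^{u,v}}\asymp \sqrt{\mathcal T_n}$ for any $(u,v)\ne(0,0)$ since both $J_0>0$ and $J_\perp>0$. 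Hence $\opnorm{N_n^{u,v}}/\Fnorm{N_n^{u,v}}=O(\log n/\sqrt{\mathcal T_n})\to 0$, and Lemma~\ref{lem:QF-CLT} yields
\[
\frac{u\,S_{\sigma,n}+v\,R_{H,n}^\perp}{\sqrt{\mathcal T_n}}
\Rightarrow
\mathcal N\!\left(0,\ u^2\frac{\sigma^2}{\pi}J_0+v^2\frac{\sigma^4}{4\pi}J_\perp\right).
\]
The degenerate case $(u,v)=(0,0)$ is trivial. By the Cramér–Wold device this is equivalent to $\Xi_n\Rightarrow\mathcal N(0,I^\perp)$, and the factorization identity $M_n\,(S_{\sigma,n},S_{H,n})^\top=(S_{\sigma,n},R_{H,n}^\perp)^\top$ shows that $\Xi_n$ is indeed the vector displayed in the statement.

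The only subtle point, and the one I would write out most carefully, is the absence of the cross term in the Frobenius norm: it is an \emph{exact} (not asymptotic) identity, forced by the construction of $\mathbf a_n$, and this exactness is what allows the limiting covariance to be diagonal without any further compensation. Everything else is a direct composition of results already proved: the marginal for $S_{\sigma,n}$ appears in Proposition~\ref{prop:sigma-clt}; the marginal for $R_{H,n}^\perp$ follows from Lemmas~\ref{lem:Dperp-Frob}--\ref{lem:Dperp-ratio} exactly as in Proposition~\ref{prop:RH-clt}, except with $D_n^\perp$ replacing $D_n$ (and with the rate $\sqrt{\mathcal T_n}$, \emph{without} the extra $|\ln\Delta_n|$, since the projection removes the log-blow-up carried by the $J_0\ln^2(1/\Delta_n)$ term in $\tr(D_n^2)$).
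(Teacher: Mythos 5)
Your proposal is correct and follows essentially the same route as the paper's proof: a Cramér--Wold reduction to a single quadratic form $Q_n$ of a linear combination of $C_n$ and $D_n^\perp$, the exact identity $\Tr(C_nD_n^\perp)=0$ killing the Frobenius cross term, the operator-norm bounds $\opnorm{C_n}\le\gamma_n^{-1}$ and $\opnorm{D_n^\perp}=O(\gamma_n^{-1}\log n)$ to verify the $\opnorm{\cdot}/\Fnorm{\cdot}\to0$ condition, and Lemma~\ref{lem:QF-CLT}. The variance identification and the diagonal limit match the paper exactly, so no changes are needed.
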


\begin{proof}
By Proposition~\ref{prop:sigma-clt},
\[
\frac{S_{\sigma,n}}{\sqrt{\mathcal T_n}}\Rightarrow \mathcal N\!\left(0,\frac{\sigma^2}{\pi}J_0\right).
\]
Moreover,
\[
R_{H,n}^\perp=\frac{\gamma_n}{2}\Big(Z_{n}^\top D_n^\perp Z_{n}-\Tr(D_n^\perp)\Big).
\]
By Lemma~\ref{lem:Dperp-ratio} and Lemma~\ref{lem:QF-CLT},
\[
\frac{Z_{n}^\top D_n^\perp Z_{n}-\Tr(D_n^\perp)}{\sqrt{2}\Fnorm{D_n^\perp}}\Rightarrow \mathcal N(0,1).
\]
Using $\Fnorm{D_n^\perp}^2=\frac{n}{2\pi}\Delta_n^{1-2p}(J_\perp+o(1))$ and $\gamma_n=\sigma^2\Delta_n^p$ yields
\[
\frac{R_{H,n}^\perp}{\sqrt{\mathcal T_n}}\Rightarrow \mathcal N\!\left(0,\frac{\sigma^4}{4\pi}J_\perp\right).
\]
Now, we consider the covariance part. For any fixed $u=(u_1,u_2)\in\R^2$,
\[
u_1\frac{S_{\sigma,n}}{\sqrt{\mathcal T_n}}+u_2\frac{R_{H,n}^\perp}{\sqrt{\mathcal T_n}}
=
\frac{\gamma_n}{\sqrt{\mathcal T_n}}
\Big(Z_{n}^\top M_{n,u}Z_{n}-\Tr(M_{n,u})\Big),
\qquad
M_{n,u}:=u_1\sigma^{-1}C_n+u_2\tfrac12 D_n^\perp.
\]
Since $\Tr(C_nD_n^\perp)=0$ by construction, we have
\[
\Fnorm{M_{n,u}}^2
=
u_1^2\sigma^{-2}\Fnorm{C_n}^2+\frac{u_2^2}{4}\Fnorm{D_n^\perp}^2,
\]
and $\opnorm{M_{n,u}}\le \sigma^{-1}|u_1|\opnorm{C_n}+2^{-1}|u_2|\opnorm{D_n^\perp}$.
Since $\Fnorm{M_{n,u}}^2$ is a nonnegative sum of the two terms above, we obtain
$\opnorm{M_{n,u}}/\Fnorm{M_{n,u}}\to 0$ from $\opnorm{C_n}/\Fnorm{C_n}\to0$ (verified in Proposition~\ref{prop:sigma-clt})
and Lemma~\ref{lem:Dperp-ratio}. Applying Lemma~\ref{lem:QF-CLT} to $M_{n,u}$ yields a one-dimensional normal limit for every $u$,
hence joint convergence by Cram\'er--Wold.

Finally, Lemma~\ref{lem:orthogonality} gives $\Cov(S_{\sigma,n},R_{H,n}^\perp)=0$ for each $n$, so the limiting covariance is diagonal.
\end{proof}
\begin{remark}
In the matrix $M_n$, we can change $\mathbf{a_n}$ directly with $2\ln\!\Big(\frac{1}{\Delta_n}\Big)+\frac{J_1}{J_0}$ and obtain the same result.
\end{remark}

\section{LAN Property---Proof of Theorem \ref{thm:LAN-rank2}}\label{sec:LAN}
In this section, we combine the Central Limit Theorem for the projected scores (Proposition~\ref{prop:projected-joint-clt}) with the local expansion of the Gaussian likelihood to prove the main LAN theorem.
\subsection{Matrix Taylor Expansions}
To establish the Local Asymptotic Normality, we need to analyze the asymptotic behavior of the log-likelihood ratio. This ratio involves non-linear matrix functions of the covariance matrix, specifically the logarithm of the determinant and the matrix inverse.
To handle these terms, we rely on second-order Taylor expansions with respect to the perturbation matrix. The following lemma provides the fundamental matrix inequalities required to expand these terms and strictly control the remainder terms using the operator norm and the Frobenius norm.

\begin{lemma}\label{lem:matrix-expansions}
Let $S$ be a real symmetric $n\times n$ matrix with $\opnorm{S}\le \frac12$, we consider the Log-determinant and inverse of the matrix $I_n+S$ and have the following results: 

\begin{enumerate}
\item $\log\det(I_n+S)=\Tr(S)-\frac12\Tr(S^2)+R_{\log}(S)$, where the remainder satisfies $|R_{\log}(S)|\le C\,\opnorm{S}\,\Tr(S^2)$.

\item $(I_n+S)^{-1}=I_n-S+S^2+R_{\mathrm{inv}}(S) \qquad R_{\mathrm{inv}}(S):=\sum_{k\ge 3}(-1)^kS^k$,
and the remainder satisfies
\[
\opnorm{R_{\mathrm{inv}}(S)}\le C\,\opnorm{S}^3,
\qquad
\Tr\!\big(R_{\mathrm{inv}}(S)^2\big)\le C\,\Tr(S^6).
\]
\end{enumerate}
Here $C>0$ is a universal constant.
\end{lemma}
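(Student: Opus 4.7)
The plan is to reduce both claims to scalar Taylor expansions via the spectral decomposition of $S$, and then bound the remainders eigenvalue by eigenvalue.

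For part (1), since $S$ is real symmetric I would write $S=Q\Lambda Q^\top$ with $\Lambda=\mathrm{diag}(\lambda_1,\dots,\lambda_n)$ and $|\lambda_i|\le\opnorm{S}\le 1/2$. Then $\log\det(I_n+S)=\sum_i \log(1+\lambda_i)$, and the scalar Taylor series $\log(1+x)=x-\tfrac12 x^2+\sum_{k\ge3}(-1)^{k+1}x^k/k$ converges absolutely for $|x|\le 1/2$. Summing the first two terms over $i$ recovers $\Tr(S)-\tfrac12\Tr(S^2)$, so $R_{\log}(S)=\sum_i\sum_{k\ge3}(-1)^{k+1}\lambda_i^k/k$. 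I would dominate the inner tail by a geometric series, $\sum_{k\ge 3}|\lambda_i|^k/k\le |\lambda_i|^3/(3(1-|\lambda_i|))\le C|\lambda_i|^3$, and then pull out one factor of $|\lambda_i|\le\opnorm{S}$ from each cubic to obtain $|R_{\log}(S)|\le C\,\opnorm{S}\sum_i\lambda_i^2=C\,\opnorm{S}\,\Tr(S^2)$.

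For part (2), the Neumann expansion $(I_n+S)^{-1}=\sum_{k\ge0}(-1)^k S^k$ converges in operator norm since $\opnorm{S}\le 1/2<1$, and separating the first three terms gives the stated identity with $R_{\mathrm{inv}}(S)=\sum_{k\ge3}(-1)^kS^k$. The operator-norm bound is the geometric sum $\opnorm{R_{\mathrm{inv}}(S)}\le\sum_{k\ge3}\opnorm{S}^k=\opnorm{S}^3/(1-\opnorm{S})\le 2\opnorm{S}^3$. For the trace bound I would again pass to the eigenbasis of $S$: $R_{\mathrm{inv}}(S)$ is simultaneously diagonalized with eigenvalues $r(\lambda_i)=-\lambda_i^3/(1+\lambda_i)$, so $|r(\lambda_i)|\le 2|\lambda_i|^3$ on $|\lambda_i|\le 1/2$, and $\Tr\!\big(R_{\mathrm{inv}}(S)^2\big)=\sum_i r(\lambda_i)^2\le 4\sum_i\lambda_i^6=4\,\Tr(S^6)$.

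The argument is essentially routine scalar analysis once diagonalization reduces matrix functions to functions of eigenvalues; I do not anticipate a genuine obstacle. The only point requiring care is choosing each bound in the shape that will be useful in the subsequent LAN expansion: for the log-determinant we need the remainder in the form $\opnorm{S}\cdot\Tr(S^2)$ so that the $\opnorm{\cdot}/\Fnorm{\cdot}\to 0$ condition already verified for $C_n$ and $D_n^\perp$ will kill it; for the inverse we need \emph{both} an operator-norm bound (useful when $R_{\mathrm{inv}}(S)$ multiplies deterministic matrices under a trace) and a Hilbert--Schmidt bound via $\Tr(S^6)$ (useful when it enters a Gaussian quadratic form and one applies Wick's formula). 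The identity $\sum_i|\lambda_i|^3\le\opnorm{S}\sum_i\lambda_i^2$ is the simple but crucial step that converts a cubic tail into the product form the rest of the proof needs.
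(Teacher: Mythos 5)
Your proposal is correct and follows essentially the same route as the paper: diagonalize $S$, apply the scalar Taylor/Neumann expansions to the eigenvalues, and convert the cubic tails into the product forms $\opnorm{S}\,\Tr(S^2)$ and $\Tr(S^6)$ (the paper phrases the trace bound for $R_{\mathrm{inv}}$ via the factorization $R_{\mathrm{inv}}(S)=S^3(I_n+S)^{-1}$, which is the operator-level version of your eigenvalue computation $r(\lambda)=-\lambda^3/(1+\lambda)$). No gaps.
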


\begin{proof}
Let $\lambda_1,\dots,\lambda_n$ be the eigenvalues of $S$. Since $S$ is symmetric and $\opnorm{S}\le 1/2$,
we have $|\lambda_i|\le 1/2$ for all $i$.

\smallskip
\noindent\textbf{(1) Log-determinant.}
For $|x|\le 1/2$, Taylor's theorem gives
\[
\log(1+x)=x-\frac12x^2 + x^3\rho(x)
\quad\text{with}\quad |\rho(x)|\le C.
\]
Summing over eigenvalues yields
\[
\log\det(I_n+S)=\sum_{i=1}^n\log(1+\lambda_i)
=\Tr(S)-\frac12\Tr(S^2)+\sum_{i=1}^n \lambda_i^3\rho(\lambda_i).
\]
Therefore,
\[
|R_{\log}(S)|
\le C\sum_{i=1}^n |\lambda_i|^3
\le C\,\max_i|\lambda_i| \sum_{i=1}^n \lambda_i^2
\le C\,\opnorm{S}\,\Tr(S^2).
\]

\smallskip
\noindent\textbf{(2) Inverse.}
For $|x|\le 1/2$,
\[
(1+x)^{-1}=1-x+x^2+\sum_{k\ge 3}(-1)^k x^k.
\]
By functional calculus this gives
\[
(I_n+S)^{-1}=I_n-S+S^2+\sum_{k\ge 3}(-1)^kS^k,
\]
so the stated expression for $R_{\mathrm{inv}}(S)$ holds. Moreover,
\[
\opnorm{R_{\mathrm{inv}}(S)}
\le \sum_{k\ge 3}\opnorm{S}^k
=\frac{\opnorm{S}^3}{1-\opnorm{S}}
\le 2\,\opnorm{S}^3.
\]
Since $R_{\mathrm{inv}}(S)$ is a polynomial in $S$, it commutes with $S$ and we may write
$R_{\mathrm{inv}}(S)=S^3Q(S)$ with $Q(S):=\sum_{k\ge 0}(-1)^k S^k=(I_n+S)^{-1}$.
Hence
\[
R_{\mathrm{inv}}(S)^2 = Q(S)^2 S^6,
\qquad
\Tr\!\big(R_{\mathrm{inv}}(S)^2\big)=\Tr\!\big(Q(S)^2S^6\big)
\le \opnorm{Q(S)}^2\,\Tr(S^6).
\]
Finally, $\opnorm{Q(S)}=\opnorm{(I_n+S)^{-1}}\le (1-\opnorm{S})^{-1}\le 2$, which yields the claimed bound.
\end{proof}

\subsection{Final Proof of Theorem~\ref{thm:LAN-rank2}}

Fix $h\in\R^2$ and set $\theta_h:=(\sigma_{h}, H_{h})=\theta+h r_n^{-1}$. Write $V:=V_n(\theta)$ and $V_h:=V_n(\theta_h)$.
Under $\mathbb P_{\theta}$, the whitened vector $Z_{n}:=V^{-1/2}X_n$ is $\mathcal{N}(0,I_n)$.
Define the symmetric perturbation matrix
\[
S_n(h):=V^{-1/2}(V_h-V)V^{-1/2}.
\]
A standard Gaussian likelihood identity gives the exact log-likelihood ratio representation
\begin{equation}\label{eq:LLR-S}
\ell_n(\theta_h)-\ell_n(\theta)
=
-\frac12\log\det(I_n+S_n(h))
-\frac12\,Z_{n}^\top\Big((I_n+S_n(h))^{-1}-I_n\Big)Z_{n}.
\end{equation}

\smallskip
First, we verify the condition required for the matrix expansion, namely that the operator norm of the perturbation matrix $S_n(h)$ vanishes asymptotically.
Recall $V_n(\theta)=\Delta_nA_n(\theta)$ with $A_n(\theta):=I_n+\gamma_n(\theta)T_n(H)$, hence
$V^{-1/2}=\Delta_n^{-1/2}A_n(\theta)^{-1/2}$ and
\[
S_n(h)=A_n(\theta)^{-1/2}\big(A_n(\theta_h)-A_n(\theta)\big)A_n(\theta)^{-1/2}.
\]
Since $r_n^{-\top}=(1/\sqrt{\mathcal T_n})M_n$ and, by Lemma~\ref{lem:an-expansion}, the matrix $M_n$ is bounded (recall $\mathbf{a}_n$ involves $\ln(1/\Delta_n)$ which cancels with the scaling), we have $\sigma_h-\sigma=O(\mathcal T_n^{-1/2})$ and {$H_h-H=O(\mathcal T_n^{-1/2})$}.

A first-order Taylor expansion yields
\[
A_n(\theta_h)-A_n(\theta)
=
(\gamma_h-\gamma)T_n(H)+\gamma(H_h-H)\dot T_n(H)+  R_{A,n},
\]
where {$\gamma_{h}:=\gamma(\theta_{h})$} and $ R_{A,n}$ collects the second-order Taylor terms. Moreover, this remainder is negligible after sandwiching:
using $\|\ddot T_n(H)\|_{\op}=O(\log^2 n)$ (coming from the $\log^2|2\sin(\lambda/2)|$ singularity in $\partial_H^2\log f_H$) and
 {$H_h-H=O\big(\mathcal T_n^{-1/2}|\log\Delta_n|^{-1}\big)$, since $|\log\Delta_n|\asymp \log n$ and $\mathcal T_n\to\infty$), we obtain
$$\|A_n(\theta)^{-1/2}  R_{A,n}A_n(\theta)^{-1/2}\|_{\op}=o(1).$$
Sandwiching by $A_n(\theta)^{-1/2}$ gives
\[
S_n(h)
=
(\gamma_h-\gamma)C_n(\theta)+\gamma(H_h-H)D_n(\theta)+R_{S,n},
\]
with 
$$
C_n(\theta)=A_n(\theta)^{-1/2}T_n(H)A_n(\theta)^{-1/2},\,D_n(\theta)=A_n(\theta)^{-1/2}\dot T_n(H)A_n(\theta)^{-1/2}
$$
and $\|R_{S,n}\|_{\op}=o(1)$ uniformly for fixed $h$. Using $\opnorm{C_n(\theta)}\le 1/\gamma$ and $\opnorm{D_n(\theta)}=O(\gamma^{-1}\log n)$ (by Lemma~\ref{lem:D-op}),
together with $(\gamma_h-\gamma)=O(\gamma\mathcal T_n^{-1/2})$ and $H_h-H=O\big(\mathcal T_n^{-1/2}|\log\Delta_n|^{-1}\big)$,
we obtain
\[
\opnorm{(\gamma_h-\gamma)C_n(\theta)}=O(\mathcal T_n^{-1/2}),
\qquad
\opnorm{\gamma(H_h-H)D_n(\theta)}=O(\mathcal T_n^{-1/2}\log n),
\]
and the second-order remainder satisfies $\opnorm{R_{S,n}}=o(1)$.
Since $\mathcal T_n=n\Delta_n=n^{1-\alpha}$ and $\log n/\sqrt{\mathcal T_n}\to 0$, it follows that $\opnorm{S_n(h)}\to 0$.

\smallskip
Next, having established that $\opnorm{S_n(h)} \to 0$, we apply Lemma~\ref{lem:matrix-expansions} to obtain a second-order expansion of the log-likelihood ratio \eqref{eq:LLR-S}.
For $n$ large enough we have $\opnorm{S_n(h)}\le 1/2$, hence
\begin{align}
\ell_n(\theta_h)-\ell_n(\theta)
&=
\frac12\big(Z_{n}^\top S_n(h) Z_{n}-\Tr S_n(h)\big)
-\frac14\Tr\!\big(S_n(h)^2\big)\nonumber\\
&\quad
-\frac12\big(Z_{n}^\top S_n(h)^2 Z_{n}-\Tr(S_n(h)^2)\big)
-\frac12R_{\log}(S_n(h))
-\frac12 Z_{n}^\top R_{\mathrm{inv}}(S_n(h))Z_{n}.
\label{eq:LLR-expansion}
\end{align}

\smallskip
Then, we identify the limits of the principal linear and quadratic terms in this expansion.
Let $\delta_n:=\theta_h-\theta=h r_n^{-1}$ (viewed as a vector perturbation). By the Fr\'echet differentiability of $\theta\mapsto V_n(\theta)$,
\[
S_n(h)=\sum_{i=1}^2 \delta_{n,i}\,M_{i,n}+R_{n},
\qquad
M_{i,n}:=V^{-1/2}\,\partial_{\theta_i}V_n(\theta)\,V^{-1/2},
\]
where the remainder satisfies $\Tr(R_n^2)=o(1)$ (the Taylor remainder is quadratic in $\delta_n$, and after sandwiching its
Frobenius norm is $O(\|\delta_n\|^2)$; since $\|\delta_n\|=O(\mathcal T_n^{-1/2})$, this gives $\Tr(R_n^2)=O(\mathcal T_n^{-1})\to 0$).
Therefore,
\[
\frac12\big(Z_{n}^\top S_n(h)Z_{n}-\Tr S_n(h)\big)
=
\sum_{i=1}^2 \delta_{n,i}\,\partial_{\theta_i}\ell_n(\theta)
+\frac12\big(Z_{n}^\top R_n Z_{n}-\Tr (R_{n}) \big).
\]
The last term is $o_{\mathbb P}(1)$ since $\Var(Z_{n}^\top R_n Z_{n}-\Tr (R_n))=2\Tr(R_n^2)=o(1)$.
Hence, using $\delta_n^\top \nabla \ell_n(\theta) = h^\top (r_n^{-\top} \nabla \ell_n(\theta))$, we have
\begin{equation}\label{eq:linear-term}
\frac12\big(Z_{n}^\top S_n(h)Z_{n}-\Tr S_n(h)\big)
=
\delta_n^\top\nabla\ell_n(\theta)+o_{\mathbb P}(1)
=
h^\top\Xi_n+o_{\mathbb P}(1),
\end{equation}
where $\Xi_n=r_n^{-\,\top}\nabla\ell_n(\theta)$.

Next, using $\Tr((A+B)^2)=\Tr(A^2)+2\Tr(AB)+\Tr(B^2)$ and Cauchy--Schwarz,
\[
\Tr(S_n(h)^2)=\Tr\!\Big(\Big(\sum_{i=1}^2\delta_{n,i}M_{i,n}\Big)^2\Big)+o(1).
\]
In the Gaussian covariance model one has the identity (Wick formula)
\[
\Cov_{\theta}\big(\partial_{\theta_i}\ell_n(\theta),\partial_{\theta_j}\ell_n(\theta)\big)
=\frac12\Tr(M_{i,n}M_{j,n})=: \mathcal I_{ij,n}(\theta),
\]
so $\mathcal I_n(\theta)=(\mathcal I_{ij,n}(\theta))_{i,j}$ is both the Fisher information and the covariance of the score.
Therefore,
\[
\frac14\Tr(S_n(h)^2)
=
\frac12\,\delta_n^\top \mathcal I_n(\theta)\,\delta_n+o(1)
=
\frac12\,h^\top\Big(r_n^{-\,\top}\mathcal I_n(\theta)r_n^{-1}\Big)h+o(1).
\]
Moreover, in the Gaussian covariance model $\Cov_{\theta}(\nabla\ell_n(\theta))=\mathcal I_n(\theta)$, and the same trace asymptotics used in Proposition~\ref{prop:projected-joint-clt} yield
$r_n^{-\,\top}\mathcal I_n(\theta)r_n^{-1}\to I^\perp$.
Consequently,
\begin{equation}\label{eq:TrS2-limit}
\frac14\Tr(S_n(h)^2)=\frac12\,h^\top I^\perp h+o(1),
\qquad\text{in particular }\Tr(S_n(h)^2)=O(1).
\end{equation}

\smallskip
Finally, we control the remaining high-order error terms in \eqref{eq:LLR-expansion} and show they converge to zero in probability.
Using $\Var(Z_{n}^\top A Z_{n}-\Tr (A))=2\Tr(A^2)$ and $\Tr(S^4)\le \opnorm{S}^2\Tr(S^2)$, we obtain
\[
\Var\big(Z_{n}^\top S_n(h)^2 Z_{n}-\Tr(S_n(h)^2)\big)=2\Tr(S_n(h)^4)
\le 2\,\opnorm{S_n(h)}^2\,\Tr(S_n(h)^2)\to 0,
\]
by the results established above, hence $Z_{n}^\top S_n(h)^2 Z_{n}-\Tr(S_n(h)^2)=o_{\mathbb P}(1)$.
Moreover, Lemma~\ref{lem:matrix-expansions} and \eqref{eq:TrS2-limit} give
$$
|R_{\log}(S_n(h))|
\le C\,\opnorm{S_n(h)}\,\Tr(S_n(h)^2)=o(1).
$$
Also, write $Z_{n}^\top R_{\mathrm{inv}}(S_n(h))Z_{n}=\Tr(R_{\mathrm{inv}}(S_n(h)))+\big(Z_{n}^\top R_{\mathrm{inv}}(S_n(h))Z_{n}-\Tr(R_{\mathrm{inv}}(S_n(h)))\big)$.
Since $R_{\mathrm{inv}}(S)=\sum_{k\ge 3}(-1)^kS^k$, we have
$|\Tr(R_{\mathrm{inv}}(S))|\le \sum_{k\ge 3}|\Tr(S^k)|\le C\,\opnorm{S}\Tr(S^2)$, hence $|\Tr(R_{\mathrm{inv}}(S_n(h)))|=o(1)$.
Furthermore,
\begin{align*}
    \Var\big(Z_{n}^\top R_{\mathrm{inv}}(S_n(h))Z_{n}-\Tr(R_{\mathrm{inv}}(S_n(h)))\big)
&=2\Tr(R_{\mathrm{inv}}(S_n(h))^2)\\
&\le C\,\Tr(S_n(h)^6)
\le C\,\opnorm{S_n(h)}^4\,\Tr(S_n(h)^2)\to 0,
\end{align*}

so $Z_{n}^\top R_{\mathrm{inv}}(S_n(h))Z_{n}=o_{\mathbb P}(1)$.

\smallskip
Combining \eqref{eq:LLR-expansion}, \eqref{eq:linear-term} and \eqref{eq:TrS2-limit} yields
\[
\ell_n(\theta+h r_n^{-1})-\ell_n(\theta)
=
h^\top\Xi_n-\frac12 h^\top I^\perp h+o_{\mathbb P}(1),
\]
as claimed.
\begin{remark}[Equivalent LAN in the original $(\sigma,H)$-coordinates]\label{rem:LAN-original}
LAN is invariant under deterministic invertible linear reparametrizations of the local parameter.
Let $L$ be any fixed invertible $2\times2$ matrix and define
\[
\Xi_n^{(L)}:=L^\top \Xi_n,\qquad r_n^{(L)}:=r_nL^{-1},\qquad I^{(L)}:=L^\top I^\perp L.
\]
Then the LAN expansion holds equivalently with $(\Xi_n^{(L)},I^{(L)},r_n^{(L)})$.
In particular, choosing $L$ to map the orthogonalized coordinates back to the original parameter basis
typically yields a symmetric but non-diagonal information matrix, matching standard continuous-time LAN presentations.
\end{remark}

\begin{remark}
Since $\mathbf{a}_n=2\ln(1/\Delta_n)+m(H,\sigma)+o(1)$ presented in \eqref{eq:an-2nd}, in this theorem we can replace $\mathbf{a}_n$ by $\tilde {\mathbf{a}}_n:=2\ln(1/\Delta_n)+m(H,\sigma)$ in the rate matrix $r_n$
(without changing the LAN expansion), because the induced change in the local perturbation
$\theta_0+r_n^{-1}h$ is $o(\mathcal T_n^{-1/2})$.
\end{remark}

\section{Simulation Study}\label{sec:simulation}

We illustrate the asymptotic behavior of the rate-matrix transformed score
\[
\Xi_n
:=\frac{1}{\sqrt{\mathcal T_n}}\,M_n\,\nabla\ell_n(\theta_0)
=
\binom{S_{\sigma,n}/\sqrt{\mathcal T_n}}{R_{H,n}^\perp/\sqrt{\mathcal T_n}},
\qquad \mathcal T_n:=n\Delta_n,
\]
predicted by the projected-score CLT (Proposition~\ref{prop:projected-joint-clt}) and the LAN theorem (Theorem~\ref{thm:LAN-rank2}).
Throughout we fix $(H,\sigma)=(0.80,1.00)$ and use the explicit integral constants $J_0(H,\sigma)$ and
$J_\perp(H,\sigma)$ (Appendix~\ref{app:J012-closed}).

\subsection{Projected (full-rank) limit}
Proposition~\ref{prop:projected-joint-clt} yields the diagonal limiting covariance
\[
I^\perp=
\begin{pmatrix}
\frac{\sigma^2}{\pi}J_0(H,\sigma) & 0\\[1mm]
0 & \frac{\sigma^4}{4\pi}J_\perp(H,\sigma)
\end{pmatrix}.
\]
For $(H,\sigma)=(0.80,1.00)$ we compute
\[
J_0=0.2820,\quad J_\perp=34.1772,
\quad
\Var(\Xi_{\sigma})=\frac{\sigma^2}{\pi}J_0=0.0897,
\quad
\Var(\Xi_{H}^{\perp})=\frac{\sigma^4}{4\pi}J_\perp=2.7197.
\]
The simulated joint distribution of $\Xi_n$ is well-approximated by the corresponding bivariate Gaussian, as shown below.

\begin{figure}[H]
\centering
\includegraphics[width=0.56\textwidth]{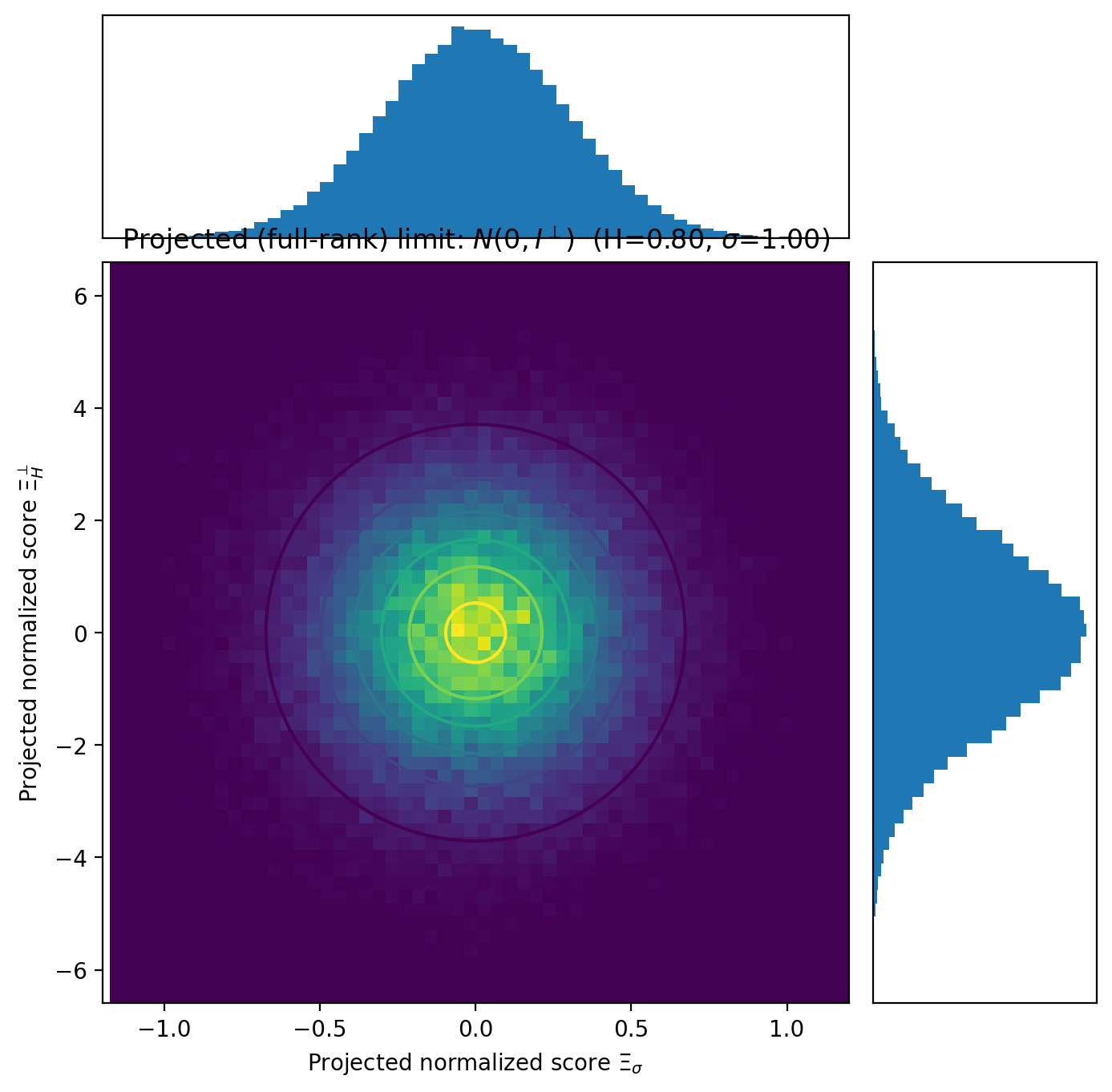}
\caption{Projected/orthogonalized score $\Xi_n$ (simulation) with theoretical Gaussian contours.}
\label{fig:sim-proj-2d}
\end{figure}

\begin{figure}[H]
\centering
\includegraphics[width=0.56\textwidth]{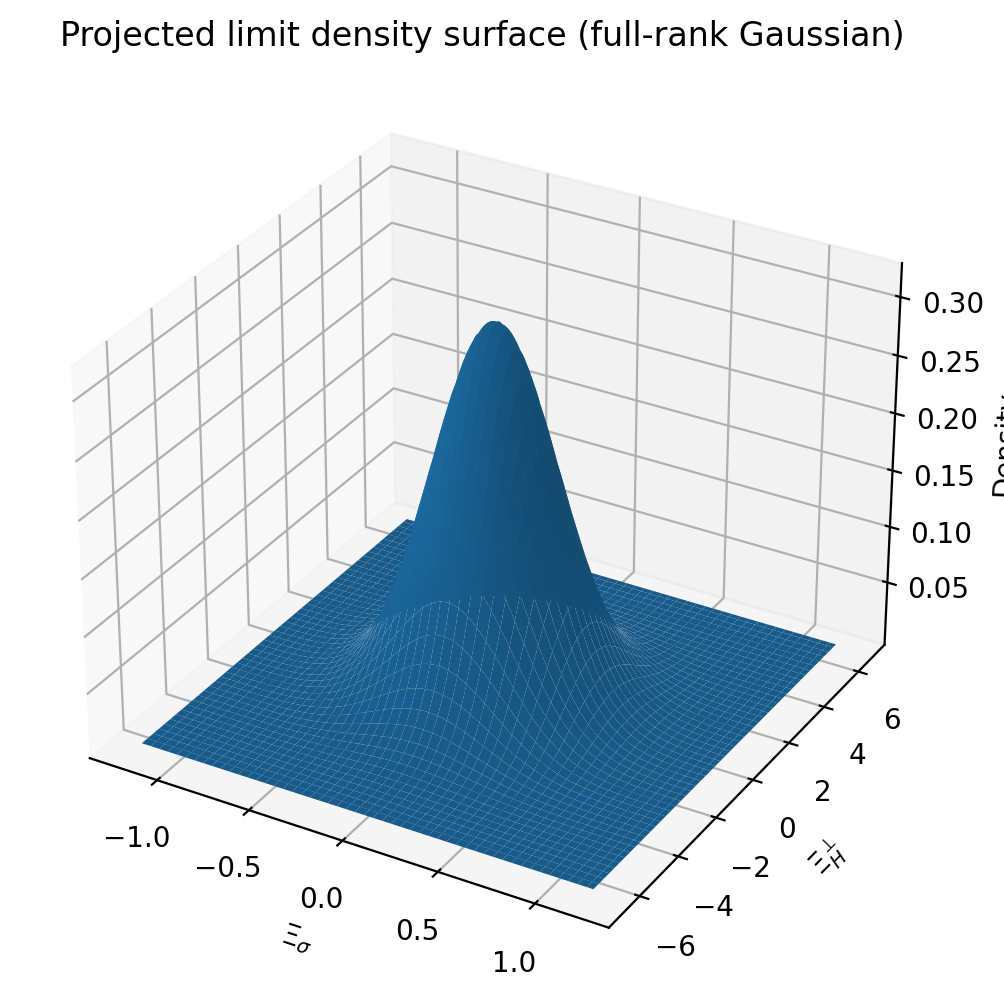}
\caption{Limiting Gaussian surface density of the projected score.}
\label{fig:sim-proj-3d}
\end{figure}

\subsection{Why projection is necessary (rank-$1$ degeneracy without it).}
If one only removes the explicit linear term in the $H$-score but does not project $D_n$ onto the orthogonal complement of $C_n$,
the limiting covariance becomes singular (rank $1$), as explained in Appendix~\ref{app:two-M}. This degeneracy is visualized below.

\begin{figure}[H]
\centering
\includegraphics[width=0.56\textwidth]{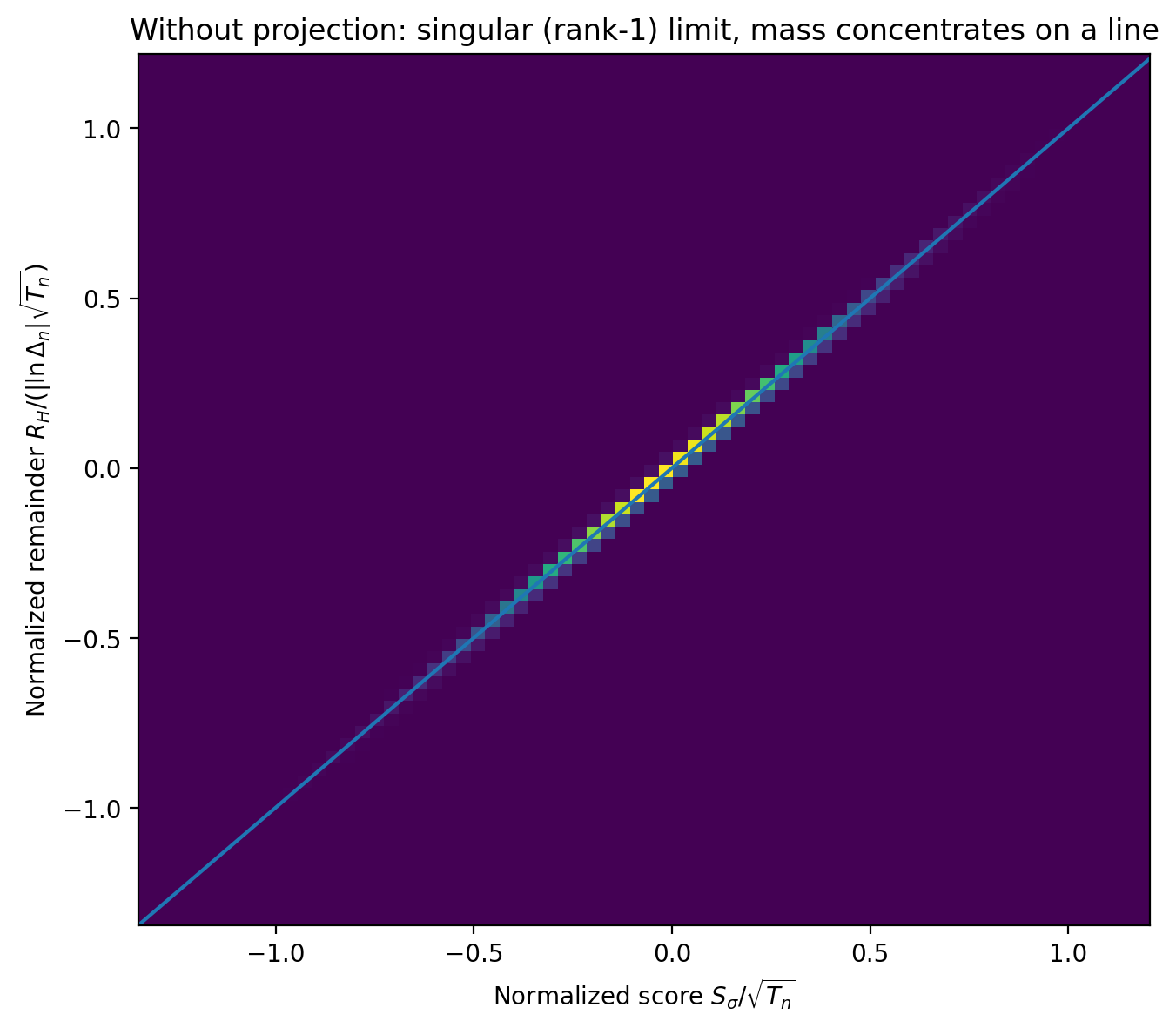}
\caption{Without projection: singular (rank-$1$) limit; the normalized pair concentrates near a line.}
\label{fig:sim-rank1}
\end{figure}

\appendix
\section{Closed-form expressions for $J_0,J_1,J_2$}\label{app:J012-closed}
\setcounter{equation}{0} 
\renewcommand{\theequation}{A.\arabic{equation}} 
Recall the weight function (see \eqref{eq:weight-w})
\[
w(x):=\left(\frac{c_H|x|^{-p}}{1+\sigma^2c_H|x|^{-p}}\right)^2,
\qquad p=2H-1\in(1/2,1),
\]
and the constants (see \eqref{eq:J0}--\eqref{eq:J2})
\begin{align*}
J_0(H,\sigma)&:=\int_{\R} w(x)\,dx,\\
J_1(H,\sigma)&:=\int_{\R} w(x)\Bigl(C_H-2\ln|x|\Bigr)\,dx,\\
J_2(H,\sigma)&:=\int_{\R} w(x)\Bigl(C_H-2\ln|x|\Bigr)^2\,dx.
\end{align*}
Throughout this appendix, set
\[
A:=\sigma^2 c_H>0,\qquad L_A:=\ln A.
\]

\begin{lemma}\label{lem:master-integral}
Define, for $r$ in a neighborhood of $0$,
\[
I(r):=\int_0^\infty \left(\frac{A\,x^{-p}}{1+A\,x^{-p}}\right)^2 x^{r}\,dx
=\int_0^\infty \frac{A^2 x^{r-2p}}{(1+A x^{-p})^2}\,dx.
\]
Then
\begin{equation}\label{eq:I-gamma}
I(r)=\frac{1}{p}\,A^{(r+1)/p}\,
\Gamma\!\left(\frac{r+1}{p}\right)\Gamma\!\left(2-\frac{r+1}{p}\right),
\end{equation}
whenever $0<\frac{r+1}{p}<2$. Moreover,
\[
I'(0)=\int_0^\infty w(x)\,\ln x\,dx,\qquad
I''(0)=\int_0^\infty w(x)\,(\ln x)^2\,dx.
\]
\end{lemma}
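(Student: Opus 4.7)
The plan is to reduce the improper integral to a classical Beta integral via a single scaling change of variables, and then to justify differentiating in the parameter $r$ at the origin by dominated convergence. First I would substitute $u:=A\,x^{-p}$, equivalently $x=A^{1/p}u^{-1/p}$ with $dx=-\tfrac{1}{p}A^{1/p}u^{-1/p-1}\,du$; the map sends $(0,\infty)$ to $(\infty,0)$, and reversing orientation absorbs the minus sign. Using $x^r=A^{r/p}u^{-r/p}$ and collecting the powers of $A$, the integrand $A^2 x^{r-2p}(1+Ax^{-p})^{-2}$ collapses to a pure $u$-expression, giving
\[
I(r)=\frac{1}{p}\,A^{(r+1)/p}\int_0^\infty \frac{u^{\,1-(r+1)/p}}{(1+u)^2}\,du.
\]

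Next I would recognize the $u$-integral as a Beta integral in the form $B(s,t)=\int_0^\infty u^{s-1}(1+u)^{-(s+t)}\,du=\Gamma(s)\Gamma(t)/\Gamma(s+t)$, with the choice $s=2-(r+1)/p$ and $t=(r+1)/p$, so that $s+t=2$ and the factor $\Gamma(2)=1$ vanishes from the denominator. Convergence of $B(s,t)$ requires $s,t>0$, which is exactly the condition $0<(r+1)/p<2$ stated in the lemma. Substituting back yields \eqref{eq:I-gamma}.

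For the derivative identities, I would apply the standard differentiation-under-the-integral theorem. For each $r_0$ in the admissible range, pick $\varepsilon>0$ small enough that $[r_0-\varepsilon,r_0+\varepsilon]$ still lies in the open range $0<(r+1)/p<2$. Near $x=0$ the factor $Ax^{-p}/(1+Ax^{-p})$ is bounded by $1$, so the integrand (with $k$ extra $\ln x$ factors) is dominated by $|x|^{r_0-\varepsilon}|\ln x|^k$, integrable on $(0,1)$. Near $x=\infty$ we have $Ax^{-p}/(1+Ax^{-p})=Ax^{-p}(1+o(1))$, so the integrand is dominated by $Cx^{r_0+\varepsilon-2p}|\ln x|^k$, integrable on $(1,\infty)$ thanks to $2p-1>0$ coming from the standing assumption $p\in(1/2,1)$. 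Dominated convergence then authorizes moving $\partial_r^k$ inside the integral for $k=1,2$, and setting $r=0$ produces the claimed expressions for $I'(0)$ and $I''(0)$.

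The main obstacle is purely bookkeeping rather than analytic: one must track the powers through the substitution so that the $u$-integral lands in exactly the $B(s,t)$ form with $s+t=2$, and then produce uniform integrable majorants on a real neighborhood of $r=0$ to justify termwise differentiation. Once the substitution is set up correctly, the remaining work is a direct application of classical Gamma-function identities, with no delicate singularity to manage.
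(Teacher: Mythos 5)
Your proof is correct and follows essentially the same route as the paper's: the substitution $u=Ax^{-p}$ reducing $I(r)$ to the Beta integral $B\!\left(2-\tfrac{r+1}{p},\tfrac{r+1}{p}\right)$ with $\Gamma(2)=1$, followed by dominated convergence to justify differentiating under the integral sign (your explicit majorants near $0$ and $\infty$ are a welcome elaboration of the paper's one-line justification). The only caveat, inherited from the lemma statement itself rather than from your argument, is that the integrand of $I(r)$ equals $\sigma^{4}w(x)x^{r}$ rather than $w(x)x^{r}$, so the derivative identities hold up to that constant factor --- consistent with the explicit $\sigma^{-4}$ appearing later in \eqref{eq:J0-gamma-closed}.
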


\begin{proof}
Use the change of variables $t=A x^{-p}$, i.e.\ $x=(A/t)^{1/p}$.
A direct computation yields
\[
I(r)=\frac{1}{p}A^{(r+1)/p}\int_0^\infty \frac{t^{1-(r+1)/p}}{(1+t)^2}\,dt
=\frac{1}{p}A^{(r+1)/p}B\!\left(2-\frac{r+1}{p},\,\frac{r+1}{p}\right),
\]
which gives \eqref{eq:I-gamma} by the identity $B(x,y)=\Gamma(x)\Gamma(y)/\Gamma(x+y)$ and $\Gamma(2)=1$.
Finally, differentiating under the integral sign (justified by dominated convergence for $r$ near $0$) gives
$I'(0)=\int_0^\infty w(x)\ln x\,dx$ and $I''(0)=\int_0^\infty w(x)(\ln x)^2\,dx$.
\end{proof}

\subsection{Closed form for $J_0$}
Since $w$ is even, $J_0=2\int_0^\infty w(x)\,dx=2I(0)$. Lemma~\ref{lem:master-integral} with $r=0$ yields
\begin{equation}\label{eq:J0-gamma-closed}
J_0(H,\sigma)=\frac{2}{p{\sigma^{4}}}\,A^{1/p}\,
\Gamma\!\left(\frac1p\right)\Gamma\!\left(2-\frac1p\right)
=\frac{2}{p}\,{\sigma^{2/p-4}c^{1/p}_{H}} \,
\Gamma\!\left(\frac1p\right)\Gamma\!\left(2-\frac1p\right).
\end{equation}

\subsection{Closed form for $J_1$ and $J_2$}
Introduce the one-sided logarithmic moments
\[
K_1:=\int_0^\infty w(x)\,\ln x\,dx=I'(0),\qquad
K_2:=\int_0^\infty w(x)\,(\ln x)^2\,dx=I''(0).
\]
Using evenness again,
\begin{equation}\label{eq:J1J2-via-K}
J_1=C_H J_0-4K_1,\qquad
J_2=C_H^2 J_0-8C_H K_1+8K_2.
\end{equation}

To express $K_1,K_2$ explicitly, differentiate $\ln I(r)$ at $r=0$. From \eqref{eq:I-gamma},
\[
\ln I(r)= -\ln p+\frac{r+1}{p}\ln A
+\ln\Gamma\!\left(\frac{r+1}{p}\right)
+\ln\Gamma\!\left(2-\frac{r+1}{p}\right).
\]
Let $\psi=\Gamma'/\Gamma$ denote the digamma function and $\psi_1=\psi'$ the trigamma function. Then
\begin{align}
\frac{I'(0)}{I(0)}
&=\frac{1}{p}\Big(L_A+\psi(1/p)-\psi(2-1/p)\Big),\label{eq:I1-over-I0-closed}\\
\frac{I''(0)}{I(0)}
&=\Big(\frac{I'(0)}{I(0)}\Big)^2+\frac{1}{p^2}\Big(\psi_1(1/p)+\psi_1(2-1/p)\Big).\label{eq:I2-over-I0-closed}
\end{align}
Since $I(0)=J_0/2$, we obtain
\begin{equation}\label{eq:K1K2-closed}
K_1=\frac{J_0}{2}\cdot \frac{1}{p}\Big(L_A+\psi(1/p)-\psi(2-1/p)\Big),
\end{equation}
\[
K_2=\frac{J_0}{2}\left[
\Big(\frac{I'(0)}{I(0)}\Big)^2+\frac{1}{p^2}\Big(\psi_1(1/p)+\psi_1(2-1/p)\Big)
\right].
\]
Combining \eqref{eq:J1J2-via-K} and \eqref{eq:K1K2-closed} gives the closed forms:
\begin{equation}\label{eq:J1-closed-gamma}
J_1(H,\sigma)
=
J_0(H,\sigma)\left[
C_H-\frac{2}{p}\Big(L_A+\psi(1/p)-\psi(2-1/p)\Big)
\right],
\end{equation}
and
\begin{equation}\label{eq:J2-closed-gamma}
J_2(H,\sigma)
=
J_0(H,\sigma)\left[
\Big(C_H-\frac{2}{p}\big(L_A+\psi(1/p)-\psi(2-1/p)\big)\Big)^2
+\frac{4}{p^2}\big(\psi_1(1/p)+\psi_1(2-1/p)\big)
\right].
\end{equation}
\section{Why two matrices $M_n^{(1)},M_n^{(2)}$: degeneracy without projection}\label{app:two-M}
\setcounter{equation}{0} 
\renewcommand{\theequation}{B.\arabic{equation}} 
This appendix explains why we use two successive transformations
\[
M_n^{(1)}=
\begin{pmatrix}1&0\\ -\sigma\ln(\Delta_n)&1\end{pmatrix},
\qquad
M_n^{(2)}=
\begin{pmatrix}1&0\\ -\frac{\sigma a_n}{2}&1\end{pmatrix},
\qquad
M_n=M_n^{(2)}M_n^{(1)},
\]
rather than a single one, and why the projection step is necessary to obtain a full-rank ($2\times2$) limiting information.

\subsection{Step 1: remove the explicit linear term in the $H$-score}
Recall the exact decomposition (see Lemma~\ref{lem:explicit-linear})
\begin{equation}\label{eq:SH-linear-app}
S_{H,n}=\sigma\ln(\Delta_n)\,S_{\sigma,n}+R_{H,n},
\qquad
R_{H,n}:=\frac{\gamma_n}{2}\Big(Z_{n}^\top D_n Z_{n}-\Tr(D_n)\Big).
\end{equation}
Therefore,
\[
M_n^{(1)}\binom{S_{\sigma,n}}{S_{H,n}}
=
\binom{S_{\sigma,n}}{R_{H,n}}.
\]
This first step produces a pair of centered Gaussian quadratic forms.

\subsection{Step 2: projection and orthogonalization}
Define the projection coefficient and orthogonalized matrix (Definition~\ref{def:an-orth})
\[
a_n:=\frac{\Tr(C_nD_n)}{\Tr(C_n^2)},
\qquad
D_n^\perp:=D_n-a_nC_n,
\qquad
\Tr(C_nD_n^\perp)=0,
\]
and the orthogonalized remainder
\[
R_{H,n}^\perp:=\frac{\gamma_n}{2}\Big(Z_{n}^\top D_n^\perp Z_{n}-\Tr(D_n^\perp)\Big)
=R_{H,n}-\frac{\sigma a_n}{2}\,S_{\sigma,n}.
\]
This corresponds exactly to the second transformation
\[
M_n^{(2)}\binom{S_{\sigma,n}}{R_{H,n}}
=
\binom{S_{\sigma,n}}{R_{H,n}^\perp}.
\]
By Wick's identity for Gaussian quadratic forms and $\Tr(C_nD_n^\perp)=0$, one has the exact orthogonality
\[
\Cov(S_{\sigma,n},R_{H,n}^\perp)=0
\qquad\text{for every }n
\]
(cf.\ Lemma~\ref{lem:orthogonality}).

\subsection{Singular covariance without the projection step}
Let $\mathcal T_n:=n\Delta_n$ and $L_n:=\ln(1/\Delta_n)=|\ln\Delta_n|$.
Consider the normalized pair after only the first transformation:
\[
U_n:=
\binom{\displaystyle \frac{S_{\sigma,n}}{\sqrt{\mathcal T_n}}}
{\displaystyle \frac{R_{H,n}}{L_n\sqrt{\mathcal T_n}}}.
\]

\begin{proposition}\label{prop:rank1-without-projection}
Recall the trace asymptotics proved in Lemma~\ref{lem:TA}:
\begin{align*}
\Tr(C_n^2)
&=\frac{n}{2\pi}\Delta_n^{1-2p}\big(J_0+o(1)\big),\\
\Tr(C_nD_n)
&=\frac{n}{2\pi}\Delta_n^{1-2p}\big(2L_nJ_0+J_1+{o(L_{n})}\big),\\
\Tr(D_n^2)
&=\frac{n}{2\pi}\Delta_n^{1-2p}\big(4L_n^2J_0+4L_nJ_1+J_2+{o(L^{2}_{n})}\big).
\end{align*}
Combining these with the operator--Frobenius negligibility (which justifies the quadratic-form CLT), the vector $U_n$ converges jointly to a centered Gaussian vector with covariance matrix
\[
\Sigma^{(1)}=
\frac{J_0(H,\sigma)}{\pi}
\begin{pmatrix}
\sigma^2 & \sigma^3\\
\sigma^3 & \sigma^4
\end{pmatrix},
\qquad \mathrm{rank}(\Sigma^{(1)})=1.
\]
Equivalently, the two components are asymptotically perfectly correlated:
\[
\Corr\!\left(\frac{S_{\sigma,n}}{\sqrt{\mathcal T_n}},\frac{R_{H,n}}{L_n\sqrt{\mathcal T_n}}\right)\longrightarrow 1.
\]
\end{proposition}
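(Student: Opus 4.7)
The plan is to compute the $2\times 2$ covariance matrix of $U_n$ exactly via Wick's identity for centered Gaussian quadratic forms, substitute the three trace asymptotics quoted in the proposition to read off the rank-one limit $\Sigma^{(1)}$, and then promote this second-moment convergence to joint distributional convergence via Cram\'er--Wold combined with Lemma~\ref{lem:QF-CLT}.

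Writing $S_{\sigma,n}=(\gamma_n/\sigma)Q_n(C_n)$ and $R_{H,n}=(\gamma_n/2)Q_n(D_n)$ with $Q_n(A):=Z_n^\top AZ_n-\Tr(A)$, Wick's formula gives the exact expressions
\[
\Var(S_{\sigma,n})=\frac{2\gamma_n^2}{\sigma^2}\Tr(C_n^2),\qquad
\Cov(S_{\sigma,n},R_{H,n})=\frac{\gamma_n^2}{\sigma}\Tr(C_nD_n),\qquad
\Var(R_{H,n})=\frac{\gamma_n^2}{2}\Tr(D_n^2).
\]
Dividing by $\mathcal T_n$, $L_n\mathcal T_n$, and $L_n^2\mathcal T_n$ respectively, substituting $\gamma_n^2=\sigma^4\Delta_n^{2p}$ and $\mathcal T_n=n\Delta_n$, and inserting the stated expansions, the common factor $n\Delta_n^{1-2p}$ cancels identically. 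The extra $1/L_n$ and $1/L_n^2$ in the denominators kill every $J_1$ and $J_2$ contribution in $\Tr(C_nD_n)$ and $\Tr(D_n^2)$, leaving only the $2L_nJ_0$ and $4L_n^2J_0$ leading terms; the three limits are $\sigma^2J_0/\pi$, $\sigma^3J_0/\pi$, $\sigma^4J_0/\pi$. These assemble into $\Sigma^{(1)}=(\sigma^2J_0/\pi)\binom{1}{\sigma}\binom{1}{\sigma}^\top$, which is manifestly rank one with null direction $\binom{-\sigma}{1}$.

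For joint convergence I would invoke Cram\'er--Wold on
\[
u_1\,\frac{S_{\sigma,n}}{\sqrt{\mathcal T_n}}+u_2\,\frac{R_{H,n}}{L_n\sqrt{\mathcal T_n}}
=\frac{\gamma_n}{\sqrt{\mathcal T_n}}\,Q_n(M_{n,u}),\qquad
M_{n,u}:=\frac{u_1}{\sigma}C_n+\frac{u_2}{2L_n}D_n,
\]
distinguishing two cases. If $u_1+\sigma u_2\neq 0$, the decomposition $D_n=D_n^\perp+a_nC_n$ with $a_n=2L_n+O(1)$ rewrites $M_{n,u}$ as $\bigl[(u_1+\sigma u_2)/\sigma+O(1/L_n)\bigr]C_n+(u_2/(2L_n))D_n^\perp$; its Frobenius norm is of exact order $\|C_n\|_F\asymp\sqrt n\Delta_n^{(1-2p)/2}$ (the cross term vanishes because $\Tr(C_nD_n^\perp)=0$), while its operator norm is $O(\gamma_n^{-1})$ by Lemmas~\ref{lem:D-op} and~\ref{lem:Dperp-ratio}, so $\opnorm{M_{n,u}}/\Fnorm{M_{n,u}}=O(1/\sqrt{\mathcal T_n})\to 0$ and Lemma~\ref{lem:QF-CLT} delivers a Gaussian limit whose variance automatically matches $u^\top\Sigma^{(1)}u$. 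When $u$ is parallel to $(-\sigma,1)$, the same decomposition collapses the $C_n$-coefficient to $O(1/L_n)$ and a direct Wick computation shows the variance of the linear combination is $O(1/L_n^2)\to 0$; Chebyshev then gives convergence to zero in probability, matching the degenerate marginal $\mathcal N(0,u^\top\Sigma^{(1)}u)=\delta_0$.

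The main obstacle is the null-direction bookkeeping: one has to verify that after removing the $2L_nC_n$ leading component of $D_n$, the residual $(a_n-2L_n)C_n+D_n^\perp$ is genuinely smaller by a factor $1/L_n$, which requires the refined second-order expansion $a_n=2L_n+m+o(1)$ from Lemma~\ref{lem:an-expansion} together with the exact orthogonality $\Tr(C_nD_n^\perp)=0$ to make all cross terms drop out of the Frobenius computation. The stated correlation limit is then an immediate corollary: the off-diagonal entry of $\Sigma^{(1)}$ divided by the geometric mean of its diagonal entries equals $\sigma^3/(\sigma\cdot\sigma^2)=1$.
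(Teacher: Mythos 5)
Your proposal is correct and its core is the same as the paper's proof: both compute the three entries of the covariance of $U_n$ exactly via Wick's identity, insert the trace asymptotics from Lemma~\ref{lem:TA}, and observe that the $1/L_n$ and $1/L_n^2$ normalizations leave only the $J_0$ leading terms, giving the rank-one matrix $\Sigma^{(1)}=(\sigma^2 J_0/\pi)(1,\sigma)^\top(1,\sigma)$. The one place you go further is the joint distributional convergence: the paper's proof stops at the second-moment computation (the CLT is only asserted via the op/F condition), whereas you carry out the Cram\'er--Wold step explicitly, splitting off the null direction $(-\sigma,1)$ using $D_n=a_nC_n+D_n^\perp$ with $a_n=2L_n+m+o(1)$ and handling it by a Chebyshev variance bound; this is a correct and genuinely more complete treatment of the degenerate marginal, and your bookkeeping (cross terms vanish by $\Tr(C_nD_n^\perp)=0$, operator norms controlled by Lemmas~\ref{lem:D-op} and~\ref{lem:Dperp-ratio}) checks out.
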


\begin{proof}
Write $Q(A):=Z_{n}^\top A Z_{n}-\Tr(A)$. Then
\[
S_{\sigma,n}=\frac{\gamma_n}{\sigma}Q(C_n),\qquad
R_{H,n}=\frac{\gamma_n}{2}Q(D_n),
\qquad \gamma_n=\sigma^2\Delta_n^{p}.
\]
By Wick's identity,
\[
\Cov(Q(A),Q(B))=2\Tr(AB),
\qquad
\Var(Q(A))=2\Tr(A^2).
\]
Hence
\[
\Var\!\left(\frac{S_{\sigma,n}}{\sqrt{\mathcal T_n}}\right)
=\frac{\gamma_n^2}{\sigma^2\mathcal T_n}\cdot 2\Tr(C_n^2)
\to \frac{\sigma^2}{\pi}J_0,
\]
and similarly
\[
\Var\!\left(\frac{R_{H,n}}{L_n\sqrt{\mathcal T_n}}\right)
=\frac{\gamma_n^2}{4L_n^2\mathcal T_n}\cdot 2\Tr(D_n^2)
\to \frac{\sigma^4}{\pi}J_0.
\]
For the covariance,
\[
\Cov\!\left(\frac{S_{\sigma,n}}{\sqrt{\mathcal T_n}},\frac{R_{H,n}}{L_n\sqrt{\mathcal T_n}}\right)
=
\frac{\gamma_n^2}{2\sigma L_n\mathcal T_n}\cdot 2\Tr(C_nD_n)
=
\frac{\gamma_n^2}{\sigma L_n\mathcal T_n}\Tr(C_nD_n).
\]
Using $\gamma_n^2=\sigma^4\Delta_n^{2p}$ and $\mathcal T_n=n\Delta_n$ together with the stated asymptotic for $\Tr(C_nD_n)$ yields
\[
\Cov\!\left(\frac{S_{\sigma,n}}{\sqrt{\mathcal T_n}},\frac{R_{H,n}}{L_n\sqrt{\mathcal T_n}}\right)
=
\frac{\sigma^4\Delta_n^{2p}}{\sigma L_n\,n\Delta_n}\cdot \frac{n}{2\pi}\Delta_n^{1-2p}\big(2L_nJ_0+{J_{1}+o(1)}\big)
\to \frac{\sigma^3}{\pi}J_0.
\]
Thus the limiting covariance matrix is exactly $\Sigma^{(1)}$, whose determinant is $0$, hence rank $1$.
\end{proof}

\section{The regime $1/2<H<3/4$}\label{app:subcritical-opF}
\setcounter{equation}{0} 
\renewcommand{\theequation}{C.\arabic{equation}} 
This appendix treats the \emph{subcritical long-memory} regime $1/2<H<3/4$, i.e.\ 
\[
p:=2H-1\in(0,1/2),\qquad \Delta_n=n^{-\alpha}\ \ (\alpha\in(0,1)),\qquad \gamma_n=\sigma^2\Delta_n^{p}\to0.
\]
In contrast to the supercritical case $H>3/4$ (where $f_H^2\notin L^1$), here we have
\[
f_H\in L^2([-\pi,\pi]),\qquad \dot f_H\in L^2([-\pi,\pi]),
\]
and thus the relevant Toeplitz trace functionals are of order $n$. Here we only deal with the CLT for the score function with rate matrix but not the LAN property which can be obtained easily as in the section \ref{sec:LAN}. 

\subsection{Trace approximations at order $n$}

Recall $A_n:=I_n+\gamma_nT_n(H)=T_n(a_n)$ with $a_n(\lambda):=1+\gamma_n f_H(\lambda)$ and
\[
C_n:=A_n^{-1/2}T_n(H)A_n^{-1/2},\qquad D_n:=A_n^{-1/2}\dot T_n(H)A_n^{-1/2}.
\]
Define the (triangular-array) symbols
\[
g_n(\lambda):=\frac{f_H(\lambda)}{a_n(\lambda)},\qquad 
h_n(\lambda):=\frac{\dot f_H(\lambda)}{a_n(\lambda)}.
\]

\begin{lemma}\label{lem:TA-subcritical}
Assume $1/2<H<3/4$ (equivalently $p\in(0,1/2)$) and $\gamma_n=\sigma^2\Delta_n^{p}\to0$. Then
\begin{align}
\Tr(C_n^2)
&=
\frac{n}{2\pi}\int_{-\pi}^{\pi} g_n(\lambda)^2\,d\lambda
+o(n),\label{eq:TA-sub-C2}\\
\Tr(C_nD_n)
&=
\frac{n}{2\pi}\int_{-\pi}^{\pi} g_n(\lambda)h_n(\lambda)\,d\lambda
+o(n),\label{eq:TA-sub-CD}\\
\Tr(D_n^2)
&=
\frac{n}{2\pi}\int_{-\pi}^{\pi} h_n(\lambda)^2\,d\lambda
+o(n).\label{eq:TA-sub-D2}
\end{align}
\end{lemma}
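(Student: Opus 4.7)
The plan is to parallel the proof of Lemma \ref{lem:TA} but with a sharper $o(n)$ error bound, which is feasible because in the subcritical regime $p<1/2$ both $f_H$ and $\dot f_H$ belong to $L^2([-\pi,\pi])$---a property that fails when $p\ge1/2$. Concretely, $f_H(\lambda)^2\asymp|\lambda|^{-2p}$ and, by Lemma \ref{lem:FH-fH}, $\dot f_H(\lambda)^2\asymp|\lambda|^{-2p}(\log|\lambda|)^2$ near the origin, both integrable on $[-\pi,\pi]$ when $2p<1$; the logarithmic factors that forced the $\log^{k(b)+k(c)}n$ loss in Proposition \ref{prop:GSzego-inv} are now harmless because they are absorbed into a finite integral rather than an $n$-growing normalization.

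The first step of the proof mirrors Lemma \ref{lem:TA} exactly: since $A_n=I_n+\gamma_nT_n(H)$ is a polynomial in $T_n(H)$, the matrices $A_n^{-1}$ and $T_n(H)$ commute, and cyclicity of trace yields the symmetric representations
\begin{equation*}
\Tr(C_n^2)=\Tr\bigl(T_n A_n^{-1}T_n A_n^{-1}\bigr),\quad
\Tr(C_nD_n)=\Tr\bigl(T_n A_n^{-1}\dot T_n A_n^{-1}\bigr),\quad
\Tr(D_n^2)=\Tr\bigl(\dot T_n A_n^{-1}\dot T_n A_n^{-1}\bigr).
\end{equation*}
The main technical step is then to prove the sharpened trace identity
\begin{equation*}
\Tr\bigl(T_n(b)\,A_n^{-1}\,T_n(c)\,A_n^{-1}\bigr)
=\frac{n}{2\pi}\int_{-\pi}^{\pi}\frac{b(\lambda)c(\lambda)}{a_n(\lambda)^2}\,d\lambda+o(n),
\qquad b,c\in\{f_H,\dot f_H\}.
\end{equation*}
I would obtain this by combining two ingredients: (i) the $L^2$-Szeg\H{o} product formula of Avram \cite{Avram88}, which gives $\Tr(T_n(u)T_n(v))=\frac{n}{2\pi}\int uv+o(n)$ for $u,v\in L^2$; and (ii) a Hilbert--Schmidt preconditioner approximation $\Fnorm{A_n^{-1}-T_n(1/a_n)}^2=o(n)$, which follows from $a_n\ge 1$ together with standard Toeplitz-inverse asymptotics (cf.\ B\"ottcher--Silbermann \cite{BottcherSilb}). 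Cross terms arising when substituting $T_n(1/a_n)$ for $A_n^{-1}$ are controlled by Cauchy--Schwarz in the Frobenius inner product, using the elementary bound $\Fnorm{T_n(b)}=O(\sqrt n)$ valid for any $b\in L^2$ and the uniform bound $\opnorm{T_n(1/a_n)}\le 1$.

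The main obstacle is the triangular-array dependence of $a_n=1+\gamma_nf_H$: classical Szeg\H{o} and Avram--Parter statements are formulated for fixed symbols, whereas here the preconditioner itself drifts with $n$. However, since $\gamma_n\to 0$ one has $a_n\to 1$ in $L^q$ for every finite $q$, and $1/a_n$ is uniformly bounded, so I expect the preconditioning approximation to go through with uniform constants. If tracking these constants through the $L^2$-Szeg\H{o} machinery proves awkward, a direct fallback is to use the joint spectral decomposition $\Tr(C_n^2)=\sum_i\lambda_{n,i}^2/(1+\gamma_n\lambda_{n,i})^2$ (with $\lambda_{n,i}$ the eigenvalues of $T_n(H)$) and apply a Szeg\H{o}-type eigenvalue distribution theorem to the bounded continuous functional $x\mapsto(x/(1+\gamma_nx))^2$, combined with uniform integrability of $f_H^2$ to upgrade $O(n)$ to $o(n)$.
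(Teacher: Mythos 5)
Your overall strategy coincides with the paper's: the paper's own proof of this lemma is essentially a citation, observing that for $p\in(0,1/2)$ the symbols $f_H,\dot f_H$ (hence $g_n,h_n$, uniformly in $n$ since $a_n\ge1$) lie in $L^2([-\pi,\pi])$, and then invoking classical Szeg\H{o}/Avram-type trace theorems for products of Toeplitz matrices to obtain the order-$n$ approximation with $o(n)$ remainder. You arrive at the same $L^2$ reduction, correctly explain why the $\log^{k(b)+k(c)}n$ losses of Proposition~\ref{prop:GSzego-inv} disappear, and your opening reduction by commutativity and cyclicity is exactly the one used in Lemma~\ref{lem:TA}. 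Where you go further is in trying to make the citation concrete via the decomposition $A_n^{-1}=T_n(1/a_n)+E_n$ followed by Avram's product formula, and you honestly flag the triangular-array dependence of $a_n$ as the real obstacle --- a point the paper's proof does not address at all.

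The one step that does not close as written is the cross-term estimate. With $E_n:=A_n^{-1}-T_n(1/a_n)$, a typical cross term is $\Tr\bigl(T_n(b)\,E_n\,T_n(c)\,A_n^{-1}\bigr)$, and Cauchy--Schwarz with exactly the ingredients you list ($\Fnorm{T_n(b)},\Fnorm{T_n(c)}=O(\sqrt n)$ together with $\opnorm{T_n(1/a_n)},\opnorm{A_n^{-1}}\le1$) yields only $\Fnorm{T_n(b)}\,\opnorm{E_n}\,\Fnorm{T_n(c)}=O(n)\,\opnorm{E_n}$; and $\opnorm{E_n}$ need not be $o(1)$, since the standard identity writes $I_n-T_n(a_n)T_n(1/a_n)$ as a truncated product of Hankel matrices whose norm is driven by $\gamma_n$ times the truncated Hankel matrix of $f_H$, which can grow like $n^{p(1-\alpha)}$ because $f_H$ has a power singularity (it is not of bounded mean oscillation). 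The alternative pairing $\Fnorm{E_n}\cdot\Fnorm{T_n(c)A_n^{-1}T_n(b)}$ costs a factor $\opnorm{T_n(b)}=O(n^{p})$ and gives $o\bigl(n^{1/2+p+p(1-\alpha)}\bigr)$, which exceeds $n$ when $p$ is close to $1/2$ and $\alpha$ is close to $0$. So the preconditioning route requires genuinely finer Frobenius/Hankel bookkeeping (or, for the $\Tr(C_n^2)$ term only, your eigenvalue-functional fallback, which is available because $A_n$ and $T_n(H)$ commute). Since the paper itself delegates precisely this step to the cited literature, your proposal is not less complete than the paper's proof, but the specific Cauchy--Schwarz argument you sketch would have to be replaced.
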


\begin{proof}
The key point is that for $p\in(0,1/2)$ we have $f_H,\dot f_H\in L^2([-\pi,\pi])$, hence
$g_n,h_n\in L^2$ uniformly because $a_n(\lambda)\ge1$. In this $L^2$-setting, classical Szeg\H{o}/Avram-type
trace theorems for products of Toeplitz matrices yield the order-$n$ trace approximation with $o(n)$ remainder,
see e.g.\ Avram~\cite{Avram88} and standard Toeplitz references such as Grenander--Szeg\H{o}~\cite{GrenanderSzego}
or B{\"o}ttcher--Silbermann~\cite{BottcherSilb}. Applying these results to the sandwiched forms corresponding to
$C_n,D_n$ (equivalently, to the symbols $g_n,h_n$) gives \eqref{eq:TA-sub-C2}--\eqref{eq:TA-sub-D2}.
\end{proof}

\begin{lemma}\label{lem:subcritical-integrals}
Under $1/2<H<3/4$ (so $p\in(0,1/2)$) and $\gamma_n\to0$,
\begin{align}
\int_{-\pi}^{\pi} g_n(\lambda)^2\,d\lambda&\longrightarrow \int_{-\pi}^{\pi} f_H(\lambda)^2\,d\lambda,\label{eq:g2-limit}\\
\int_{-\pi}^{\pi} g_n(\lambda)h_n(\lambda)\,d\lambda&\longrightarrow \int_{-\pi}^{\pi} f_H(\lambda)\dot f_H(\lambda)\,d\lambda,\label{eq:gh-limit}\\
\int_{-\pi}^{\pi} h_n(\lambda)^2\,d\lambda&\longrightarrow \int_{-\pi}^{\pi} \dot f_H(\lambda)^2\,d\lambda.\label{eq:h2-limit}
\end{align}
\end{lemma}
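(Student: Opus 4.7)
The plan is to prove all three limits by a single application of the dominated convergence theorem, exploiting the fact that in the subcritical regime $p\in(0,1/2)$ both $f_H$ and $\dot f_H$ are square-integrable on $[-\pi,\pi]$, while the regularization $a_n(\lambda)=1+\gamma_n f_H(\lambda)$ tends to $1$ from above as $\gamma_n\to 0$.

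First I would record the pointwise convergence: for each fixed $\lambda\in[-\pi,\pi]\setminus\{0\}$ one has $f_H(\lambda)<\infty$, hence $a_n(\lambda)\to 1$ and therefore $g_n(\lambda)\to f_H(\lambda)$ and $h_n(\lambda)\to \dot f_H(\lambda)$. This immediately yields pointwise convergence of each of the three integrands $g_n^2$, $g_n h_n$, $h_n^2$ to $f_H^2$, $f_H\dot f_H$, $\dot f_H^2$ almost everywhere.

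Next I would establish the domination. Since $f_H\ge 0$ and $\gamma_n>0$, we have $a_n(\lambda)\ge 1$ uniformly in $n$ and $\lambda$, so
\[
|g_n(\lambda)^2|\le f_H(\lambda)^2,\qquad |h_n(\lambda)^2|\le \dot f_H(\lambda)^2,\qquad |g_n(\lambda)h_n(\lambda)|\le \tfrac12\bigl(f_H(\lambda)^2+\dot f_H(\lambda)^2\bigr).
\]
It remains to verify that $f_H^2$ and $\dot f_H^2$ belong to $L^1([-\pi,\pi])$ in the regime $p\in(0,1/2)$. By Lemma~\ref{lem:FH-fH}, $f_H(\lambda)=|2\sin(\lambda/2)|^{-p}\ell_H(\lambda)$ with $\ell_H$ bounded, so near $0$ we have $f_H(\lambda)^2\asymp|\lambda|^{-2p}$, which is integrable since $2p<1$. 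Similarly, the decomposition \eqref{eq:FH-repr-dfH-lem} gives $|\dot f_H(\lambda)|\le C\,|\lambda|^{-p}\bigl(1+|\log|\lambda||\bigr)$ near $0$, so $\dot f_H(\lambda)^2\le C'|\lambda|^{-2p}(1+\log^2|\lambda|)$, which is integrable on $[-\pi,\pi]$ since $2p<1$ and logarithmic factors do not spoil integrability.

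With pointwise convergence and an $L^1$-dominant in hand, the dominated convergence theorem directly yields \eqref{eq:g2-limit}--\eqref{eq:h2-limit}. The only genuine ingredient is therefore the integrability of $\dot f_H^2$, which is where the restriction $H<3/4$ enters critically; for $H\ge 3/4$ one would have $2p\ge 1$ and the singularity at the origin would no longer be integrable, which is precisely why the supercritical analysis in the main text required the triangular-array regularization via $a_n$ and could not simply take a naive $\gamma_n\to 0$ limit.
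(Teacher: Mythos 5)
Your proof is correct and follows essentially the same route as the paper: pointwise convergence of $a_n\to 1$, the uniform bound $a_n\ge 1$ giving domination by $f_H^2$, $\dot f_H^2$ (via AM--GM for the cross term), and dominated convergence, with the square-integrability of $f_H$ and $\dot f_H$ for $p<1/2$ as the key ingredient. The only difference is that you explicitly verify the $L^2$ membership from Lemma~\ref{lem:FH-fH}, which the paper merely asserts; this is a welcome addition rather than a deviation.
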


\begin{proof}
Since $a_n(\lambda)=1+\gamma_n f_H(\lambda)\to1$ pointwise and $a_n(\lambda)\ge1$, we have
$|g_n|\le |f_H|$ and $|h_n|\le |\dot f_H|$. Because $f_H,\dot f_H\in L^2([-\pi,\pi])$ for $p\in(0,1/2)$,
dominated convergence yields \eqref{eq:g2-limit}--\eqref{eq:h2-limit}.
\end{proof}

\subsection{Operator/Frobenius ratios in the subcritical regime}

The op/F condition \eqref{eq:opF-cond} required by Lemma~\ref{lem:QF-CLT} follows from the fact that the operator norm
of long-memory Toeplitz matrices grows like $n^{p}$ (up to log-factors for $\dot f_H$), while the Frobenius norms scale like $\sqrt{n}$.

\begin{lemma}\label{lem:opF-subcritical}
Fix $1/2<H<3/4$ and set $p:=2H-1\in(0,1/2)$. Then
\[
\frac{\opnorm{C_n}}{\Fnorm{C_n}}\longrightarrow 0,
\qquad
\frac{\opnorm{D_n}}{\Fnorm{D_n}}\longrightarrow 0.
\]
\end{lemma}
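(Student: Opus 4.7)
The plan is to control each operator norm by a simple spectral argument and each Frobenius norm via the $n$-order trace approximations already established in Lemmas~\ref{lem:TA-subcritical}--\ref{lem:subcritical-integrals}, and then compare the polynomial orders. Because $p=2H-1<1/2$ and $\alpha<1$, the product $p\alpha$ is strictly below $1/2$, which is exactly the margin that drives both ratios to zero.

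For $C_n$, I would repeat the argument used in the proof of Proposition~\ref{prop:sigma-clt}: since $T_n(f_H)$ is positive semidefinite with eigenvalues $\mu_i\ge0$ and $x\mapsto x/(1+\gamma_n x)$ is monotone and bounded by $1/\gamma_n$, the eigenvalues $\mu_i/(1+\gamma_n\mu_i)$ of $C_n$ satisfy $\opnorm{C_n}\le \gamma_n^{-1}=\sigma^{-2}\Delta_n^{-p}=\sigma^{-2}n^{p\alpha}$. Combined with Lemma~\ref{lem:TA-subcritical}\,\eqref{eq:TA-sub-C2} and Lemma~\ref{lem:subcritical-integrals}\,\eqref{eq:g2-limit},
\[
\Fnorm{C_n}^2=\frac{n}{2\pi}\int_{-\pi}^{\pi} f_H(\lambda)^2\,d\lambda\,(1+o(1))\asymp n,
\]
because $f_H\in L^2([-\pi,\pi])$ in this regime. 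Hence $\opnorm{C_n}/\Fnorm{C_n}=O(n^{p\alpha-1/2})\to 0$.

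For $D_n$, I would invoke Lemma~\ref{lem:D-op} to obtain $\opnorm{D_n}=O(\gamma_n^{-1}\log n)=O(n^{p\alpha}\log n)$, and then use Lemma~\ref{lem:TA-subcritical}\,\eqref{eq:TA-sub-D2} and Lemma~\ref{lem:subcritical-integrals}\,\eqref{eq:h2-limit} to obtain
\[
\Fnorm{D_n}^2=\frac{n}{2\pi}\int_{-\pi}^{\pi}\dot f_H(\lambda)^2\,d\lambda\,(1+o(1))\asymp n,
\]
where finiteness of the integral follows from the near-zero bound $|\dot f_H(\lambda)|\lesssim |\lambda|^{-p}(1+|\log|\lambda||)$ (cf.\ the proof of Lemma~\ref{lem:verify-GSzego}) together with $2p<1$, which keeps $|\lambda|^{-2p}\log^2|\lambda|$ integrable at the origin. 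This gives $\opnorm{D_n}/\Fnorm{D_n}=O(n^{p\alpha-1/2}\log n)\to 0$.

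The only genuine subtlety will be confirming that Lemma~\ref{lem:D-op} and Lemma~\ref{lem:FH-fH} (whose statements were phrased for $H>3/4$) remain valid for $H\in(1/2,3/4)$. For Lemma~\ref{lem:FH-fH} this is immediate since the explicit representation \eqref{eq:fGn-sdf-explicit} and the uniform convergence of the series $B(\lambda,H)$ hold for every $H\in(1/2,1)$; the positivity and $C^2$ properties of $\ell_H$ then transfer verbatim. For Lemma~\ref{lem:D-op} the only structural ingredients are the Fisher--Hartwig decomposition and the standard Toeplitz preconditioning bound $\opnorm{T_n(b_H)}=O(\log n)$, neither of which uses $p>1/2$. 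With those two items audited, the remainder of the argument is pure bookkeeping of the polynomial orders $n^{p\alpha}$ versus $n^{1/2}$, and the strict inequality $p\alpha<1/2$ closes the estimate.
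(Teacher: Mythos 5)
Your proof is correct, but it bounds the operator norms by a genuinely different mechanism than the paper. The paper's proof simply discards the regularization: since $A_n\succeq I_n$, one has $\opnorm{A_n^{-1/2}}\le 1$, hence $\opnorm{C_n}\le\opnorm{T_n(H)}=O(n^{p})$ and $\opnorm{D_n}\le\opnorm{\dot T_n(H)}=O(n^{p}\log n)$ by the standard growth rates for long-memory Toeplitz matrices with Fisher--Hartwig exponent $p\in(0,1)$; combined with $\Fnorm{C_n},\Fnorm{D_n}\asymp\sqrt n$ this gives ratios of order $n^{p-1/2}$ and $n^{p-1/2}\log n$, which vanish purely because $p<1/2$, with no reference to $\alpha$. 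You instead keep the regularization and use $\opnorm{C_n}\le\gamma_n^{-1}=O(n^{p\alpha})$ together with Lemma~\ref{lem:D-op} for $\opnorm{D_n}=O(\gamma_n^{-1}\log n)$, yielding the sharper order $n^{p\alpha-1/2}$ (times $\log n$). Both routes close the estimate; yours gives a quantitatively better bound since $\alpha<1$, but at the cost of importing Lemma~\ref{lem:D-op} from the supercritical section and auditing that it, and the Fisher--Hartwig representation of Lemma~\ref{lem:FH-fH}, remain valid for $H\in(1/2,3/4)$ --- an audit you correctly flag and which does go through, since neither argument uses $p>1/2$. The paper's route stays self-contained within the subcritical appendix by citing only the classical operator-norm growth of $T_n(f_H)$ and $T_n(\dot f_H)$. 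The Frobenius-norm side of your argument (via Lemmas~\ref{lem:TA-subcritical} and~\ref{lem:subcritical-integrals}, with the integrability of $\dot f_H^2$ guaranteed by $2p<1$) coincides with the paper's.
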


\begin{proof}
Since $A_n\succeq I_n$, one has $\opnorm{A_n^{-1/2}}\le 1$, hence
\[
\opnorm{C_n}\le \opnorm{T_n(H)},\qquad \opnorm{D_n}\le \opnorm{\dot T_n(H)}.
\]
For long-memory Toeplitz matrices with Fisher--Hartwig exponent $p\in(0,1)$, it is standard that
\[
\opnorm{T_n(H)}=O(n^{p}),\qquad \opnorm{\dot T_n(H)}=O(n^{p}\log n),
\]
see e.g.\ B{\"o}ttcher--Silbermann~\cite{BottcherSilb} or Gray~\cite{Gray06} (the $\log n$ comes from the logarithmic factor in $\dot f_H$).
On the other hand, Lemma~\ref{lem:TA-subcritical} and Lemma~\ref{lem:subcritical-integrals} give
\[
\Fnorm{C_n}^2=\Tr(C_n^2)=\frac{n}{2\pi}\int_{-\pi}^{\pi} f_H(\lambda)^2\,d\lambda+o(n)\asymp n,
\]
and similarly $\Fnorm{D_n}^2=\Tr(D_n^2)\asymp n$. Therefore
\[
\frac{\opnorm{C_n}}{\Fnorm{C_n}}
\lesssim \frac{n^{p}}{\sqrt{n}}=n^{p-\frac12}\to0,
\qquad
\frac{\opnorm{D_n}}{\Fnorm{D_n}}
\lesssim \frac{n^{p}\log n}{\sqrt{n}}=n^{p-\frac12}\log n\to0,
\]
because $p<1/2$.
\end{proof}

\subsection{Joint CLT for the transformed score vector and the information matrix}

Recall from Lemma~\ref{lem:explicit-linear} the exact quadratic-form identities
\eqref{eq:Ssigma-quad} and \eqref{eq:Hscore-decomp}--\eqref{eq:RH-def}:
\[
S_{\sigma,n}=\frac{\gamma_n}{\sigma}\Big(Z_{n}^\top C_n Z_{n}-\Tr(C_n)\Big),
\qquad
S_{H,n}=\sigma\ln(\Delta_n)\,S_{\sigma,n}+R_{H,n},
\]
\begin{equation}
    R_{H,n}=\frac{\gamma_n}{2}\Big(Z_{n}^\top D_n Z_{n}-\Tr(D_n)\Big). \nonumber
\end{equation}
Let the subcritical normalization be
\[
v_n:=\sqrt{n}\,\Delta_n^{p}.
\]

\begin{proposition}\label{prop:subcritical-jointCLT}
Assume $1/2<H<3/4$ and $\Delta_n=n^{-\alpha}$ with $\alpha\in(0,1)$, so that $\gamma_n=\sigma^2\Delta_n^{p}\to0$.
Then
\[
\frac{1}{v_n}
\binom{S_{\sigma,n}}{R_{H,n}}
\ \Rightarrow\
\mathcal N\!\left(0,\ I^{(<)}(\sigma,H)\right),
\]
where
\[
I^{(<)}(\sigma,H):=
\begin{pmatrix}
\displaystyle \frac{2\sigma^2}{\pi}\int_{0}^{\pi} f_H(\lambda)^2\,d\lambda
&
\displaystyle \frac{\sigma^3}{\pi}\int_{0}^{\pi} f_H(\lambda)\dot f_H(\lambda)\,d\lambda
\\[3mm]
\displaystyle \frac{\sigma^3}{\pi}\int_{0}^{\pi} f_H(\lambda)\dot f_H(\lambda)\,d\lambda
&
\displaystyle \frac{\sigma^4}{2\pi}\int_{0}^{\pi} \dot f_H(\lambda)^2\,d\lambda
\end{pmatrix}.
\]
Equivalently, since $M_n^{(1)}$ is defined in \eqref{eq: rate matrix} and $M_n^{(1)}(S_{\sigma,n},S_{H,n})^\top=(S_{\sigma,n},R_{H,n})^\top$ by
\eqref{eq:Hscore-decomp}, one has
\[
\frac{1}{v_n}\,M_n^{(1)}
\binom{S_{\sigma,n}}{S_{H,n}}
\ \Rightarrow\
\mathcal N\!\left(0,\ I^{(<)}(\sigma,H)\right).
\]
Moreover, $I^{(<)}(\sigma,H)$ is non-singular in this regime, hence no second projection is needed.
\end{proposition}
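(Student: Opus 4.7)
The plan is to reduce the bivariate claim to a scalar quadratic-form CLT via the Cramér–Wold device, apply Lemma~\ref{lem:QF-CLT} termwise, and read off the limiting covariance directly from the trace asymptotics already established in Lemma~\ref{lem:TA-subcritical} and Lemma~\ref{lem:subcritical-integrals}. The structure mirrors Proposition~\ref{prop:projected-joint-clt} but is strictly simpler: the operator-over-Frobenius smallness has been packaged in Lemma~\ref{lem:opF-subcritical}, the relevant spectral integrals converge because $f_H,\dot f_H\in L^2$, and — as I will verify — the limit $I^{(<)}$ is already nonsingular, so no orthogonalization step is required.

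Concretely, using Lemma~\ref{lem:explicit-linear} and $\gamma_n=\sigma^2\Delta_n^{p}$, I would first rewrite
\[
\frac{1}{v_n}\binom{S_{\sigma,n}}{R_{H,n}}
=\binom{\dfrac{\sigma}{\sqrt n}\,Q(C_n)}{\dfrac{\sigma^2}{2\sqrt n}\,Q(D_n)},
\qquad Q(A):=Z_n^{\top}AZ_n-\Tr(A),
\]
so that for every $u=(u_1,u_2)\in\mathbb R^2$,
\[
u_1\,\frac{S_{\sigma,n}}{v_n}+u_2\,\frac{R_{H,n}}{v_n}=\frac{1}{\sqrt n}\,Q(M_{n,u}),
\qquad M_{n,u}:=u_1\sigma\,C_n+u_2\,\tfrac{\sigma^2}{2}\,D_n.
\]
By Wick's formula $\Var(n^{-1/2}Q(M_{n,u}))=2n^{-1}\Tr(M_{n,u}^{2})$; expanding this bilinear form, invoking Lemmas~\ref{lem:TA-subcritical}--\ref{lem:subcritical-integrals}, and using the evenness of $f_H$ and $\dot f_H$ to pass from $\int_{-\pi}^{\pi}$ to $2\int_{0}^{\pi}$, yields $2n^{-1}\Tr(M_{n,u}^2)\to u^{\top}I^{(<)}u$. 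For the op/F condition I would combine Lemma~\ref{lem:opF-subcritical} with the triangle inequality $\opnorm{M_{n,u}}\le|u_1|\sigma\opnorm{C_n}+|u_2|\tfrac{\sigma^2}{2}\opnorm{D_n}=O(n^{p}\log n)$ and the lower bound $\Fnorm{M_{n,u}}^2=\Tr(M_{n,u}^2)\asymp n$ (established below), giving $\opnorm{M_{n,u}}/\Fnorm{M_{n,u}}=O(n^{p-1/2}\log n)\to 0$ since $p<1/2$. Lemma~\ref{lem:QF-CLT} then delivers $n^{-1/2}Q(M_{n,u})\Rightarrow\mathcal N(0,u^{\top}I^{(<)}u)$, and Cramér–Wold upgrades this to the joint bivariate limit. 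The second statement of the proposition follows from $M_n^{(1)}(S_{\sigma,n},S_{H,n})^{\top}=(S_{\sigma,n},R_{H,n})^{\top}$.

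The remaining point is $u^{\top}I^{(<)}u>0$ for every $u\neq 0$, which is both the final claim and what justifies $\Fnorm{M_{n,u}}\asymp\sqrt n$ in the op/F step. The determinant of $I^{(<)}$ is a positive multiple of
\[
\int_{0}^{\pi}f_H(\lambda)^2\,d\lambda\cdot\int_{0}^{\pi}\dot f_H(\lambda)^2\,d\lambda-\Bigl(\int_{0}^{\pi}f_H(\lambda)\dot f_H(\lambda)\,d\lambda\Bigr)^{2},
\]
which is strictly positive by Cauchy–Schwarz in $L^{2}([0,\pi])$: equality would force $\dot f_H/f_H$ to be constant, whereas Lemma~\ref{lem:FH-fH} identifies $\dot f_H/f_H=-2\log|2\sin(\lambda/2)|+\dot\ell_H/\ell_H$, which is manifestly non-constant owing to the logarithmic singularity at the origin. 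The main obstacle I anticipate is not the CLT machinery itself but ensuring that the cross-trace $\Tr(C_nD_n)$ indeed admits the claimed $o(n)$ Avram-type remainder despite the logarithmic Fisher–Hartwig factor carried by $\dot f_H$; once Lemma~\ref{lem:TA-subcritical} is taken for granted, the rest of the argument is purely algebraic, and the LAN expansion can be obtained by the Taylor-expansion scheme of Section~\ref{sec:LAN} essentially verbatim, with the diagonal $I^\perp$ replaced by the full-rank $I^{(<)}$.
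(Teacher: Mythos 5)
Your proposal follows essentially the same route as the paper's proof: reduce to a scalar quadratic form via Cram\'er--Wold, verify the $\opnorm{\cdot}/\Fnorm{\cdot}\to 0$ condition from Lemma~\ref{lem:opF-subcritical}, apply Lemma~\ref{lem:QF-CLT}, and identify the covariance from the trace asymptotics of Lemmas~\ref{lem:TA-subcritical}--\ref{lem:subcritical-integrals}. Your two small additions -- the Cauchy--Schwarz argument for $\det I^{(<)}>0$ (equality would force $\dot f_H/f_H$ constant, contradicting its logarithmic singularity) and the explicit check that $\Fnorm{M_{n,u}}^2\asymp n$ for every $u\neq 0$ before invoking the op/F condition on the linear combination -- are correct and in fact fill in details the paper's proof leaves implicit.
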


\begin{proof}
First we construct the one-dimensional CLT via op/F structure. Fix $u=(u_1,u_2)\in\mathbb R^2$ and set
\[
M_{n,u}:=u_1\sigma^{-1}C_n+u_2\tfrac12 D_n,
\qquad
Q_n(M_{n,u}):=Z_{n}^\top M_{n,u}Z_{n}-\Tr(M_{n,u}).
\]
Then
\[
u_1\frac{S_{\sigma,n}}{v_n}+u_2\frac{R_{H,n}}{v_n}
=
\frac{\gamma_n}{v_n}\,Q_n(M_{n,u}).
\]
By Lemma~\ref{lem:opF-subcritical}, $\opnorm{M_{n,u}}/\Fnorm{M_{n,u}}\to0$, so Lemma~\ref{lem:QF-CLT} yields
\[
\frac{Q_n(M_{n,u})}{\sqrt{2}\,\Fnorm{M_{n,u}}}\Rightarrow \mathcal N(0,1).
\]

Now we should identify the asymptotic variance. By definition of $M_{u,v}$,
\[
\Var\!\left(\frac{\gamma_n}{v_n}Q_n(M_{n,u})\right)
=
\frac{\gamma_n^2}{v_n^2}\cdot 2\,\Fnorm{M_{n,u}}^2
=
\frac{\gamma_n^2}{n\Delta_n^{2p}}\cdot 2\,\Tr(M_{n,u}^2).
\]
Using bilinearity and Wick's identity, $\Tr(M_{n,u}^2)=u_1^2\sigma^{-2}\Tr(C_n^2)+u_1u_2\sigma^{-1}\Tr(C_nD_n)+\frac{u_2^2}{4}\Tr(D_n^2)$.
Lemma~\ref{lem:TA-subcritical} and Lemma~\ref{lem:subcritical-integrals} give
\[
\frac{1}{n}\Tr(C_n^2)\to \frac{1}{2\pi}\int_{-\pi}^{\pi}f_H(\lambda)^2\,d\lambda,\quad
\frac{1}{n}\Tr(C_nD_n)\to \frac{1}{2\pi}\int_{-\pi}^{\pi}f_H(\lambda)\dot f_H(\lambda)\,d\lambda,
\]
\[
\frac{1}{n}\Tr(D_n^2)\to \frac{1}{2\pi}\int_{-\pi}^{\pi}\dot f_H(\lambda)^2\,d\lambda.
\]
Finally, since $\gamma_n^2/(n\Delta_n^{2p})=\sigma^4\Delta_n^{2p}/(n\Delta_n^{2p})=\sigma^4/n$, the prefactor combines with the $n$-scale of the traces to yield the stated limits, and rewriting $[-\pi,\pi]$ as $2[0,\pi]$ gives exactly the covariance matrix $I^{(<)}(\sigma,H)$.

At last we will construct the joint convergence using the Cram\'er--Wold theorem: every fixed linear combination converges to a centered normal with the variance prescribed by $I^{(<)}(\sigma,H)$, hence the vector converges jointly.
\end{proof}

\section{The regime $0<H<\tfrac12$}\label{app:HltHalf-eps}
\setcounter{equation}{0} 
\renewcommand{\theequation}{D.\arabic{equation}} 
In the fBm-dominated regime $0<H<\frac12$, we also consider the important part--the CLT of the score function. When this one has been established, we can prove the LAN property the same way as in \cite{BrousteFukasawa18}. In this case it is more convenient to work with the small parameter
\[
\varepsilon_n:=\Delta_n^{\,1-2H}\ \downarrow\ 0,
\]
rather than $\gamma_n=\sigma^2\Delta_n^{2H-1}\to\infty$.
We start from the exact covariance of the observed increment vector $X_n=(X_{n,1},\dots,X_{n,n})^\top$:
\begin{equation}\label{eq:cov-HltHalf-original}
V_n(\sigma,H)
=\Var(X_n)
=\sigma^2\Delta_n^{2H}\,T_n(H)+\Delta_n I_n,
\end{equation}
where $T_n(H)=T_n(f_H)$ is the Toeplitz covariance matrix of the standard fGn increment sequence.

Factoring out $\Delta_n^{2H}$ yields the equivalent representation
\begin{equation}\label{eq:cov-HltHalf-eps}
V_n(\sigma,H)
=\Delta_n^{2H}\,B_n(\sigma,H),
\qquad
B_n(\sigma,H):=\sigma^2T_n(H)+\varepsilon_n I_n.
\end{equation}
Note that
\[
\gamma_n=\sigma^2\Delta_n^{2H-1}=\frac{\sigma^2}{\varepsilon_n},
\qquad
A_n:=I_n+\gamma_nT_n(H)=\varepsilon_n^{-1}B_n(\sigma,H),
\]
so the two normalizations $V_n=\Delta_n A_n$ and $V_n=\Delta_n^{2H}B_n$ are identical.

Under $\mathbb P_{\sigma,H}$ define
\begin{equation}\label{eq:Z-whiten-eps}
Z_{n}:=V_n(\sigma,H)^{-1/2}X_n
=\Delta_n^{-H}\,B_n(\sigma,H)^{-1/2}X_n
\sim \mathcal N(0,I_n).
\end{equation}
This is the same Gaussian vector $Z_{n}$ as in Lemma~\ref{lem:explicit-linear}; we only rewrote the whitening in terms of $B_n$. Now it is natural to define
\begin{equation}\label{eq:CtildeDtilde-def}
\widetilde C_n:=B_n(\sigma,H)^{-1/2}\,T_n(H)\,B_n(\sigma,H)^{-1/2},
\qquad
\widetilde D_n:=B_n(\sigma,H)^{-1/2}\,\dot T_n(H)\,B_n(\sigma,H)^{-1/2}.
\end{equation}
These matrices are related to the matrices $C_n,D_n$ in Lemma~\ref{lem:explicit-linear} by
\begin{equation}\label{eq:Ctilde-relation}
C_n=\varepsilon_n\,\widetilde C_n,\qquad D_n=\varepsilon_n\,\widetilde D_n,
\end{equation}
since $A_n^{-1/2}=\sqrt{\varepsilon_n}\,B_n^{-1/2}$. We will write the real formula of the score function: 

\begin{lemma}\label{lem:score-eps-form}
Assume $0<H<\frac12$. With $Z_{n}$ as in \eqref{eq:Z-whiten-eps} and $(\widetilde C_n,\widetilde D_n)$ as in
\eqref{eq:CtildeDtilde-def}, the score representations \eqref{eq:Ssigma-quad}--\eqref{eq:RH-def} can be rewritten as
\begin{align}
S_{\sigma,n}
&=\sigma\Big(Z_{n}^\top \widetilde C_n Z_{n}-\Tr(\widetilde C_n)\Big),\label{eq:Ssigma-quad-eps}\\
R_{H,n}
&=\frac{\sigma^2}{2}\Big(Z_{n}^\top \widetilde D_n Z_{n}-\Tr(\widetilde D_n)\Big),\label{eq:RH-def-eps}\\
S_{H,n}
&=\sigma\ln(\Delta_n)\,S_{\sigma,n}+R_{H,n},\label{eq:Hscore-decomp-eps}
\end{align}
where $R_{H,n}$ is the same remainder as in \eqref{eq:RH-def}.
\end{lemma}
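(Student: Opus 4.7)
The plan is to treat the lemma as a bookkeeping exercise: everything follows from the fact that $B_n(\sigma,H)=\varepsilon_n A_n$, together with the score representations already established in Lemma~\ref{lem:explicit-linear}. First I would verify this scalar relationship. Using $\gamma_n=\sigma^2\Delta_n^{2H-1}=\sigma^2/\varepsilon_n$, one computes
\[
\varepsilon_n A_n=\varepsilon_n\bigl(I_n+\tfrac{\sigma^2}{\varepsilon_n}T_n(H)\bigr)=\varepsilon_n I_n+\sigma^2 T_n(H)=B_n(\sigma,H),
\]
so that $A_n$ and $B_n$ are positive-definite matrices differing only by the positive scalar $\varepsilon_n$. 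Taking the (commuting) positive-definite square roots then gives $B_n^{-1/2}=\varepsilon_n^{-1/2}A_n^{-1/2}$, and consequently
\[
\widetilde C_n=B_n^{-1/2}T_n(H)B_n^{-1/2}=\varepsilon_n^{-1}A_n^{-1/2}T_n(H)A_n^{-1/2}=\varepsilon_n^{-1}C_n,
\]
and likewise $\widetilde D_n=\varepsilon_n^{-1}D_n$, establishing \eqref{eq:Ctilde-relation}.

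Next I would check that the whitened vector $Z_n$ in \eqref{eq:Z-whiten-eps} coincides with the one used in Lemma~\ref{lem:explicit-linear}. Since $V_n(\sigma,H)=\Delta_n^{2H}B_n=\Delta_n A_n$, the two factorizations give $V_n^{-1/2}=\Delta_n^{-H}B_n^{-1/2}=\Delta_n^{-1/2}A_n^{-1/2}$ (the identity $\Delta_n^{-H}\varepsilon_n^{-1/2}=\Delta_n^{-H}\Delta_n^{-(1-2H)/2}=\Delta_n^{-1/2}$ makes these match), so $Z_n\sim\mathcal N(0,I_n)$ is the same random vector in both parametrizations.

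With these two observations in hand, the three formulas reduce to direct substitution into the score identities from Lemma~\ref{lem:explicit-linear}. For the $\sigma$-score,
\[
S_{\sigma,n}=\frac{\gamma_n}{\sigma}\bigl(Z_n^\top C_n Z_n-\Tr(C_n)\bigr)=\frac{\sigma^2/\varepsilon_n}{\sigma}\cdot\varepsilon_n\bigl(Z_n^\top \widetilde C_n Z_n-\Tr(\widetilde C_n)\bigr)=\sigma\bigl(Z_n^\top \widetilde C_n Z_n-\Tr(\widetilde C_n)\bigr),
\]
which is \eqref{eq:Ssigma-quad-eps}. The identical cancellation of $\varepsilon_n$ against $\gamma_n=\sigma^2/\varepsilon_n$ gives \eqref{eq:RH-def-eps} from \eqref{eq:RH-def}. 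Finally, the decomposition \eqref{eq:Hscore-decomp-eps} is nothing but \eqref{eq:Hscore-decomp} restated; since the derivation of that identity in Lemma~\ref{lem:explicit-linear} used only $\partial_H\gamma_n=2\gamma_n\ln(\Delta_n)$ and the sandwich structure of the exact score, it is independent of whether $H>1/2$ or $H<1/2$ and therefore carries over verbatim.

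The main step here is really conceptual rather than technical: one must recognize that the "natural" small parameter in the regime $0<H<1/2$ is $\varepsilon_n$, and that the rescaling $A_n\mapsto B_n=\varepsilon_n A_n$ converts the $\gamma_n\to\infty$ asymptotics into the more convenient $\varepsilon_n\to 0$ asymptotics without changing any of the underlying likelihood geometry. There is no real obstacle; the only potential pitfall is confusing the two whitening factorizations of $V_n$, which is resolved once one checks the identity $\Delta_n^{-H}\varepsilon_n^{-1/2}=\Delta_n^{-1/2}$.
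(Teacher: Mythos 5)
Your proof is correct and follows essentially the same route as the paper's: both reduce the lemma to the scalar identity $B_n=\varepsilon_n A_n$ (equivalently $C_n=\varepsilon_n\widetilde C_n$, $D_n=\varepsilon_n\widetilde D_n$) and then cancel $\varepsilon_n$ against $\gamma_n=\sigma^2/\varepsilon_n$ in the quadratic forms of Lemma~\ref{lem:explicit-linear}. The extra checks you include (the square-root scaling, the consistency of the two whitening factorizations via $\Delta_n^{-H}\varepsilon_n^{-1/2}=\Delta_n^{-1/2}$, and the parameter-independence of the decomposition of $S_{H,n}$) are details the paper leaves implicit, so nothing is missing.
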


\begin{proof}
Using \eqref{eq:Ctilde-relation} and $\gamma_n=\sigma^2/\varepsilon_n$, the identity \eqref{eq:Ssigma-quad} becomes
\[
S_{\sigma,n}
=\frac{\gamma_n}{\sigma}\Big(Z_{n}^\top (\varepsilon_n\widetilde C_n) Z_{n}-\Tr(\varepsilon_n\widetilde C_n)\Big)
=\sigma\Big(Z_{n}^\top \widetilde C_n Z_{n}-\Tr(\widetilde C_n)\Big),
\]
which is \eqref{eq:Ssigma-quad-eps}. The same argument applied to \eqref{eq:RH-def} gives \eqref{eq:RH-def-eps}.
Finally, \eqref{eq:Hscore-decomp-eps} is exactly \eqref{eq:Hscore-decomp}.
\end{proof}

\subsection{Toeplitz trace approximations for $(\widetilde C_n,\widetilde D_n)$}

Define the (triangular-array) symbols
\begin{equation}\label{eq:HltHalf-tilde-symbols}
\widetilde c_n(\lambda):=\frac{f_H(\lambda)}{\sigma^2 f_H(\lambda)+\varepsilon_n},
\qquad
\widetilde d_n(\lambda):=\frac{\dot f_H(\lambda)}{\sigma^2 f_H(\lambda)+\varepsilon_n},
\qquad \lambda\in[-\pi,\pi].
\end{equation}
Then $0\le \widetilde c_n(\lambda)\le \sigma^{-2}$ and $\widetilde d_n(\lambda)=\widetilde c_n(\lambda)\,b_H(\lambda)$ with
\begin{equation}\label{eq:HltHalf-bH-def}
b_H(\lambda):=\partial_H\log f_H(\lambda)=\frac{\dot f_H(\lambda)}{f_H(\lambda)}.
\end{equation}

\begin{lemma}\label{lem:HltHalf-TA}
Assume $0<H<\frac12$ and Lemma~\ref{lem:FH-fH}. Then
\begin{align}
\Tr(\widetilde C_n^2)
&=
\frac{n}{2\pi}\int_{-\pi}^{\pi}\widetilde c_n(\lambda)^2\,d\lambda
+o(n),\label{eq:HltHalf-TA-C2}\\
\Tr(\widetilde C_n\widetilde D_n)
&=
\frac{n}{2\pi}\int_{-\pi}^{\pi}\widetilde c_n(\lambda)\widetilde d_n(\lambda)\,d\lambda
+o(n),\label{eq:HltHalf-TA-CD}\\
\Tr(\widetilde D_n^2)
&=
\frac{n}{2\pi}\int_{-\pi}^{\pi}\widetilde d_n(\lambda)^2\,d\lambda
+o(n).\label{eq:HltHalf-TA-D2}
\end{align}
\end{lemma}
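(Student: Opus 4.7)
The plan is to reduce Lemma~\ref{lem:HltHalf-TA} to a standard Szeg\H{o}--Avram--Dahlhaus trace approximation for products of Toeplitz matrices with an $n$-dependent inverse symbol, in the spirit of Proposition~\ref{prop:GSzego-inv}, but in a much tamer regime because $f_H$ is bounded when $H<1/2$.

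First, I would use cyclicity of the trace together with the identity $B_n=T_n(b_n)$ for $b_n(\lambda):=\sigma^2 f_H(\lambda)+\varepsilon_n$ to rewrite each of the three traces in the common form
\[
\Tr\!\bigl(T_n(u)\,B_n^{-1}\,T_n(v)\,B_n^{-1}\bigr),\qquad u,v\in\{f_H,\dot f_H\},
\]
which upon being paired with $b_n^{-2}$ reproduces exactly the integrands $\widetilde c_n^2$, $\widetilde c_n\widetilde d_n$, $\widetilde d_n^2$, respectively.

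Second, I would check the regularity of the symbols in this regime. Since $p=2H-1<0$, Lemma~\ref{lem:FH-fH} gives $f_H(\lambda)=|2\sin(\lambda/2)|^{|p|}\ell_H(\lambda)$ with $\ell_H\in C^2$, hence $f_H\in L^\infty\cap C([-\pi,\pi])$ and in fact $f_H(\lambda)\to0$ at the origin. The derivative $\dot f_H$ differs from $-2\log|2\sin(\lambda/2)|\,f_H(\lambda)$ by a smooth term, so $\dot f_H\in L^q$ for every $q<\infty$. Because $b_n\ge\varepsilon_n>0$, the matrix $B_n$ is uniformly invertible for each fixed $n$, and one has $\widetilde c_n\le 1/\sigma^2$ uniformly in $n$. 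Consequently the integrands $\widetilde c_n^2$, $\widetilde c_n\widetilde d_n$, $\widetilde d_n^2$ all belong to $L^1([-\pi,\pi])$ with at most an integrable logarithmic factor of the form $(1+|\log|\lambda||)^{k(u)+k(v)}$, exactly as in Lemma~\ref{lem:verify-GSzego}.

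Third, I would invoke a generalized Szeg\H{o} trace approximation for the triangular-array inverse symbol, namely
\[
\Tr\!\bigl(T_n(u)B_n^{-1}T_n(v)B_n^{-1}\bigr)=\frac{n}{2\pi}\int_{-\pi}^{\pi}\frac{u(\lambda)v(\lambda)}{b_n(\lambda)^2}\,d\lambda+O\!\bigl(\log^{k(u)+k(v)} n\bigr),
\]
which is standard for $L^2$ symbols and follows from combining the product-trace theorem of Avram~\cite{Avram88} with the approximation $B_n^{-1}-T_n(1/b_n)\to0$ in a suitable operator sense, cf.\ Dahlhaus~\cite{Dahlhaus89} and Grenander--Szeg\H{o}~\cite{GrenanderSzego}. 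The resulting remainder is of order $\log^k n=o(n)$ with $k\in\{0,1,2\}$ according to the number of $\dot f_H$ factors, which is exactly what is claimed in \eqref{eq:HltHalf-TA-C2}--\eqref{eq:HltHalf-TA-D2}.

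The main obstacle I anticipate is making the Hilbert--Schmidt approximation $B_n^{-1}\approx T_n(1/b_n)$ quantitative in the regime where $b_n$ attains its minimum $\varepsilon_n\to0$ at the unique zero $\lambda=0$ of $f_H$: although $\|B_n^{-1}\|_{\op}\le 1/\varepsilon_n$ grows with $n$, the regularized ratio $1/b_n$ remains in $L^2$ uniformly (up to a logarithmic factor) because $f_H$ vanishes only like $|\lambda|^{|p|}$, which is integrable at the origin. This is strictly easier than in Proposition~\ref{prop:GSzego-inv}, where both the low-frequency singularity of $f_H$ and the regularization $\gamma_n$ had to be controlled simultaneously; here $f_H$ has no singularity at all, and the only $n$-dependence comes through $\varepsilon_n$ in the denominator.
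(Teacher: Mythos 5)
Your proposal is correct and follows essentially the same route as the paper's proof: both reduce to the observation that for $H<1/2$ the symbol $\widetilde c_n$ is uniformly bounded by $\sigma^{-2}$ and $\widetilde d_n=\widetilde c_n b_H$ is uniformly square-integrable (the log-derivative $b_H$ having only a logarithmic singularity at the origin), and then invoke standard Szeg\H{o}/Avram-type trace theorems for the triangular-array symbols. Two minor caveats that do not affect the conclusion: the remainder rate $O(\log^{k(u)+k(v)}n)$ you quote is stronger than what the cited references actually deliver (they give $o(n)$, which is all the lemma needs), and your side remark that $1/b_n$ stays uniformly in $L^2$ fails for $H\le 1/4$ — but neither point is used essentially, since the relevant integrands $uv/b_n^2$ are uniformly dominated by $\sigma^{-4}\bigl(1+|\log|\lambda||\bigr)^{k(u)+k(v)}$.
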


\begin{proof}
Since $H<1/2$, the spectral density $f_H$ is bounded on $[-\pi,\pi]$ and strictly positive on $[-\pi,\pi]\setminus\{0\}$.
Moreover, by Lemma~\ref{lem:FH-fH}, $b_H(\lambda)=\partial_H\log f_H(\lambda)$ has at most a logarithmic singularity at $\lambda=0$,
hence $b_H\in L^2([-\pi,\pi])$. Therefore $\widetilde c_n$ is uniformly bounded and
$|\widetilde d_n(\lambda)|\le \sigma^{-2}|b_H(\lambda)|$ is square-integrable uniformly in $n$.

In this $L^2$ setting, standard Szeg\H{o}/Avram-type trace theorems for products of Toeplitz matrices apply to the
triangular-array symbols $\widetilde c_n,\widetilde d_n$ and yield \eqref{eq:HltHalf-TA-C2}--\eqref{eq:HltHalf-TA-D2} with $o(n)$ remainder.
\end{proof}

\subsection{Limits of the trace integrals and the constants $T_1(H),T_2(H)$}

Define
\begin{equation}\label{eq:HltHalf-T1T2-def}
T_1(H):=\frac{1}{2\pi}\int_{-\pi}^{\pi} b_H(\lambda)\,d\lambda,
\qquad
T_2(H):=\frac{1}{4\pi}\int_{-\pi}^{\pi} b_H(\lambda)^2\,d\lambda.
\end{equation}

\begin{lemma}[Integral limits]\label{lem:HltHalf-integral-limits}
Assume $0<H<\frac12$ and $\varepsilon_n\to0$. Then
\begin{align}
\int_{-\pi}^{\pi}\widetilde c_n(\lambda)^2\,d\lambda
&\longrightarrow \frac{2\pi}{\sigma^4},\label{eq:HltHalf-int-c2}\\
\int_{-\pi}^{\pi}\widetilde c_n(\lambda)\widetilde d_n(\lambda)\,d\lambda
&\longrightarrow \frac{2\pi}{\sigma^4}\,T_1(H),\label{eq:HltHalf-int-cd}\\
\int_{-\pi}^{\pi}\widetilde d_n(\lambda)^2\,d\lambda
&\longrightarrow \frac{4\pi}{\sigma^4}\,T_2(H).\label{eq:HltHalf-int-d2}
\end{align}
\end{lemma}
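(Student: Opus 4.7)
\medskip
\noindent\textbf{Proof plan for Lemma~\ref{lem:HltHalf-integral-limits}.}
The strategy is a straightforward dominated convergence argument, where the essential inputs are (i) the pointwise a.e.\ convergence $\widetilde c_n(\lambda)\to 1/\sigma^2$ and (ii) integrable domination for each of the three integrands. First I would observe that $f_H(\lambda)\ge 0$ on $[-\pi,\pi]$ and $f_H(\lambda)>0$ for $\lambda\in[-\pi,\pi]\setminus\{0\}$ (the zero set has Lebesgue measure zero). Therefore, for a.e.\ $\lambda$,
\[
\widetilde c_n(\lambda)=\frac{1}{\sigma^2+\varepsilon_n/f_H(\lambda)}\ \longrightarrow\ \frac{1}{\sigma^2},\qquad \varepsilon_n\downarrow 0,
\]
while the uniform bound $0\le \widetilde c_n(\lambda)\le 1/\sigma^2$ holds on all of $[-\pi,\pi]$ for every $n$.

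Next, I would collect the necessary regularity of $b_H=\dot f_H/f_H$ in this regime from Lemma~\ref{lem:FH-fH}: because $\dot f_H(\lambda)=f_H(\lambda)\bigl(C_H-2\ln|\lambda|+\rho_H(\lambda)\bigr)$ with $\rho_H(\lambda)\to 0$, the function $b_H$ is continuous on $[-\pi,\pi]\setminus\{0\}$ and satisfies $|b_H(\lambda)|\le C\bigl(1+|\log|\lambda||\bigr)$ near the origin. Consequently $b_H\in L^p([-\pi,\pi])$ for every $p\in[1,\infty)$; in particular $b_H,\,b_H^2\in L^1([-\pi,\pi])$, which makes the constants $T_1(H)$ and $T_2(H)$ in \eqref{eq:HltHalf-T1T2-def} well defined and finite. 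Using the identity $\widetilde d_n=\widetilde c_n\,b_H$, the three integrands can be rewritten as
\[
\widetilde c_n^2,\qquad \widetilde c_n^2\,b_H,\qquad \widetilde c_n^2\,b_H^2,
\]
each dominated respectively by the $n$-independent, $L^1$ majorants $\sigma^{-4}$, $\sigma^{-4}|b_H|$ and $\sigma^{-4}b_H^2$.

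Then I would apply the dominated convergence theorem to each integral separately. This yields
\[
\int_{-\pi}^{\pi}\widetilde c_n^2\,d\lambda\to\frac{2\pi}{\sigma^4},\qquad
\int_{-\pi}^{\pi}\widetilde c_n^2\,b_H\,d\lambda\to\frac{1}{\sigma^4}\int_{-\pi}^{\pi}b_H\,d\lambda=\frac{2\pi}{\sigma^4}T_1(H),
\]
and
\[
\int_{-\pi}^{\pi}\widetilde c_n^2\,b_H^2\,d\lambda\to\frac{1}{\sigma^4}\int_{-\pi}^{\pi}b_H^2\,d\lambda=\frac{4\pi}{\sigma^4}T_2(H),
\]
which are exactly \eqref{eq:HltHalf-int-c2}--\eqref{eq:HltHalf-int-d2}. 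There is no real obstacle: the only subtlety is the near-origin behavior, which would require care if $f_H$ had a pole (as in the case $H>1/2$), but in the regime $0<H<1/2$ we have $p=2H-1<0$, so $f_H(\lambda)\sim c_H|\lambda|^{1-2H}$ vanishes at the origin rather than blowing up. This vanishing means the pointwise limit $1/\sigma^2$ breaks down only on a measure-zero set, and combined with the uniform upper bound $1/\sigma^2$ it poses no problem for dominated convergence. The mildest point to double-check is that the logarithmic singularity of $b_H$ at zero is indeed in $L^2$ (hence in $L^1$), which is immediate from $\int_0^{\pi}(\log\lambda)^2\,d\lambda<\infty$.
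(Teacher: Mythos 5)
Your argument is correct and is essentially identical to the paper's own proof: both establish the a.e.\ pointwise limit $\widetilde c_n\to 1/\sigma^2$, use the uniform bound $0\le\widetilde c_n\le\sigma^{-2}$ together with $b_H,b_H^2\in L^1$ (from the logarithmic singularity in Lemma~\ref{lem:FH-fH}) as dominating functions, and conclude by dominated convergence. Your added observation that $f_H$ vanishes (rather than blows up) at the origin when $H<1/2$, so the pointwise limit fails only on a null set, is a correct and harmless clarification not spelled out in the paper.
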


\begin{proof}
For each $\lambda\neq0$, since $f_H(\lambda)>0$ we have
\[
\widetilde c_n(\lambda)=\frac{f_H(\lambda)}{\sigma^2 f_H(\lambda)+\varepsilon_n}\ \longrightarrow\ \frac{1}{\sigma^2}.
\]
Moreover $0\le \widetilde c_n\le \sigma^{-2}$, hence dominated convergence gives \eqref{eq:HltHalf-int-c2}.

Next, $\widetilde d_n(\lambda)=\widetilde c_n(\lambda)\,b_H(\lambda)$ and $b_H\in L^1([-\pi,\pi])$ (log singularity),
so $|\widetilde c_n b_H|\le \sigma^{-2}|b_H|$ is integrable and dominated convergence yields \eqref{eq:HltHalf-int-cd}.
Finally, $b_H^2\in L^1([-\pi,\pi])$ and $|\widetilde d_n|^2=\widetilde c_n^2 b_H^2\le \sigma^{-4}b_H^2$,
hence dominated convergence yields \eqref{eq:HltHalf-int-d2}.
\end{proof}

\subsection{Quotien of Two Norms}
With the trace approximation, we can easily construct the op/F
\begin{lemma}\label{lem:HltHalf-opF}
Fix $0<H<\frac12$. Then
\[
\frac{\opnorm{\widetilde C_n}}{\Fnorm{\widetilde C_n}}\longrightarrow 0,
\qquad
\frac{\opnorm{\widetilde D_n}}{\Fnorm{\widetilde D_n}}\longrightarrow 0.
\]
\end{lemma}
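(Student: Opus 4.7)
The plan is to combine two separate estimates: a straightforward operator-norm bound (constant for $\widetilde C_n$, logarithmic for $\widetilde D_n$) together with the $\sqrt{n}$-scale Frobenius norms extracted from Lemma~\ref{lem:HltHalf-TA} and Lemma~\ref{lem:HltHalf-integral-limits}. Since these trace asymptotics are already available, the only work is on the operator-norm side.

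First I would handle $\widetilde C_n$. Because $T_n(H)$ is positive semidefinite with eigenvalues $\lambda_i \ge 0$ and $B_n = \sigma^2 T_n(H) + \varepsilon_n I_n$ commutes with it, the eigenvalues of $\widetilde C_n$ are $\lambda_i/(\sigma^2 \lambda_i + \varepsilon_n)$. The map $x \mapsto x/(\sigma^2 x + \varepsilon_n)$ is increasing on $[0,\infty)$ with supremum $1/\sigma^2$, hence $\opnorm{\widetilde C_n} \le 1/\sigma^2$. On the other hand, Lemma~\ref{lem:HltHalf-TA} and Lemma~\ref{lem:HltHalf-integral-limits} together give $\Fnorm{\widetilde C_n}^2 = \Tr(\widetilde C_n^2) = n/\sigma^4 + o(n)$, hence $\opnorm{\widetilde C_n}/\Fnorm{\widetilde C_n} = O(n^{-1/2}) \to 0$.

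For $\widetilde D_n$ the strategy mirrors Lemma~\ref{lem:D-op}. Since $B_n$ is a polynomial in $T_n(H)$, it commutes with $T_n(H)^{1/2}$. Setting $\tilde B_n := B_n^{-1/2} T_n(H)^{1/2}$ and $E_n := T_n(H)^{-1/2}\dot T_n(H) T_n(H)^{-1/2}$, we factor $\widetilde D_n = \tilde B_n E_n \tilde B_n$, and the identity $\opnorm{\tilde B_n}^2 = \opnorm{\widetilde C_n} \le \sigma^{-2}$ reduces the problem to showing $\opnorm{E_n} = O(\log n)$. Exactly as in the proof of Lemma~\ref{lem:D-op}, standard Toeplitz preconditioning yields $E_n = T_n(b_H) + R_n$ with $\opnorm{R_n} = O(1)$, where $b_H = \partial_H \log f_H$ has Fourier coefficients $O(1/|k|)$ and therefore $\opnorm{T_n(b_H)} = O(\log n)$. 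Consequently $\opnorm{\widetilde D_n} = O(\log n)$, while $\Fnorm{\widetilde D_n}^2 = (2 T_2(H)/\sigma^4)\, n + o(n) \asymp n$ since $T_2(H) > 0$ (because $b_H$ is nonconstant). The ratio is then $O(\log n/\sqrt n) \to 0$.

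The main potential obstacle is the preconditioning bound $\opnorm{E_n} = O(\log n)$, but this is precisely the step already established (via B\"ottcher--Silbermann \cite{BottcherSilb}) in Lemma~\ref{lem:D-op}, and the argument transfers verbatim since it only uses the Fisher--Hartwig structure of $f_H$ and the log-decay of the Fourier coefficients of $b_H$, both of which remain valid in the regime $0 < H < 1/2$. In particular, no new trace estimate beyond those already proven is required.
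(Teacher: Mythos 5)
Your proof is correct and follows essentially the same route as the paper: the eigenvalue bound $\opnorm{\widetilde C_n}\le\sigma^{-2}$ via simultaneous diagonalization of $B_n$ and $T_n(H)$, the factorization $\widetilde D_n=G_nE_nG_n$ with $\opnorm{G_n}^2=\opnorm{\widetilde C_n}$ and the $O(\log n)$ preconditioning bound on $E_n$ borrowed from Lemma~\ref{lem:D-op}, and the $\sqrt{n}$-scale Frobenius norms from Lemmas~\ref{lem:HltHalf-TA} and~\ref{lem:HltHalf-integral-limits}. No issues.
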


\begin{proof}
First we will calculate the operator norm bounds. Since $B_n=\sigma^2T_n(H)+\varepsilon_n I_n$ is a polynomial in $T_n(H)$, the matrices $B_n$ and $T_n(H)$ commute and are simultaneously diagonalizable.
Hence the eigenvalues of $\widetilde C_n=B_n^{-1/2}T_n(H)B_n^{-1/2}$ are of the form $\lambda/(\sigma^2\lambda+\varepsilon_n)$ with $\lambda\ge0$,
so
\[
\opnorm{\widetilde C_n}\le \sup_{\lambda\ge0}\frac{\lambda}{\sigma^2\lambda+\varepsilon_n}\le \frac{1}{\sigma^2}.
\]
For $\widetilde D_n=B_n^{-1/2}\dot T_n(H)B_n^{-1/2}$, set
\[
G_n:=T_n(H)^{1/2}B_n^{-1/2}=B_n^{-1/2}T_n(H)^{1/2},\qquad
E_n:=T_n(H)^{-1/2}\dot T_n(H)\,T_n(H)^{-1/2}.
\]
Then $\widetilde D_n=G_nE_nG_n$ and thus
\[
\opnorm{\widetilde D_n}\le \opnorm{G_n}^2\,\opnorm{E_n}=\opnorm{\widetilde C_n}\,\opnorm{E_n}\le \frac{1}{\sigma^2}\,\opnorm{E_n}.
\]
By the same argument as in Lemma~\ref{lem:D-op} (Toeplitz log-derivative symbol and Toeplitz--Hankel sandwich),
one has $\opnorm{E_n}=O(\log n)$, hence $\opnorm{\widetilde D_n}=O(\log n)$.

Now we will study the Frobenius norms.
By Lemma~\ref{lem:HltHalf-TA} and Lemma~\ref{lem:HltHalf-integral-limits},
\[
\Fnorm{\widetilde C_n}^2=\Tr(\widetilde C_n^2)=\frac{n}{2\pi}\int_{-\pi}^{\pi}\widetilde c_n(\lambda)^2\,d\lambda+o(n)
=\frac{n}{\sigma^4}\bigl(1+o(1)\bigr),
\]
so $\Fnorm{\widetilde C_n}\asymp \sqrt n$. Similarly, \eqref{eq:HltHalf-TA-D2} and \eqref{eq:HltHalf-int-d2} give $\Fnorm{\widetilde D_n}\asymp \sqrt n$.

When the two norm are well prepared, we can easily obtain 
\[
\frac{\opnorm{\widetilde C_n}}{\Fnorm{\widetilde C_n}}\lesssim \frac{1}{\sqrt n}\to0,
\qquad
\frac{\opnorm{\widetilde D_n}}{\Fnorm{\widetilde D_n}}\lesssim \frac{\log n}{\sqrt n}\to0.
\]
\end{proof}

\subsection{Joint CLT the same as pure-fGn information matrix}

Define the linear transformation removing the deterministic linear term in $S_{H,n}$:
\begin{equation}\label{eq:HltHalf-M1}
M_n^{(1)}:=
\begin{pmatrix}
1 & 0\\
-\sigma\ln(\Delta_n) & 1
\end{pmatrix},
\qquad
M_n^{(1)}\binom{S_{\sigma,n}}{S_{H,n}}=\binom{S_{\sigma,n}}{R_{H,n}},
\end{equation}
cf.\ Lemma~\ref{lem:explicit-linear} and \eqref{eq:Hscore-decomp-eps}.

\begin{proposition}\label{prop:HltHalf-jointCLT}
Assume $0<H<\frac12$ and Lemma~\ref{lem:FH-fH}. Then, as $n\to\infty$,
\[
\frac{1}{\sqrt n}\,M_n^{(1)}\binom{S_{\sigma,n}}{S_{H,n}}
=
\binom{\frac{S_{\sigma,n}}{\sqrt n}}{\frac{R_{H,n}}{\sqrt n}}
\ \Rightarrow\
\mathcal N\!\left(0,\ I_{\mathrm{pure}}(\sigma,H)\right),
\]
where
\begin{equation}\label{eq:HltHalf-Ipure}
I_{\mathrm{pure}}(\sigma,H)=
\begin{pmatrix}
\frac{2}{\sigma^2} & \frac{1}{\sigma}\,T_1(H)\\[1mm]
\frac{1}{\sigma}\,T_1(H) & T_2(H)
\end{pmatrix},
\end{equation}
with $T_1(H),T_2(H)$ defined in \eqref{eq:HltHalf-T1T2-def}.
\end{proposition}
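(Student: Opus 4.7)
The proof follows the same $\opnorm{\cdot}/\Fnorm{\cdot}\to0$ strategy used throughout the paper, but is simpler in this regime because no orthogonalization step is needed: the kernels $\widetilde C_n$ and $\widetilde D_n$ already sit at the correct Frobenius scale $\sqrt{n}$. The plan is a straightforward application of the Cram\'er--Wold device. By Lemma~\ref{lem:score-eps-form}, for every $u=(u_1,u_2)\in\mathbb R^2$ bilinearity gives
\[
u_1\frac{S_{\sigma,n}}{\sqrt n}+u_2\frac{R_{H,n}}{\sqrt n}
=\frac{1}{\sqrt n}\Big(Z_n^\top M_{n,u} Z_n-\Tr(M_{n,u})\Big),
\qquad
M_{n,u}:=u_1\sigma\,\widetilde C_n+\tfrac{u_2\sigma^2}{2}\,\widetilde D_n,
\]
so it suffices to show that the right-hand side converges to $\mathcal N(0,u^\top I_{\mathrm{pure}}(\sigma,H)u)$ for each fixed $u$.

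To invoke Lemma~\ref{lem:QF-CLT} I would first verify $\opnorm{M_{n,u}}/\Fnorm{M_{n,u}}\to0$. The triangle inequality together with the bounds already established in the proof of Lemma~\ref{lem:HltHalf-opF} yields $\opnorm{M_{n,u}}=O(\log n)$, since $\opnorm{\widetilde C_n}\le\sigma^{-2}$ and $\opnorm{\widetilde D_n}=O(\log n)$. For the Frobenius norm,
\[
\Tr(M_{n,u}^2)=u_1^2\sigma^2\Tr(\widetilde C_n^2)
+u_1u_2\sigma^3\Tr(\widetilde C_n\widetilde D_n)
+\tfrac{u_2^2\sigma^4}{4}\Tr(\widetilde D_n^2),
\]
and inserting the trace asymptotics of Lemma~\ref{lem:HltHalf-TA} together with the integral limits of Lemma~\ref{lem:HltHalf-integral-limits} produces
\[
\frac{2}{n}\Tr(M_{n,u}^2)\longrightarrow
\frac{2u_1^2}{\sigma^2}+\frac{2u_1u_2\,T_1(H)}{\sigma}+u_2^2\,T_2(H)
=u^\top I_{\mathrm{pure}}(\sigma,H)\,u.
\]
Since $b_H=\partial_H\log f_H$ is non-constant (it inherits a logarithmic singularity at the origin from Lemma~\ref{lem:FH-fH}), the strict Cauchy--Schwarz inequality $T_1(H)^2<2T_2(H)$ makes $I_{\mathrm{pure}}$ positive definite, so for every $u\ne0$ one has $\Fnorm{M_{n,u}}\asymp\sqrt n$ and the ratio $\opnorm{M_{n,u}}/\Fnorm{M_{n,u}}=O(\log n/\sqrt n)\to0$.

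With the operator/Frobenius hypothesis verified, Lemma~\ref{lem:QF-CLT} immediately gives $Q_n(M_{n,u})/(\sqrt 2\,\Fnorm{M_{n,u}})\Rightarrow\mathcal N(0,1)$, and the variance identification above promotes this to $\tfrac{1}{\sqrt n}Q_n(M_{n,u})\Rightarrow\mathcal N(0,u^\top I_{\mathrm{pure}}u)$; the degenerate case $u=0$ is trivial. Cram\'er--Wold then closes the joint convergence. I do not expect a genuine obstacle here: the heavy lifting has been absorbed by the preceding lemmas of Appendix~\ref{app:HltHalf-eps}, whose $L^2$-type Toeplitz trace theorem and operator-norm control are precisely what make the pure-fGn-style $\sqrt n$ scaling work in this regime. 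The only mildly delicate point is the strict positivity of $I_{\mathrm{pure}}$, which is what guarantees that the Frobenius normalization never accidentally degenerates along a coordinate direction.
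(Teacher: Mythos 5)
Your proposal is correct and follows essentially the same route as the paper's proof: the Cram\'er--Wold device applied to the quadratic form $Z_n^\top M_{n,u}Z_n-\Tr(M_{n,u})$, with the operator/Frobenius ratio controlled via Lemma~\ref{lem:HltHalf-opF} and the limiting variance identified from the trace asymptotics of Lemmas~\ref{lem:HltHalf-TA} and \ref{lem:HltHalf-integral-limits}. Your explicit remark that positive definiteness of $I_{\mathrm{pure}}$ (via the strict Cauchy--Schwarz inequality $T_1(H)^2<2T_2(H)$, since $b_H$ is non-constant) is what keeps $\Fnorm{M_{n,u}}\asymp\sqrt{n}$ for every $u\neq0$ addresses a point the paper leaves implicit, and is a worthwhile refinement.
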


\begin{proof}
By Lemma~\ref{lem:score-eps-form},
\[
S_{\sigma,n}=\sigma\Big(Z_{n}^\top \widetilde C_n Z_{n}-\Tr(\widetilde C_n)\Big),
\qquad
R_{H,n}=\frac{\sigma^2}{2}\Big(Z_{n}^\top \widetilde D_n Z_{n}-\Tr(\widetilde D_n)\Big),
\qquad Z_{n}\sim \mathcal N(0,I_n).
\]

Using $\Var(Z_{n}^\top A Z_{n}-\Tr (A))=2\Tr(A^2)$ and Lemmas~\ref{lem:HltHalf-TA}--\ref{lem:HltHalf-integral-limits}, we obtain
\[
\Var\!\Big(\frac{S_{\sigma,n}}{\sqrt n}\Big)
=\frac{\sigma^2}{n}\cdot 2\Tr(\widetilde C_n^2)
=\frac{2\sigma^2}{n}\left(\frac{n}{2\pi}\int^{{\pi}}_{{-\pi}} \widetilde c_n{(\lambda)^2d \lambda} +o(n)\right)
\to \frac{2\sigma^2}{2\pi}\cdot \frac{2\pi}{\sigma^4}
=\frac{2}{\sigma^2},
\]
and
\[
\Var\!\Big(\frac{R_{H,n}}{\sqrt n}\Big)
=\frac{\sigma^4}{4n}\cdot 2\Tr(\widetilde D_n^2)
=\frac{\sigma^4}{2n}\left(\frac{n}{2\pi}\int^{{\pi}}_{{-\pi}} \widetilde d_n{(\lambda)^2d \lambda} +o(n)\right)
\to \frac{\sigma^4}{2\cdot 2\pi}\cdot \frac{4\pi}{\sigma^4}
= T_2(H),
\]
where the last equality uses \eqref{eq:HltHalf-int-d2} and \eqref{eq:HltHalf-T1T2-def}.

By Wick's identity,
\[
\Cov\!\Big(\frac{S_{\sigma,n}}{\sqrt n},\frac{R_{H,n}}{\sqrt n}\Big)
=
\frac{\sigma\cdot(\sigma^2/2)}{n}\cdot 2\,\Tr(\widetilde C_n\widetilde D_n)
=
\frac{\sigma^3}{n}\Tr(\widetilde C_n\widetilde D_n).
\]
Using \eqref{eq:HltHalf-TA-CD} and \eqref{eq:HltHalf-int-cd},
\[
\frac{1}{n}\Tr(\widetilde C_n\widetilde D_n)
=\frac{1}{2\pi}\int^{{\pi}}_{{-\pi}}  \widetilde c_n{{({{\lambda}})}}\widetilde d_n{{({{\lambda}})}} {d\lambda}+o(1)
\to \frac{1}{2\pi}\cdot \frac{2\pi}{\sigma^4}\,T_1(H)=\frac{T_1(H)}{\sigma^4},
\]
hence the covariance limit equals $\sigma^3\cdot\sigma^{-4}T_1(H)=\sigma^{-1}T_1(H)$.

For any fixed $u=(u_1,u_2)\in\R^2$,
\[
u_1\frac{S_{\sigma,n}}{\sqrt n}+u_2\frac{R_{H,n}}{\sqrt n}
=
\frac{1}{\sqrt n}\Big(Z_{n}^\top \widetilde M_{n,u}Z_{n}-\Tr(\widetilde M_{n,u})\Big),
\qquad
\widetilde M_{n,u}:=u_1\sigma\,\widetilde C_n+u_2\frac{\sigma^2}{2}\,\widetilde D_n.
\]
By Lemma~\ref{lem:HltHalf-opF}, we have $\opnorm{\widetilde M_{n,u}}/\Fnorm{\widetilde M_{n,u}}\to0$,
so Lemma~\ref{lem:QF-CLT} yields a one-dimensional normal limit for every $u$.
Therefore the vector converges jointly to a centered Gaussian with covariance matrix \eqref{eq:HltHalf-Ipure}.
\end{proof}

\begin{remark}[Consistency with the pure fGn experiment]\label{rem:HltHalf-purefGn}
The limiting covariance \eqref{eq:HltHalf-Ipure} coincides with the Fisher information of the pure fGn model in the regime $H<1/2$.
The Brownian component enters only through the vanishing regularization $\varepsilon_n I_n$ in $B_n=\sigma^2T_n(H)+\varepsilon_n I_n$.
\end{remark}

\end{document}